\title[The Moduli Space of Polynomial Maps and Their Holomorphic Indices: I.%
]{The Moduli Space of Polynomial Maps and Their Holomorphic Indices: I. Generic Properties in the case of Having Multiple Fixed Points}
\author{Toshi Sugiyama}
\thanks{This work was partially supported by JSPS KAKENHI Grant Number JP19K14557}
\date{\today}
\address{Mathematics Studies, Gifu Pharmaceutical University, Mitahora-Higashi 5-6-1, Gifu-city, Gifu 502-8585, Japan}
\email{sugiyama-to@gifu-pu.ac.jp, sugiyama.toshi@gmail.com}
\subjclass[2010]{Primary 37F45; Secondary 15A99, 14C17
}
\keywords{complex dynamics, moduli space, polynomial maps, holomorphic index, fixed-point multiplier, multiple fixed point, linear algebra, B\'{e}zout's theorem, intersection multiplicity}
\theoremstyle{plain}
\newtheorem*{maintheorem}{Main Theorem}
\newtheorem*{mcorollary}{Corollary}
\newtheorem{theorem}{Theorem}[section]
\newtheorem{lemma}[theorem]{Lemma}
\newtheorem{proposition}[theorem]{Proposition}
\theoremstyle{definition}
\newtheorem{remark}[theorem]{Remark}
\newtheorem{definition}[theorem]{Definition}
\numberwithin{equation}{section}
\begin{document}

\begin{abstract}
 Following the author's previous works,
 we continue to consider the problem of counting the number of affine conjugacy classes of polynomials of one complex variable 
 when its unordered collection of holomorphic fixed point indices is given.
 The problem was already solved completely in the case that the polynomials have no multiple fixed points,
 in the author's previous papers.
 In this paper, we consider the case of having multiple fixed points, and obtain the formulae 
 for generic unordered collections of holomorphic fixed point indices,
 for each given degree and for each given number of fixed points.
\end{abstract}

\maketitle

\section{Introduction}

This paper is a continuation of the author's previous works~\cite{sugi1} and~\cite{sugi2}.

We first remind our setting from~\cite{sugi1} and~\cite{sugi2}.
Let $\mathrm{MP}_d$ be the family of affine conjugacy classes of polynomial maps of one complex variable with degree $d \geq 2$,
and $\mathbb{C}^d / \mathfrak{S}_d$ the set of unordered collections of $d$ complex numbers, where ${\mathfrak S}_{d}$ denotes the $d$-th symmetric group.
We denote by $\Phi_d$ the map 
\[
 \Phi_d : \mathrm{MP}_d \to \widetilde{\Lambda}_d \subset \mathbb{C}^d / \mathfrak{S}_d
\]
which maps  each $f \in \mathrm{MP}_d$ to its unordered collection of fixed-point multipliers.
Here, fixed-point multipliers of $f \in \mathrm{MP}_d$ always satisfy certain relation by the fixed point theorem for polynomial maps (see Proposition~\ref{prop1.2}), which implies that
the image of $\Phi_d$ is contained in a certain hyperplane $\widetilde{\Lambda}_d$ in $\mathbb{C}^d / \mathfrak{S}_d$.
We also denote by $\mathrm{MC}_d$ the family of monic centered polynomials of one complex variable with degree $d \geq 2$, 
and by 
\[
 \widehat{\Phi}_d: \mathrm{MC}_d \to \widetilde{\Lambda}_d
\]
the composite mapping of the natural projection $\mathrm{MC}_d \to \mathrm{MP}_d$
and $\Phi_d$.
As mentioned in~\cite{sugi1}, 
it is important 
to find the number of elements of each fiber of the maps $\Phi_d$ and $\widehat{\Phi}_d$, 
in the study of algebraic properties of moduli of polynomial maps. 

Motivated by some concerning 
results~\cite{Mc},~\cite{mi_qua},~\cite{tani},~\cite{HutzTepper},~\cite{Gorbovickis1},~\cite{Gorbovickis2}
and 
following some preliminary works 
such as~\cite{mi_cub},~\cite{NishizawaFujimura},~\cite{Fujimura2} and~\cite{fu},
we obtained, in~\cite{sugi1} and~\cite{sugi2}, explicit formulae for finding 
the number of elements of each fiber $\Phi_d^{-1}(\bar{\lambda})$ 
and also the number of elements of each fiber $\widehat{\Phi}_d^{-1}(\bar{\lambda})$
for every $\bar{\lambda} = \{\lambda_1, \dots, \lambda_d \} \in \widetilde{\Lambda}_d$
except when $\lambda_i = 1$ for some $i$.
In this paper we proceed to the next step, in which we 
consider the case where $\lambda_i = 1$ for some $i$.
However in this case the situation 
is much complicated, compared with the case where $\lambda_i \ne 1$ for every $i$.
In particular, if $\#\{ i \mid 1\leq i \leq d,\ \lambda_i=1 \} \geq 4$, then
the fibers $\Phi_d^{-1}(\bar{\lambda})$ and $\widehat{\Phi}_d^{-1}(\bar{\lambda})$
can have dimension greater than or equal to one.
To overcome this difficulty, we consider holomorphic fixed point indices in place of fixed-point multipliers,
%
 and
modify the maps $\Phi_d, \widehat{\Phi}_d$ so that the target space of the modified maps is
the set of unordered collections of holomorphic fixed point indices.
Here, holomorphic fixed point index is in some sense a similar one to fixed-point multiplier, 
but gives more detailed information if the fixed point is multiple (see Definition~\ref{def1.1}).
Under this modification, we obtain, in this paper, {\it generic} properties 
for the number of elements of a fiber of the modified maps.


\vspace{15pt}

In the rest of this section, we express the above mentioned explicitly and state the main theorem
in this paper.

We first fix our notation, some of which are the same as 
in~\cite{sugi1} and~\cite{sugi2}.
For $d \ge 2$, we put
\begin{align*}
 \mathrm{Poly}_d &:= \left\{f \in \mathbb{C}[z] \bigm| \deg f = d  \right\}, \\
 \mathrm{MC}_d &:= \left\{\left. f(z) = z^d + \sum_{k=0}^{d-2}a_kz^k \ \right| \ a_k \in \mathbb{C} \ \ \text{for} \ \ 0 \leq k \leq d-2 \right\} \ \text{and}\\
 \mathrm{Aut}(\mathbb{C})
 &:= \left\{\gamma (z) = az+b \bigm| a,b \in \mathbb{C},\ a \ne 0 \right\}.
\end{align*}
Since $\gamma \in \mathrm{Aut}(\mathbb{C})$
naturally acts on $f \in \mathrm{Poly}_d$
by $\gamma \cdot f := \gamma \circ f \circ \gamma^{-1}$,
we can define its quotient 
\[
  \mathrm{MP}_d := \mathrm{Poly}_d / \mathrm{Aut}(\mathbb{C}),
\]
which we usually call the moduli space of polynomial maps
of degree $d$.
Here, the equivalence class of $f \in \mathrm{Poly}_d$ in $\mathrm{MP}_d$ is called 
the affine conjugacy class of $f$.
An element of $\mathrm{MC}_d$ is called a monic centered polynomial of degree $d$.
We clearly have $\mathrm{MC}_d \subset \mathrm{Poly}_d$, and denote by
\[
  p: \mathrm{MC}_d \to \mathrm{MP}_d
\]
the natural projection 
$\mathrm{MC}_d \subset \mathrm{Poly}_d \twoheadrightarrow 
 \mathrm{Poly}_d / \mathrm{Aut}(\mathbb{C}) = \mathrm{MP}_d$.
The map $p$ is surjective, and the action of the group
$\{ az \in \mathrm{Aut}(\mathbb{C}) \mid a \in \mathbb{C},  a^{d-1} = 1 \} \cong \mathbb{Z} /(d-1)\mathbb{Z}$
on $\mathrm{MC}_d$ induces the isomorphism
$\overline{p} : \mathrm{MC}_d / \left( \mathbb{Z} /(d-1)\mathbb{Z} \right) \cong \mathrm{MP}_d$.

We put 
\[
  \mathrm{Fix}(f) := \{ z \in \mathbb{C} \bigm| f(z)=z\}
\]
for $f \in \mathrm{Poly}_d$, 
where $\mathrm{Fix}(f)$ is {\it not} considered counted with multiplicity in this paper.

\begin{definition}\label{def1.1}
 For $f \in \mathrm{Poly}_d$ and $\zeta \in \mathrm{Fix}(f)$,
	\begin{enumerate}
	 \item the derivative $f'(\zeta)$ is called the multiplier of $f$ at a fixed point $\zeta$.
	 \item We put 
		\[
		  \iota\left( f, \zeta \right) := 
		  \frac{1}{2\pi \sqrt{-1}}\oint_{\left| z-\zeta \right|=\epsilon} \frac{dz}{z-f(z)}
		\]
		for sufficiently small $\epsilon>0$.
		The residue $\iota\left( f, \zeta \right)$ is called the holomorphic index of $f$ 
		at a fixed point $\zeta$.
	\end{enumerate}
\end{definition}
Note that a fixed point $\zeta \in \mathrm{Fix}(f)$ is multiple 
if and only if $f'(\zeta) = 1$.
Moreover if $\zeta \in \mathrm{Fix}(f)$ is not multiple, then we always have 
$\iota\left( f, \zeta \right) = \frac{1}{1 - f'(\zeta)}$ 
by residue theorem.
It is also well known that the holomorphic index $\iota\left( f, \zeta \right)$ is invariant 
under the change of holomorphic coordinates
even when $f'(\zeta) = 1$.
Hence holomorphic index is a very similar object to multiplier.
In particular, in the case $f'(\zeta) \ne 1$, these two give the equivalent information;
however if $f'(\zeta) = 1$, holomorphic index gives more detailed information than multiplier
 (see section~12 in~\cite{mi_book} for instance).

\begin{proposition}[Fixed Point Theorem]\label{prop1.2}
 For $f \in \mathrm{Poly}_d$ we have 
 \[
   \sum_{\zeta \in \mathrm{Fix}(f)} \iota\left( f, \zeta \right) = 0.
 \]
\end{proposition}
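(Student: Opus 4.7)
The plan is to interpret $\iota(f,\zeta)$ as the residue at $\zeta$ of the meromorphic function $\frac{1}{z-f(z)}$, and then apply the residue theorem together with an estimate at infinity. The key observation is that since $\deg f = d \geq 2$, the polynomial $g(z) := z - f(z)$ has degree $d$, and its zero set is exactly $\mathrm{Fix}(f)$. Moreover, the integrand $\frac{1}{z-f(z)} = \frac{1}{g(z)}$ is holomorphic on $\mathbb{C} \setminus \mathrm{Fix}(f)$, and at each $\zeta \in \mathrm{Fix}(f)$ the definition of the holomorphic index is precisely the residue of $\frac{1}{g(z)}$ at $\zeta$.

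The first step is therefore to choose $R>0$ large enough so that the open disk $\{|z|<R\}$ contains every point of $\mathrm{Fix}(f)$ (possible since $\mathrm{Fix}(f)$ is finite). Then by the residue theorem applied to the finitely many poles of $\frac{1}{g(z)}$ inside $\{|z|<R\}$, we have
\[
\sum_{\zeta \in \mathrm{Fix}(f)} \iota(f,\zeta) \;=\; \frac{1}{2\pi\sqrt{-1}} \oint_{|z|=R} \frac{dz}{z-f(z)}.
\]
The second step is to estimate the right-hand side as $R \to \infty$. Since $g(z) = z - f(z)$ is a polynomial of degree $d \geq 2$ with leading coefficient $-\mathrm{(lead\,coef\,of\,}f)\ne 0$, there exists a constant $c>0$ and $R_0>0$ such that $|z-f(z)| \geq c|z|^d$ for all $|z| \geq R_0$. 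Consequently, for $R \geq R_0$,
\[
\left| \oint_{|z|=R} \frac{dz}{z-f(z)} \right| \;\leq\; \frac{2\pi R}{c R^d} \;=\; \frac{2\pi}{c R^{d-1}},
\]
which tends to $0$ as $R \to \infty$ since $d-1 \geq 1$.

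Because the left-hand side of the residue-theorem identity does not depend on $R$ (once $R$ is large enough to enclose all fixed points), taking the limit $R \to \infty$ forces it to vanish, giving the asserted identity. There is no real obstacle here; the only thing to be careful about is verifying the degree bookkeeping, namely that $z - f(z)$ genuinely has degree $d$ and not less, which is automatic from $d\ge 2$, and that the residue interpretation of $\iota(f,\zeta)$ is exactly the one furnished by Definition~\ref{def1.1} so no multiplicity issues arise at multiple fixed points.
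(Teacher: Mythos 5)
Your proof is correct and takes essentially the same route the paper sketches: express the sum of holomorphic indices as $\frac{1}{2\pi\sqrt{-1}}\oint_{|z|=R}\frac{dz}{z-f(z)}$ by the residue theorem, then show this integral vanishes because $\deg(z-f(z))=d\ge 2$. You simply spell out the decay estimate at infinity that the paper leaves implicit.
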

Proposition~\ref{prop1.2} can easily be seen by the integration 
$\frac{1}{2\pi\sqrt{-1}} \oint_{|z|=R} \frac{dz}{z- f(z)}$
for sufficiently large real number $R$.

\vspace{5pt}

Every $f(z) \in \mathrm{Poly}_d$ can be expressed in the form
\[
   f(z) = z + \rho (z - \zeta_1)^{d_1} \cdots  (z - \zeta_{\ell})^{d_{\ell}},
\]
where $d_1,\dots,d_{\ell}$ are positive integers with $d_1 + \dots + d_{\ell} = d$, 
$\rho$ is a non-zero complex number, and
$\zeta_1, \dots, \zeta_{\ell}$ are mutually distinct complex numbers.
In this expression, we have $\#\mathrm{Fix}(f) = \ell$ and $\mathrm{Fix}(f) = \left\{ \zeta_1, \dots, \zeta_{\ell} \right\}$.
For such $f(z)$, we put
\[
   \mathrm{mult}(f,\zeta_i) := d_i 
\]
for $1 \leq i \leq \ell$, which we usually call the fixed-point multiplicity of $f$ at $\zeta_i$.

\begin{definition}
 For $d\geq 2$, we put
	\[
	  {\rm Mult}_d := \left\{ (d_1,\dots,d_\ell) \ \left| \ 
		\begin{matrix}
		\ell \geq 1,\quad d_1,\dots,d_\ell \in \mathbb{N}, \\
		d_1 + \dots + d_{\ell} = d,\\
		1 \leq d_1 \leq \dots \leq d_\ell
		\end{matrix} \right. \right\}.
	\]
 Moreover for each $(d_1,\dots,d_\ell) \in {\rm Mult}_d$, we put
	\begin{align*}
 	 {\rm Poly}_d(d_1,\dots,d_\ell) 
		&:= \left\{
		z + \rho(z - \zeta_1)^{d_1}\cdots (z - \zeta_\ell)^{d_\ell} \ \left| \ \begin{matrix}
		\rho, \zeta_1, \dots, \zeta_{\ell} \in \mathbb{C},\quad \rho \ne 0, \\
		\zeta_1,\dots,\zeta_{\ell} \text{ are mutually distinct}
		\end{matrix}\right.\right\},\\
	 {\rm MP}_d(d_1,\dots,d_\ell) 
		&:= {\rm Poly}_d(d_1,\dots,d_\ell) / \mathrm{Aut}(\mathbb{C}),\\
	 {\rm MC}_d(d_1,\dots,d_\ell) 
		&:= {\rm Poly}_d(d_1,\dots,d_\ell) \cap {\rm MC}_d \qquad \text{and}\\
	\Lambda_d(d_1,\dots,d_\ell) 
		&:= \left\{ 
		\left\{ \left(d_1,m_1\right), \dots, \left(d_\ell,m_\ell\right) \right\} 
		\ \left| \ \begin{matrix}
			m_1,\dots, m_\ell \in \mathbb{C}, \\
			m_1 + \dots + m_\ell = 0
		\end{matrix}\right.\right\},
	\end{align*}
 where $\left\{ \left(d_1,m_1\right), \dots, \left(d_\ell,m_\ell\right) \right\} $
 denotes the unordered collection of the pairs 
 $\left(d_1,m_1\right), \dots$, $\left(d_\ell,m_\ell\right)$.
\end{definition}
We naturally have the following stratifications indexed by the set of fixed-point multiplicities ${\rm Mult}_d$:
 \begin{align*}
  {\rm Poly}_d &=  \coprod_{(d_1,\dots,d_\ell) \in {\rm Mult}_d} {\rm Poly}_d(d_1,\dots,d_\ell),\\
  {\rm MP}_d &=  \coprod_{(d_1,\dots,d_\ell) \in {\rm Mult}_d} {\rm MP}_d(d_1,\dots,d_\ell) \qquad \text{and}\\
  {\rm MC}_d &=  \coprod_{(d_1,\dots,d_\ell) \in {\rm Mult}_d} {\rm MC}_d(d_1,\dots,d_\ell),
 \end{align*}
where $\coprod$ denotes the disjoint union.
On each strata ${\rm MP}_d(d_1,\dots,d_\ell)$ and ${\rm MC}_d(d_1,\dots,d_\ell)$, 
we naturally have the maps
 \begin{align*}
    \Phi_d(d_1,\dots,d_\ell) :\ &{\rm MP}_d(d_1,\dots,d_\ell) \to \Lambda_d(d_1,\dots,d_\ell) \qquad \text{and}\\
    \widehat{\Phi}_d(d_1,\dots,d_\ell) :\ &{\rm MC}_d(d_1,\dots,d_\ell) \to \Lambda_d(d_1,\dots,d_\ell)
 \end{align*}
 by $f \mapsto \left\{ \left( {\rm mult}(f,\zeta), \iota(f,\zeta) \right) \mid
 \zeta \in {\rm Fix}(f) \right\}$, 
by Proposition~\ref{prop1.2}.

\begin{remark}
 For every $(d_1,\dots,d_\ell) \in {\rm Mult}_d$, we have
 \[
   \dim_{\mathbb{C}}{\rm MP}_d(d_1,\dots,d_\ell) = \dim_{\mathbb{C}}{\rm MC}_d(d_1,\dots,d_\ell)
   = \dim_{\mathbb{C}}\Lambda_d(d_1,\dots,d_\ell) = \ell - 1.
 \]
\end{remark}

\begin{remark}
 The maps $\Phi_d(1,\dots,1)
$ and
 $\widehat{\Phi}_d(1,\dots,1)
$
 are essentially the same as the maps $\Phi_d : \mathrm{MP}_d \to \widetilde{\Lambda}_d$
 and $\widehat{\Phi}_d: \mathrm{MC}_d \to \widetilde{\Lambda}_d$
 which were mainly considered in~\cite{sugi1} and~\cite{sugi2}
 if restricted on 
 $\widetilde{V}_d := \left\{\left. \bar{\lambda} = \{\lambda_1, \dots, \lambda_d \} \in \widetilde{\Lambda}_d \ \right|
 \lambda_i \ne 1 \text{ for every } i \right\}$,
 because they are the same under
 the correspondence $\widetilde{V}_d \ni \left\{ \lambda_1,\dots,\lambda_d \right\} \mapsto 
  \left\{ \left( 1, \frac{1}{1-\lambda_1} \right), \dots, \left( 1, \frac{1}{1-\lambda_d} \right) \right\} \in \Lambda_d(1,\dots,1)$.
 Hence in~\cite{sugi1} and~\cite{sugi2}, we already have the formulae for finding
 $\# \Phi_d(1,\dots,1)^{-1}(\overline{m})$ and $\# \widehat{\Phi}_d(1,\dots,1)^{-1}(\overline{m})$
 for every $\overline{m} \in \Lambda_d(1,\dots,1)$.
 In this paper we consider\\
 $\# \Phi_d(d_1,\dots,d_\ell)^{-1}(\overline{m})$ and $\# \widehat{\Phi}_d(d_1,\dots,d_\ell)^{-1}(\overline{m})$
 for every $(d_1,\dots,d_\ell) \in {\rm Mult}_d$ and for generic $\overline{m} \in \Lambda_d(d_1,\dots,d_\ell)$.
\end{remark}

We now state the main theorem in this paper.

\begin{maintheorem}
 Let $(d_1,\dots,d_\ell)$ be an element of ${\rm Mult}_d$ with $\ell \geq 2$. Then
 \begin{enumerate}
  \item for every $\overline{m} = \left\{ \left(d_1,m_1\right), \dots, \left(d_\ell,m_\ell\right) \right\}
	\in \Lambda_d(d_1,\dots,d_\ell)$, we have 
	\[
	   \# \Phi_d(d_1,\dots,d_\ell)^{-1}(\overline{m}) \leq \dfrac{(d-2)!}{(d-\ell)!} \quad \text{and} \quad
	   \# \widehat{\Phi}_d(d_1,\dots,d_\ell)^{-1}(\overline{m}) \leq \dfrac{(d-1)!}{(d-\ell)!}.
	\]
  \item For $\overline{m} = \left\{ \left(d_1,m_1\right), \dots, \left(d_\ell,m_\ell\right) \right\}
	\in \Lambda_d(d_1,\dots,d_\ell)$, 
	the implications $(c) \Leftrightarrow (b) \Rightarrow (a)$ always hold
	for the following three conditions:
	\begin{enumerate}
	 \item $\displaystyle \# \Phi_d(d_1,\dots,d_\ell)^{-1}(\overline{m}) = \dfrac{(d-2)!}{(d-\ell)!}$
	 \item $\displaystyle \# \widehat{\Phi}_d(d_1,\dots,d_\ell)^{-1}(\overline{m}) = \dfrac{(d-1)!}{(d-\ell)!}$
	 \item $\left(d_1,m_1\right), \dots, \left(d_\ell,m_\ell\right)$ are mutually distinct, and 
		$\sum_{i \in I}m_i \ne 0$ holds for every $\emptyset \ne I \subsetneq \{1,\dots,\ell\}$.
		\label{conditionM2c}
	\end{enumerate}
	Moreover the implication $(a) \Rightarrow (c)$ also holds except in the case $d = \ell = 3$.
 \end{enumerate}
\end{maintheorem}

\begin{remark}
 As mentioned in the first page in~\cite{sugi1}, the map $\Phi_3 : \mathrm{MP}_3 \to \widetilde{\Lambda}_3$ is bijective.
 Hence in the case $d=\ell = 3$, the equality $\# \Phi_3(1,1,1)^{-1}(\overline{m}) = 1$
 holds for 
 $\overline{m} = \left\{ \left(1,m_1\right), \left(1,m_2\right), \left(1,m_3\right) \right\}
 \in \Lambda_3(1,1,1)$
 if and only if $m_i \ne 0$ holds for every $1 \leq i \leq 3$.
 Hence in this case, the implication $(a) \Rightarrow (c)$ does not hold
 since $\left(1,m_1\right), \left(1,m_2\right), \left(1,m_3\right)$ are not always mutually distinct.
\end{remark}

The set of $\overline{m} \in \Lambda_d(d_1,\dots,d_\ell)$ satisfying
the condition~(\ref{conditionM2c}) in Main Theorem is Zariski open in $\Lambda_d(d_1,\dots,d_\ell)$; 
hence we have the following:

\begin{mcorollary}
 Let $(d_1,\dots,d_\ell)$ be an element of ${\rm Mult}_d$ with $\ell \geq 2$. Then
 \begin{enumerate}
  \item $\Phi_d(d_1,\dots,d_\ell)$ is generically a $\frac{(d-2)!}{(d-\ell)!}$-to-one map.
  \item $\widehat{\Phi}_d(d_1,\dots,d_\ell)$ is generically a $\frac{(d-1)!}{(d-\ell)!}$-to-one map.
 \end{enumerate}
\end{mcorollary}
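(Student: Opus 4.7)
The plan is to reduce the Corollary to the Main Theorem by showing that the set of $\overline{m}$ satisfying condition (c) of the Main Theorem is Zariski open and dense in $\Lambda_d(d_1,\dots,d_\ell)$; once this is done, parts (1) and (2) of the Corollary follow immediately from the implications $(c) \Rightarrow (a)$ and $(c) \Leftrightarrow (b)$ in part (2) of the Main Theorem, which on the open set in question force the fibers to have the stated maximal cardinalities.

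First I would identify $\Lambda_d(d_1,\dots,d_\ell)$ as an irreducible affine variety. Fixing the ordering $d_1 \leq \dots \leq d_\ell$, this set is the quotient $H/G$, where $H := \{(m_1,\dots,m_\ell) \in \mathbb{C}^\ell \mid m_1 + \dots + m_\ell = 0\} \cong \mathbb{C}^{\ell-1}$ and $G \subseteq \mathfrak{S}_\ell$ is the finite subgroup of permutations that only exchange indices $i,j$ with $d_i = d_j$. Since $H$ is an irreducible affine space and $G$ is finite, $\Lambda_d(d_1,\dots,d_\ell) = H/G$ is again an irreducible affine variety of dimension $\ell - 1$.

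Next I would check that condition (c) cuts out a Zariski open subset $U$ of $\Lambda_d(d_1,\dots,d_\ell)$. Pulled back to $H$, condition (c) amounts to the simultaneous non-vanishing of the finite family of linear forms $m_i - m_j$ (over pairs $i \ne j$ with $d_i = d_j$) together with $\sum_{i \in I} m_i$ (over non-trivial proper subsets $I \subsetneq \{1,\dots,\ell\}$). Both families of linear forms are $G$-invariant, so their joint non-vanishing locus in $H$ is a $G$-invariant Zariski open subset that descends to a Zariski open subset $U \subseteq \Lambda_d(d_1,\dots,d_\ell)$.

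To close the argument, I would verify that $U$ is non-empty, which is immediate since $\ell \geq 2$ means $H$ has positive dimension and one can pick $(m_1,\dots,m_\ell) \in H$ with all coordinates distinct and no proper subsum vanishing — for instance, any sufficiently generic choice summing to zero. Because $\Lambda_d(d_1,\dots,d_\ell)$ is irreducible, this non-empty Zariski open subset is automatically dense. Hence for every $\overline{m}$ in the dense open set $U$ we have $\#\Phi_d(d_1,\dots,d_\ell)^{-1}(\overline{m}) = \frac{(d-2)!}{(d-\ell)!}$ and $\#\widehat{\Phi}_d(d_1,\dots,d_\ell)^{-1}(\overline{m}) = \frac{(d-1)!}{(d-\ell)!}$, which is exactly the statement of being generically a $k$-to-one map in the two parts of the Corollary. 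There is no real obstacle here; the only minor point is ensuring $G$-invariance of the defining linear forms and irreducibility of the quotient, both of which are elementary.
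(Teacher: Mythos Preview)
Your proposal is correct and follows exactly the approach the paper takes: the paper simply remarks that the set of $\overline{m}$ satisfying condition~(c) is Zariski open in $\Lambda_d(d_1,\dots,d_\ell)$ and deduces the Corollary from the Main Theorem. Your write-up in fact supplies more detail than the paper (the description of $\Lambda_d(d_1,\dots,d_\ell)$ as $H/G$, the $G$-invariance of the families of linear forms, and the non-emptiness check), all of which is sound.
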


\begin{remark}
 In the case $\ell = 1$, we have $(d_1,\dots,d_\ell) = (d)$ and also have
 $\# {\rm MP}_d(d) = \# {\rm MC}_d(d) = \# \Lambda_d(d) = 1$.
 Hence in this case, the maps $\Phi_d(d)$ and $\widehat{\Phi}_d(d)$ are trivially bijective.
\end{remark}


We have three sections in this paper.
The most frequently used tool for the proof of Main Theorem is linear algebra, especially Proposition~\ref{prop2.4}.
Section~\ref{section2} is devoted to introduce some results in linear algebra including Proposition~\ref{prop2.4}.
On the other hand, 
the proof of  Main Theorem itself is given in Section~\ref{section3}.
Most steps in the proof of Main Theorem 
are analogies
of the proofs of the main theorems in~\cite{sugi1}; 
however in almost all the steps, its proof is much complicated, compared with the original one in~\cite{sugi1}.
Moreover in~\cite{sugi1}, there does not exist a counterpart for Proposition~\ref{prop3.5},
which is the most crucial part in the proof of Main Theorem from the standpoint of technique.


\section{preparation from linear algebra}\label{section2}


In this section, we remind our notations and propositions in linear algebra
which were given in the latter half of Section~7 in~\cite{sugi1}.
These are also used very often in this paper throughout Section~\ref{section3} in the proof of Main Theorem.

\begin{definition}
 For non-negative integers $n,b,k,h$ with $n>k$ and $b>h$,
 we denote by $A_{n,k}^{b,h}(\alpha)$ the $(n-k,b-h)$ matrix
 whose $(i,j)$-th entry is $\binom{i+k-1}{j+h-1} \alpha^{(i+k)-(j+h)}$
 for $1 \leq i \leq n-k$ and $1 \leq j \leq b-h$.
 Moreover we put $A_{n,k}^b(\alpha):=A_{n,k}^{b,0}(\alpha)$ and
 $A_n^b(\alpha):=A_{n,0}^{b,0}(\alpha)$.
\end{definition}

By definition,
the matrix $A_{n,k}^{b,h}(\alpha)$ is obtained from the $(n,b)$ matrix
\[
 A_n^b(\alpha)
  = \begin{pmatrix}
      1        & 0         & 0         & 0       & \cdots & 0      \\
      \alpha   & 1         & 0         & 0       & \cdots & 0      \\
      \alpha^2 & 2\alpha   & 1         & 0       & \cdots & 0      \\
      \alpha^3 & 3\alpha^2 & 3\alpha   & 1       & \cdots & 0      \\
      \vdots   & \vdots    & \vdots    & \vdots  & \ddots & \vdots \\
      \alpha^{n-1} & (n-1)\alpha^{n-2} & \binom{n-1}{2}\alpha^{n-3} &
        \binom{n-1}{3}\alpha^{n-4} & \cdots & 
     \end{pmatrix}
\]
by cutting off the upper $k$ rows and the left $h$ columns.

\begin{definition}
 For a positive integer $b$,
 we denote by $X_b$ the $(b,b)$ diagonal matrix
 whose $(i,i)$-th entry is $i$ for $1 \le i \le b$.
 Moreover we denote by $I_b$ the $(b,b)$ identity matrix,
 and by $N_b$ the $(b,b)$ nilpotent matrix
 whose $(i,i+1)$-st entry is $1$ for $1 \le i \le b-1$
 and whose other entries are $0$,
 i.e.,
 \[
  X_b = \begin{pmatrix}
	 1      & 0      & \cdots & 0 \\
	 0      & 2      & \cdots & 0 \\
	 \vdots & \vdots & \ddots & \vdots \\
	 0      & 0      & \cdots & b
	\end{pmatrix}, \quad
  I_b=  \begin{pmatrix}
	 1 & 0 & \cdots & 0 \\
	 0 & 1 & \cdots & 0 \\
	 \vdots & \vdots & \ddots & \vdots \\
	 0 & 0 & \cdots & 1
	\end{pmatrix}
  \quad \text{and} \quad
  N_b = \begin{pmatrix}
	 0      & 1      & 0      & \cdots & 0 \\
	 0      & 0      & 1      & \cdots & 0 \\
	 \vdots & \vdots & \vdots & \ddots & \vdots \\
	 0      & 0      & 0      & \cdots & 1 \\
	 0      & 0      & 0      & \cdots & 0
	\end{pmatrix}.
 \]
\end{definition}

\begin{proposition}\label{prop2.3}
 For positive integers $n$ and $b$, we have $A_{n+1,1}^{b+1,1}(\alpha) = X_n \cdot A_n^b(\alpha) \cdot {X_b}^{-1}$.
\end{proposition}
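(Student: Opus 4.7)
The plan is to verify the matrix identity entry-wise by direct computation, using a standard binomial coefficient identity. Both sides are $(n,b)$-matrices, so the identity reduces to checking agreement of the $(i,j)$-th entries for $1 \le i \le n$ and $1 \le j \le b$.

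First I would read off the $(i,j)$-th entry of the left-hand side from the definition: since $A_{n+1,1}^{b+1,1}(\alpha)$ has $(i,j)$-th entry $\binom{i+1-1}{j+1-1}\alpha^{(i+1)-(j+1)}$, this is simply
\[
 \bigl(A_{n+1,1}^{b+1,1}(\alpha)\bigr)_{i,j} = \binom{i}{j}\alpha^{i-j}.
\]
Next I would compute the $(i,j)$-th entry of the right-hand side. Since $X_n$ is diagonal with diagonal entries $1,2,\dots,n$ and $X_b^{-1}$ is diagonal with diagonal entries $1,\frac{1}{2},\dots,\frac{1}{b}$, left-multiplication by $X_n$ scales the $i$-th row by $i$, and right-multiplication by $X_b^{-1}$ scales the $j$-th column by $1/j$. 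Hence
\[
 \bigl(X_n \cdot A_n^b(\alpha) \cdot X_b^{-1}\bigr)_{i,j}
 = \frac{i}{j}\binom{i-1}{j-1}\alpha^{i-j}.
\]

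The only remaining point is the well-known identity $\binom{i}{j} = \frac{i}{j}\binom{i-1}{j-1}$, valid for $1 \le j \le i$, together with the observation that both sides vanish when $j > i$ (the left because $\binom{i}{j}=0$, the right because $\binom{i-1}{j-1}=0$ for $j-1 > i-1$). Plugging this identity into the expression above immediately yields equality of the two matrices.

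There is no real obstacle here: the entire argument is a one-line binomial identity combined with the interpretation of multiplication by the diagonal matrices $X_n$ and $X_b^{-1}$ as row- and column-scaling. The statement is essentially a bookkeeping lemma that will be used repeatedly in Section~\ref{section3} to convert between the matrices $A_n^b(\alpha)$ and their shifted-index variants $A_{n+1,1}^{b+1,1}(\alpha)$.
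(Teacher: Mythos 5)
Your proof is correct and is essentially the same as the paper's: the paper simply states that the identity follows from $\binom{i}{j}=\binom{i-1}{j-1}\cdot\frac{i}{j}$, which is exactly the entry-wise verification you spell out.
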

\begin{proof}
 This can easily be verified by $\binom{i}{j}=\binom{i-1}{j-1}\cdot\frac{i}{j}$.
\end{proof}

\begin{proposition}\label{prop2.4}
 Let $r_1, \dots, r_{\ell}, r$ be positive integers with $r = r_1 + \dots + r_{\ell}$.
 Then we have 
 \begin{align*}
     \det \bigl( A_r^{r_1}(\alpha_1), \dots, A_r^{r_\ell}(\alpha_\ell) \bigr) 
	&= \prod_{1 \leq v < u \leq \ell} \left( \alpha_u - \alpha_v \right)^{r_v r_u} \qquad \text{and}\\
     \det \left( A_{r+1, 1}^{r_1+1,1}(\alpha_1), \dots, A_{r+1,1}^{r_\ell+1,1}(\alpha_\ell) \right) 
	&= \frac{r!}{r_1!\cdots r_\ell!}\cdot
	     \prod_{1 \leq v < u \leq \ell} \left( \alpha_u - \alpha_v \right)^{r_v r_u}.
 \end{align*}
\end{proposition}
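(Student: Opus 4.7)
The plan is to deduce the second identity from the first via Proposition~\ref{prop2.3}, and to prove the first by a polynomial degree-and-divisibility argument. For the reduction, Proposition~\ref{prop2.3} gives $A_{r+1,1}^{r_u+1,1}(\alpha_u) = X_r\, A_r^{r_u}(\alpha_u)\, X_{r_u}^{-1}$ for each $u$, so block concatenation yields
\[
 \bigl(A_{r+1,1}^{r_1+1,1}(\alpha_1), \dots, A_{r+1,1}^{r_\ell+1,1}(\alpha_\ell)\bigr)
  = X_r \cdot \bigl(A_r^{r_1}(\alpha_1), \dots, A_r^{r_\ell}(\alpha_\ell)\bigr) \cdot \mathrm{diag}\bigl(X_{r_1}^{-1}, \dots, X_{r_\ell}^{-1}\bigr).
\]
Taking determinants and using $\det X_n = n!$ produces the factor $r!/(r_1!\cdots r_\ell!)$ relating the two identities.

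For the first identity, set $D := \det\bigl(A_r^{r_1}(\alpha_1), \dots, A_r^{r_\ell}(\alpha_\ell)\bigr)$. By Leibniz expansion, each nonzero term is a monomial in $(\alpha_1, \dots, \alpha_\ell)$ of total degree $\binom{r+1}{2} - \sum_u \binom{r_u+1}{2} = \sum_{u<v} r_u r_v$, so $D$ is a homogeneous polynomial of this degree, matching that of the target product.

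To show $(\alpha_v - \alpha_u)^{r_u r_v}$ divides $D$ for each pair $u<v$, fix the other variables and let $C_s(\alpha_v)$ denote the $s$-th column of block $v$. Direct differentiation gives
\[
 \left.\frac{\partial^{m} C_s}{\partial \alpha_v^{m}}\right|_{\alpha_v = \alpha_u}
  = m!\,\binom{s+m-1}{s-1}\cdot(\text{the $(s+m)$-th column of } A_r^r(\alpha_u)),
\]
with the convention that this vanishes when $s+m > r$. Applying $\partial^k/\partial \alpha_v^k$ to $D$ and expanding multilinearly over the $r_v$ columns of block $v$, a term corresponding to exponents $(m_1, \dots, m_{r_v})$ with $\sum_s m_s = k$ vanishes unless the indices $\{s+m_s\}_{s=1}^{r_v}$ are mutually distinct and contained in $\{r_u+1, \dots, r\}$ (otherwise two columns of the differentiated matrix coincide, or one of them equals a column of block $u$). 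Minimising $\sum_s m_s = \sum_s (s+m_s) - \binom{r_v+1}{2}$ under these constraints gives exactly $r_u r_v$, achieved when $\{s+m_s\} = \{r_u+1, \dots, r_u + r_v\}$; hence $D$ vanishes to order at least $r_u r_v$ at $\alpha_v = \alpha_u$.

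Since the factors $\alpha_v - \alpha_u$ are pairwise coprime, their claimed powers multiply to divide $D$, and the degree match forces $D = C \cdot \prod_{u<v}(\alpha_v - \alpha_u)^{r_u r_v}$ for some constant $C$. I would fix $C=1$ by induction on $\ell$: the base case $\ell = 1$ gives $D = \det A_r^r(\alpha_1) = 1$, since $A_r^r(\alpha_1)$ is lower unitriangular, and the inductive step is obtained by comparing leading coefficients in $\alpha_\ell$ on both sides. The main obstacle is the combinatorial minimisation $\min \sum_s m_s = r_u r_v$ in the divisibility step; once that is in place the remaining pieces are routine linear algebra.
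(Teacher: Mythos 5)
Your deduction of the second identity from the first via Proposition~\ref{prop2.3} --- factoring the blocked matrix as $X_r \cdot \bigl(A_r^{r_1}(\alpha_1),\dots,A_r^{r_\ell}(\alpha_\ell)\bigr)\cdot\mathrm{diag}\bigl(X_{r_1}^{-1},\dots,X_{r_\ell}^{-1}\bigr)$ and taking determinants --- is exactly the reduction the paper indicates. For the first identity the paper merely cites Lemma~7.8 of~\cite{sugi1} without reproducing an argument, so there is nothing in this document to compare against directly; your degree-and-divisibility argument is the classical confluent-Vandermonde approach and is sound in outline. The degree count $\binom{r+1}{2} - \sum_u \binom{r_u+1}{2} = \sum_{u<v} r_u r_v$ is right, the derivative formula for $C_s$ checks out, and the minimisation showing that the nonvanishing terms of $\partial^k D/\partial\alpha_v^k\big|_{\alpha_v=\alpha_u}$ force $k\ge r_u r_v$ is correct, so divisibility by each $(\alpha_v-\alpha_u)^{r_u r_v}$ (and hence by their product, since these are pairwise coprime irreducibles in the UFD $\mathbb{C}[\alpha_1,\dots,\alpha_\ell]$) follows. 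One spot is more substantive than you allow: in the inductive step fixing $C=1$, Laplace expansion along the last $r_\ell$ columns shows that the coefficient of $\alpha_\ell^{r_\ell(r-r_\ell)}$ in $D$ is the product of $\det\bigl(A_{r-r_\ell}^{r_1}(\alpha_1),\dots,A_{r-r_\ell}^{r_{\ell-1}}(\alpha_{\ell-1})\bigr)$, handled by induction, with the shifted Pascal determinant $\det\bigl(\binom{r-r_\ell+p-1}{q-1}\bigr)_{1\le p,q\le r_\ell}$, and you need this last determinant to equal $1$. That is true --- the row operations $R_p \mapsto R_p - R_{p-1}$ for $p=r_\ell,\dots,2$ make the first column $(1,0,\dots,0)^{t}$ and reduce the claim to size $r_\ell-1$ --- but it is not quite ``routine linear algebra'' as stated, and without it the inductive step does not close. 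With that verification supplied, the argument is complete.
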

\begin{proof}
 See the proof of Lemma~7.8 in~\cite{sugi1}.
 Note that the latter equality is a direct consequence of the former one and Proposition~\ref{prop2.3}.
\end{proof}

\section{proof}\label{section3}

In this section, we prove Main Theorem by using the propositions in Section~\ref{section2}.
We always assume the following throughout this section:
\begin{itemize}
 \item $\ell$ and $d$ are integers greater than or equal to $2$.
 \item $(d_1,\dots,d_\ell)$ is an element of ${\rm Mult}_d$.
 \item $\rho, \zeta_1, \dots, \zeta_{\ell}$ are complex numbers with $\rho \ne 0$.
 \item $f(z) = z + \rho \left( z - \zeta_1 \right)^{d_1} \cdots \left( z - \zeta_{\ell} \right)^{d_{\ell}}$.
 	Hence $f(z) \in {\rm Poly}_d(d_1,\dots,d_{\ell})$ holds if and only if $\zeta_1, \dots, \zeta_{\ell}$ are mutually distinct.
 \item $m= (m_1,\dots, m_{\ell})$ is an element of $\mathbb{C}^{\ell}$ with $m_1 + \dots + m_{\ell} = 0$.
 	Moreover for such $m$, we put $\overline{m} := \left\{ \left(d_1,m_1\right), \dots, \left(d_\ell,m_\ell\right) \right\}$.
 	Hence we always have $\overline{m} \in \Lambda_d(d_1,\dots,d_{\ell})$.
 \item $m$ is assumed not to be equal to $(0,\dots,0)$, except in Propositions~\ref{prop3.1},~\ref{prop3.2} and~\ref{prop3.3}.
\end{itemize}

We first prove the following proposition, 
which is the first step
for the proof of Main Theorem, and 
is also an analogue of Key Lemma in Section 4 in~\cite{sugi1} 
in the case of having multiple fixed points.
However Key Lemma in Section 4 in~\cite{sugi1} was much simpler than the following proposition.

\begin{proposition}\label{prop3.1}
 Suppose that $\zeta_1, \dots, \zeta_{\ell}$ are mutually distinct. Then
 the following two conditions~{\rm (\ref{condition3.1})} and~{\rm (\ref{condition3.2})} are equivalent:
 \begin{enumerate}
  \item The equalities $\iota\left( f,\zeta_i \right) = m_{i}$ hold for $1 \leq i \leq \ell$.
	 \label{condition3.1}
  \item There exist $m_{i,k} \in \mathbb{C}$ for $1 \leq i \leq \ell$ and $1 \leq k\leq d_i-1$ 
	such that the equality
	\begin{equation}\label{eq3.1}
	   \sum_{i=1}^{\ell} A_{d}^{d_i}\left(\zeta_i\right) 
	   \begin{pmatrix} m_{i} \\ m_{i,1}\\ \vdots \\ m_{i,d_i-1} \end{pmatrix} 
	   = \begin{pmatrix} 0 \\ \vdots \\ 0 \\ -\frac{1}{\rho} \end{pmatrix}
	\end{equation}
	holds,
	where in the case $d_i = 1$, the column vector $^t\! \left( m_i, m_{i,1},\dots, m_{i,d_i-1} \right)$ is assumed to be $(m_i)$.
	 \label{condition3.2}
 \end{enumerate}
\end{proposition}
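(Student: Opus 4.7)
The plan is to recognize equation~(\ref{eq3.1}) as encoding the first $d$ coefficients in a Laurent expansion at $\infty$, and then to match that expansion with the explicit form of $1/(z-f(z))$ via uniqueness of partial fractions.

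First I would observe that for each $k\geq 1$, the expansion
$$\frac{1}{(z-\zeta_i)^k} \;=\; \sum_{n\geq 0}\binom{n+k-1}{k-1}\zeta_i^{n}\, z^{-n-k} \qquad (|z|>|\zeta_i|)$$
shows that the coefficient of $z^{-m}$ is $\binom{m-1}{k-1}\zeta_i^{m-k}$, which is exactly the $(m,k)$-entry of $A_d^{d_i}(\zeta_i)$. Setting $m_{i,0}:=m_i$ and
$$R(z) \;:=\; \sum_{i=1}^{\ell}\sum_{k=0}^{d_i-1}\frac{m_{i,k}}{(z-\zeta_i)^{k+1}},$$
the $m$-th component of the left-hand side of (\ref{eq3.1}) is precisely the coefficient of $z^{-m}$ in the Laurent series of $R$ at $\infty$. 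Thus condition~(\ref{condition3.2}) is equivalent to the assertion $R(z) = -\rho^{-1}z^{-d} + O(z^{-d-1})$ as $z\to\infty$.

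Next I would write $R(z)=P(z)/Q(z)$ with $Q(z):=\prod_{j=1}^{\ell} (z-\zeta_j)^{d_j}$, monic of degree $d$, and $\deg P\leq d-1$. Comparing orders of vanishing at $\infty$ forces $P$ to be a constant, and then matching the $z^{-d}$ coefficient forces that constant to equal $-1/\rho$. Hence condition~(\ref{condition3.2}) is equivalent to the identity
$$R(z) \;=\; -\frac{1}{\rho\,Q(z)} \;=\; \frac{1}{z-f(z)}.$$

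Finally I would invoke uniqueness of partial fraction expansions over $\mathbb{C}$, together with the identification of $\iota(f,\zeta_i)$ with the coefficient of $(z-\zeta_i)^{-1}$ in the partial fraction expansion of $1/(z-f(z))$, which is immediate from Definition~\ref{def1.1} and the residue theorem. Since $R(z)$ is already displayed as a partial fraction expansion whose $(z-\zeta_i)^{-1}$-coefficient equals $m_i$, the identity $R(z)=1/(z-f(z))$ is equivalent to $\iota(f,\zeta_i)=m_i$ for every $i$, with the remaining $m_{i,k}$ ($1\leq k\leq d_i-1$) filling in the higher-order partial-fraction coefficients of $1/(z-f(z))$. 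This yields the equivalence (\ref{condition3.1})$\Leftrightarrow$(\ref{condition3.2}). The mildly delicate step is the forcing $\deg P = 0$ via order-of-vanishing comparison at $\infty$; otherwise the argument is a direct unwinding of the matrix equation into a Laurent-expansion identity combined with standard facts about partial fractions.
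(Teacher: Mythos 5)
Your proof is correct, and it is a genuine repackaging of the paper's argument around a different key lemma. Your central observation---that the $(m,k+1)$-entry $\binom{m-1}{k}\zeta_i^{m-1-k}$ of $A_d^{d_i}(\zeta_i)$ is the coefficient of $z^{-m}$ in the Laurent expansion of $(z-\zeta_i)^{-(k+1)}$ at $\infty$---turns equation~(\ref{eq3.1}) into the statement that $R(z)=\sum_{i,k}m_{i,k}(z-\zeta_i)^{-(k+1)}$ has expansion $-\rho^{-1}z^{-d}+O(z^{-d-1})$, and your order-of-pole comparison then pins $R$ down to $1/(z-f(z))$. The paper reaches exactly the same Laurent coefficients by evaluating $\frac{1}{2\pi\sqrt{-1}}\oint_{|z|=R}\frac{z^k}{z-f(z)}\,dz$ for $k=0,\dots,d-1$ over a large circle and again via residues at the $\zeta_i$, so the forward implication is the same computation in contour-integral dress. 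The real divergence is in $(2)\Rightarrow(1)$: you invoke uniqueness of partial fractions, whereas the paper invokes the invertibility of the square matrix $\bigl(A_d^{d_1}(\zeta_1),\dots,A_d^{d_\ell}(\zeta_\ell)\bigr)$ through Proposition~\ref{prop2.4}. These two facts are equivalent (each is a Vandermonde-type nondegeneracy statement for confluent nodes), but the paper's choice is deliberate: that determinant identity is the workhorse reused throughout Section~\ref{section3} (Propositions~\ref{prop3.2},~\ref{prop3.3},~\ref{prop3.4},~\ref{prop3.12}, Lemma~\ref{lem3.19}), so establishing the pattern at the outset is economical, while your formulation is more self-contained and makes transparent the conceptual content of the proposition, namely that~(\ref{eq3.1}) is nothing but the partial fraction decomposition of $1/(z-f(z))$ written in matrix form.
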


\begin{proof}
 Since $\deg\left( z - f(z) \right) = d \geq 2$, the equalities
 \[
    \frac{1}{2\pi\sqrt{-1}} \oint_{|z|=R} \frac{z^k}{z- f(z)}dz = 
     \begin{cases}
	0 			 & (k=0,1,\dots,d-2) \\
	-\frac{1}{\rho} &(k=d-1)
     \end{cases}
 \]
 hold for sufficiently large real number $R$.
 On the other hand, by the residue theorem, we have
 \begin{align*}
   \frac{1}{2\pi\sqrt{-1}} \oint_{|z|=R} \frac{z^k}{z- f(z)}dz
    &= \sum_{i=1}^{\ell} \frac{1}{2\pi\sqrt{-1}} \oint_{|z- \zeta_i|=\epsilon} 
	\frac{\left\{ (z - \zeta_i) + \zeta_i \right\}^k}{z- f(z)}dz\\
    &= \sum_{i=1}^{\ell} \sum_{h=0}^{\min\{k, d_i-1\}} \binom{k}{h}\!\cdot\!\zeta_i^{k-h}
	\!\cdot\! \frac{1}{2\pi\sqrt{-1}} \oint_{|z- \zeta_i|=\epsilon} 
	\frac{\left(z - \zeta_i\right)^h}{z- f(z)}dz
 \end{align*}
 for sufficiently small positive real number $\epsilon$.
 Hence putting 
 \[
    \iota_h(f, \zeta_i) := \frac{1}{2\pi\sqrt{-1}} \oint_{|z- \zeta_i|=\epsilon} 
	\frac{\left(z - \zeta_i\right)^h}{z- f(z)}dz
 \]
 for $h \geq 0$ and for sufficiently small $\epsilon >0$, we have $\iota_0(f, \zeta_i) = \iota(f, \zeta_i)$, 
 $\iota_h(f, \zeta_i) = 0$ for $h \geq d_i$, and also have
 \begin{equation}\label{eq3.3}
  \sum_{i=1}^{\ell} \sum_{h=0}^{d_i-1} \binom{k}{h}\!\cdot\!\zeta_i^{k-h}
	\!\cdot\! \iota_h(f, \zeta_i) = 
     \begin{cases}
	0 			 & (k=0,1,\dots,d-2) \\
	-\frac{1}{\rho} &(k=d-1)
     \end{cases}
 \end{equation}
 for every $f(z)=z+\rho \left( z - \zeta_1 \right)^{d_1} \cdots \left( z - \zeta_{\ell} \right)^{d_{\ell}} 
 \in {\rm Poly}_d(d_1,\dots,d_\ell)$.
 Moreover by using matrix, we find that the equalities~(\ref{eq3.3}) are equivalent to the equality
 \begin{equation}\label{eq3.4}
  \sum_{i=1}^{\ell} A^{d_i}_d(\zeta_i) 
    \begin{pmatrix} \iota_0(f, \zeta_i) \\ \iota_1(f, \zeta_i) \\ \vdots \\ \iota_{d_i-1}(f, \zeta_i)
    \end{pmatrix} 
  = \begin{pmatrix} 0 \\ \vdots \\ 0 \\ -\frac{1}{\rho} \end{pmatrix},
 \end{equation}
 which is also equivalent to
 \begin{equation}\label{eq3.5}
  \left( A^{d_1}_d(\zeta_1), A^{d_2}_d(\zeta_2), \dots, A^{d_{\ell}}_d(\zeta_{\ell}) \right) 
    \begin{pmatrix} 
      \iota_0(f, \zeta_1) \\ \vdots \\ \iota_{d_1-1}(f, \zeta_1)  \\ \vdots \\
      \iota_0(f, \zeta_{\ell}) \\ \vdots \\ \iota_{d_{\ell}-1}(f, \zeta_{\ell})
    \end{pmatrix}
  = \begin{pmatrix} 0 \\ \vdots \\ 0 \\ -\frac{1}{\rho} \end{pmatrix}.
 \end{equation}

 Hence 
 the condition~(1) implies the equality~(\ref{eq3.1})
 by putting $m_{i,k} = \iota_k(f, \zeta_i)$ for $1 \leq i \leq \ell$ and $1 \leq k\leq d_i-1$,
 which verifies the implication $(1) \Rightarrow (2)$.

 On the other hand, since the square matrix 
 $\left( A^{d_1}_d(\zeta_1), A^{d_2}_d(\zeta_2), \dots, A^{d_{\ell}}_d(\zeta_{\ell}) \right)$
 is invertible by Proposition~\ref{prop2.4},
 the equalities~(\ref{eq3.1}) and~(\ref{eq3.4}) (or~(\ref{eq3.5})) imply the equalities
 \[
   ^t\!\left( m_i, m_{i,1}, \dots, m_{i,d_i-1} \right) = \,\!
   ^t\!\left( \iota(f, \zeta_i), \iota_1(f, \zeta_i), \dots, \iota_{d_i-1}(f, \zeta_i) \right)
 \]
 for $1 \leq i \leq \ell$,
 which verifies the implication $(2) \Rightarrow (1)$.
\end{proof}

\begin{remark}
 In the rest of this section, we often use expressions like the equality~(\ref{eq3.4})
 in place of~(\ref{eq3.5}) for the simplicity of description as in the proof of Proposition~\ref{prop3.1}
\end{remark}

Concerning Proposition~\ref{prop3.1}, 
the following Propositions~\ref{prop3.2},~\ref{prop3.3} and~\ref{prop3.4} also hold.

\begin{proposition}\label{prop3.2}
 Suppose that $\zeta_1, \dots, \zeta_{\ell}$ are mutually distinct, and that the equality~{\rm (\ref{eq3.1})} holds
 for $m_{i,k} \in \mathbb{C}$ with $1 \leq i \leq \ell$ and $1 \leq k\leq d_i-1$.
 Then we have the following for $1 \leq i \leq \ell$:
 \begin{enumerate}
  \item $m_{i,d_i-1} \ne 0$ if $d_i \geq 2$.
  \item $m_i \ne 0$ if $d_i =1$.
 \end{enumerate}
\end{proposition}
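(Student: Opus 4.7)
The plan is to identify the coefficients $m_i$ and $m_{i,d_i-1}$ with specific contour integrals, and then compute the relevant residues directly.

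First, I would note that by Proposition~\ref{prop2.4} with $r_j = d_j$, the $d \times d$ matrix
$\bigl( A_d^{d_1}(\zeta_1), \dots, A_d^{d_\ell}(\zeta_\ell) \bigr)$ is invertible, since $\zeta_1,\dots,\zeta_\ell$ are mutually distinct. Consequently, for the given right-hand side of~(\ref{eq3.1}), the tuple of entries $m_i, m_{i,1},\dots,m_{i,d_i-1}$ ($1 \leq i \leq \ell$) is uniquely determined. But the proof of Proposition~\ref{prop3.1} already exhibited one such solution, namely $m_i = \iota_0(f,\zeta_i)$ and $m_{i,k} = \iota_k(f,\zeta_i)$, where
\[
 \iota_h(f,\zeta_i) = \frac{1}{2\pi\sqrt{-1}} \oint_{|z-\zeta_i|=\epsilon} \frac{(z-\zeta_i)^h}{z-f(z)}\, dz.
\]
By uniqueness, these must coincide with the $m_i, m_{i,k}$ of the hypothesis.

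Next, I would compute $\iota_{d_i-1}(f,\zeta_i)$ explicitly. Using $z - f(z) = -\rho(z-\zeta_1)^{d_1}\cdots(z-\zeta_\ell)^{d_\ell}$, the integrand becomes
\[
 \frac{(z-\zeta_i)^{d_i-1}}{z-f(z)} \;=\; \frac{-1/\rho}{(z-\zeta_i) \prod_{j\neq i}(z-\zeta_j)^{d_j}},
\]
which has a simple pole at $\zeta_i$ with residue $\dfrac{-1/\rho}{\prod_{j\neq i}(\zeta_i-\zeta_j)^{d_j}}$. This is nonzero, precisely because $\rho \ne 0$ and the $\zeta_j$ are mutually distinct. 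This establishes~(1), since $m_{i,d_i-1} = \iota_{d_i-1}(f,\zeta_i)$.

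For~(2), when $d_i = 1$ the relevant coefficient is $m_i = \iota_0(f,\zeta_i)$, which coincides with $\iota_{d_i-1}(f,\zeta_i)$; the same residue calculation applies verbatim and yields $m_i \ne 0$. The argument is therefore essentially a single residue computation, with no real obstacle: the crux is to observe that the invertibility provided by Proposition~\ref{prop2.4} forces the $m_{i,k}$ to equal the integrals $\iota_k(f,\zeta_i)$ computed in Proposition~\ref{prop3.1}, after which the leading residue is manifestly a product of nonzero factors.
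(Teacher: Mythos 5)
Your proof is correct, and it takes a genuinely different route from the paper's. The paper argues by contradiction purely within linear algebra: it assumes $m_{1,d_1-1}=0$ (or $m_1=0$ when $d_1=1$), drops the last row of~(\ref{eq3.1}) to obtain a homogeneous $(d-1)\times(d-1)$ system whose coefficient matrix $\bigl(A_{d-1}^{d_1-1}(\zeta_1), A_{d-1}^{d_2}(\zeta_2),\dots,A_{d-1}^{d_\ell}(\zeta_\ell)\bigr)$ is invertible by Proposition~\ref{prop2.4}, forces all entries to vanish, and then contradicts the $-1/\rho$ in~(\ref{eq3.1}). You instead use the invertibility of the full $d\times d$ matrix to identify the $m_{i,k}$ uniquely with the residues $\iota_k(f,\zeta_i)$ that already appeared in the proof of Proposition~\ref{prop3.1}, and then compute $\iota_{d_i-1}(f,\zeta_i)$ directly from $z-f(z)=-\rho\prod_j(z-\zeta_j)^{d_j}$, obtaining the explicit nonzero value $\dfrac{-1/\rho}{\prod_{j\neq i}(\zeta_i-\zeta_j)^{d_j}}$. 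Your version has the advantage of producing a closed-form expression for the top coefficient (hence a stronger, quantitative statement), while the paper's version keeps the argument self-contained inside the linear-algebra toolbox of Section~\ref{section2} and avoids invoking the contour-integral identification again.
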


\begin{proof}
 Without loss of generality, we may assume that $i=1$.
 Suppose $d_1 \geq 2$ and $m_{1,d_1-1}=0$. Then the equality~(\ref{eq3.1}) is equivalent to the equality
 \[
   A_{d}^{d_1-1}\left(\zeta_1\right) 
   \begin{pmatrix} m_{1} \\ m_{1,1}\\ \vdots \\ m_{1,d_1-2} \end{pmatrix} 
   + \sum_{i=2}^{\ell} A_{d}^{d_i}\left(\zeta_i\right) 
   \begin{pmatrix} m_{i} \\ m_{i,1}\\ \vdots \\ m_{i,d_i-1} \end{pmatrix} 
   = \begin{pmatrix} 0 \\ \vdots \\ 0 \\ -\frac{1}{\rho} \end{pmatrix},
 \]
 which implies
 \[
   A_{d-1}^{d_1-1}\left(\zeta_1\right) 
   \begin{pmatrix} m_{1} \\ m_{1,1}\\ \vdots \\ m_{1,d_1-2} \end{pmatrix} 
   + \sum_{i=2}^{\ell} A_{d-1}^{d_i}\left(\zeta_i\right) 
   \begin{pmatrix} m_{i} \\ m_{i,1}\\ \vdots \\ m_{i,d_i-1} \end{pmatrix} 
   = \begin{pmatrix} 0 \\ \vdots \\ 0 \end{pmatrix}.
 \]
 Since the square matrix 
 $\left( A^{d_1-1}_{d-1}(\zeta_1), A^{d_2}_{d-1}(\zeta_2), \dots, A^{d_{\ell}}_{d-1}(\zeta_{\ell}) \right)$
 is invertible by Proposition~\ref{prop2.4},
 we have $^t\!\left( m_i, m_{i,1}, \dots, m_{i,d_i-1} \right) = \,\! ^t\!\left( 0, 0,\dots, 0 \right)$
 for every $1 \leq i \leq \ell$.
 Hence the left-hand side of the equality~(\ref{eq3.1}) is equal to zero, which contradicts ~(\ref{eq3.1}).
 We therefore have the contradiction, which implies the assertion~(1).

 Suppose $d_1=1$ and $m_1 =0$ next.
 Then the equality~(\ref{eq3.1}) is, in this case, equivalent to
 \[
   \sum_{i=2}^{\ell} A_{d}^{d_i}\left(\zeta_i\right) 
   \begin{pmatrix} m_{i} \\ m_{i,1}\\ \vdots \\ m_{i,d_i-1} \end{pmatrix} 
   = \begin{pmatrix} 0 \\ \vdots \\ 0 \\ -\frac{1}{\rho} \end{pmatrix}.
 \]
 Hence by a similar argument to the above, 
 the invertibility of the square matrix \\
 $\left( A^{d_2}_{d-1}(\zeta_2), \dots, A^{d_{\ell}}_{d-1}(\zeta_{\ell}) \right)$ leads to a contradiction,
 which implies the assertion~(2).
\end{proof}

\begin{proposition}\label{prop3.3}
 Suppose that $f(z)$ is an element of ${\rm Poly}_d(d_1,\dots,d_\ell)$.
 Then $\iota\left( f,\zeta \right)  \ne 0$ holds for some $\zeta \in {\rm Fix}(f)$.
 Hence in the case $m=(0,\dots,0)$, we have 
 $\Phi_d(d_1,\dots,d_\ell)^{-1}(\overline{m}) = \widehat{\Phi}_d(d_1,\dots,d_\ell)^{-1}(\overline{m}) = \emptyset$.
\end{proposition}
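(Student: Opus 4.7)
The plan is to argue by contradiction, exploiting the fact that
\[
 g(z)\ :=\ \frac{1}{z-f(z)}\ =\ -\,\frac{1}{\rho\prod_{i=1}^{\ell}(z-\zeta_i)^{d_i}}
\]
is an explicit, nonzero rational function whose residues at the $\zeta_i$ are precisely the holomorphic indices $\iota(f,\zeta_i)$ (Definition~\ref{def1.1}). Suppose for contradiction that $\iota(f,\zeta_i)=0$ for every $i$; the goal is then to force $g\equiv 0$, which is absurd.

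First I would dispose of the easy subcase: if some $d_i=1$, then $g$ has a simple pole at $\zeta_i$ with residue $-1/\bigl(\rho\prod_{j\ne i}(\zeta_i-\zeta_j)^{d_j}\bigr)\ne 0$, contradicting the assumption at once (this is essentially Proposition~\ref{prop3.2}~(2)). So we may assume $d_i\geq 2$ for every $i$. Since all residues of $g$ vanish, one constructs a rational antiderivative $h$ by summing the local antiderivatives of the principal parts; the difference $g-h'$ is then a rational function with no poles in $\mathbb{C}$ and vanishing at $\infty$, hence identically zero, and after a constant adjustment we may take $h\to 0$ at $\infty$. By construction $h$ has poles only at the $\zeta_i$ of orders at most $d_i-1$, so
\[
 h(z)\ =\ \frac{P(z)}{\prod_{i=1}^{\ell}(z-\zeta_i)^{d_i-1}}
\]
for some polynomial $P$ with $\deg P<d-\ell$. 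Comparing asymptotics at $\infty$: from $g(z)\sim -1/(\rho z^d)$ termwise integration gives $h(z)\sim 1/\bigl((d-1)\rho z^{d-1}\bigr)$, whereas $h(z)\sim c\,z^{\deg P-(d-\ell)}$ for some nonzero $c$ when $P\not\equiv 0$. Matching the exponents forces $\deg P=1-\ell$, which is negative because $\ell\geq 2$: impossible. And $P\equiv 0$ gives $h\equiv 0$, hence $g\equiv 0$, again impossible. This contradiction delivers the first claim; the ``Hence'' clause is then immediate, since any $f$ in the fiber over $\overline{m}=\{(d_1,0),\dots,(d_\ell,0)\}$ would have all holomorphic indices equal to zero.

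The main obstacle is really just the asymptotic comparison together with carefully ruling out the trivial case $P\equiv 0$, so that one cannot escape the contradiction by letting $h$ be zero. An equivalent argument staying within the linear-algebra framework of Section~\ref{section2} would start from~(\ref{eq3.1}): the assumption $m_i=0$ for every $i$ reduces that equation (using that the top row of the matrix $A_{d,0}^{d_i,1}(\zeta_i)$ vanishes, followed by Proposition~\ref{prop2.3}) to an overdetermined linear system
\[
 \bigl(A_{d-1}^{d_1-1}(\zeta_1),\dots,A_{d-1}^{d_\ell-1}(\zeta_\ell)\bigr)\,\vec{w}\ =\ \bigl(0,\dots,0,-\tfrac{1}{(d-1)\rho}\bigr)^{t},
\]
whose inconsistency is detected by pairing from the left with the coefficient vector of $E(z)=\prod_i(z-\zeta_i)^{d_i-1}\cdot R(z)$, for any polynomial $R$ of degree $\ell-2$: the conditions $E^{(j-1)}(\zeta_i)=0$ for $1\leq j\leq d_i-1$ place this vector in the left kernel of the matrix, while its pairing with the right-hand side is a nonzero multiple of the leading coefficient of $E$.
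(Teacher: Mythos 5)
Your primary argument is correct and takes a genuinely different route from the paper. The paper stays entirely inside its linear-algebra framework: starting from equality~(\ref{eq3.4}), it notes that when all $\iota(f,\zeta_i)=0$ the first columns drop out, truncates to the top $d-\ell$ rows, and invokes the invertibility of the square matrix $\left( A^{d_1, 1}_{d-\ell+1,1}(\zeta_1), \dots, A^{d_{\ell}, 1}_{d-\ell+1,1}(\zeta_{\ell})  \right)$ from Proposition~\ref{prop2.4} to force all higher-order indices $\iota_k(f,\zeta_i)$ to vanish as well, contradicting the $-1/\rho$ in the last row. Your argument instead treats $g=1/(z-f(z))$ directly as a rational function: vanishing residues give a rational primitive $h=P/\prod(z-\zeta_i)^{d_i-1}$ (after handling the forced $d_i\ge 2$ case, the $d_i=1$ case being immediate), and comparing the decay rate $z^{-(d-1)}$ of $h$ at infinity against the possible order $z^{\deg P-(d-\ell)}$ forces $\deg P=1-\ell<0$, which is impossible, while $P\equiv 0$ makes $g\equiv 0$. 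This is more elementary and self-contained (no appeal to the determinant formula of Proposition~\ref{prop2.4}), at the cost of not reusing the matrix machinery the paper builds for the rest of Section~3. Your closing linear-algebra sketch --- pairing against the coefficient vector of $E(z)=\prod(z-\zeta_i)^{d_i-1}R(z)$ --- is essentially the paper's argument dressed up as a left-kernel computation (the two matrix families differ only by the diagonal conjugation of Proposition~\ref{prop2.3}), and it is also correct.
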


\begin{proof}
 By the proof of Proposition~\ref{prop3.1}, 
 we always have the equality~(\ref{eq3.4})
 for \\
 $f(z) \in {\rm Poly}_d(d_1,\dots,d_\ell)$.
 Suppose 
 $\iota(f,\zeta_i)=0$ 
 for every $1 \leq i \leq \ell$.
 Then the equality~(\ref{eq3.4}) is equivalent to the equality
 \[
   \sum_{i=1}^{\ell} A^{d_i, 1}_{d,1}(\zeta_i) 
    \begin{pmatrix} \iota_1(f, \zeta_i) \\ \vdots \\ \iota_{d_i-1}(f, \zeta_i) \end{pmatrix} 
   = \begin{pmatrix} 0 \\ \vdots \\ 0 \\ -\frac{1}{\rho} \end{pmatrix},
 \]
 which implies
 \[
   \sum_{i=1}^{\ell} A^{d_i, 1}_{d-\ell+1,1}(\zeta_i) 
    \begin{pmatrix} \iota_1(f, \zeta_i) \\ \vdots \\ \iota_{d_i-1}(f, \zeta_i) \end{pmatrix} 
   = \begin{pmatrix} 0 \\ \vdots \\ 0 \end{pmatrix}
 \]
 since $\ell \geq 2$.
 Since the square matrix 
 $\left( A^{d_1, 1}_{d-\ell+1,1}(\zeta_1), A^{d_2, 1}_{d-\ell+1,1}(\zeta_2), \dots, 
 A^{d_{\ell}, 1}_{d-\ell+1,1}(\zeta_{\ell})  \right)$ is invertible by Proposition~\ref{prop2.4},
 we have $^t\!\left( \iota_1(f, \zeta_i), \dots, \iota_{d_i-1}(f, \zeta_i) \right) =\!\, ^t\!(0, \dots,0)$
 for every $1 \leq i \leq \ell$.
 Hence the left-hand side of the equality~(\ref{eq3.4}) must be equal to zero,
 which contradicts the equality~(\ref{eq3.4}).
 We therefore have $\iota(f, \zeta_i) \ne 0$ for some $1 \leq i \leq \ell$,
 which completes the proof of Proposition~\ref{prop3.3}.
\end{proof}

In the rest of this section,
$m = \left( m_1,\dots,m_\ell \right)$ is always assumed not to be equal to $(0,\dots,0)$,
as mentioned in the opening paragraph of this section.

\begin{proposition}\label{prop3.4}
 Suppose that $\zeta_1,\dots,\zeta_{\ell}$ are mutually distinct.
 If the equality
 \begin{equation}\label{eq3.2}
  \sum_{i=1}^{\ell} A_{d-1}^{d_i}\left(\zeta_i\right) 
   \begin{pmatrix} m_{i} \\ m_{i,1}\\ \vdots \\ m_{i,d_i-1} \end{pmatrix} 
  = \begin{pmatrix} 0 \\ \vdots \\ 0 \end{pmatrix}
 \end{equation}
 holds 
 for $m_{i,k} \in \mathbb{C}$ with $1 \leq i \leq \ell$ and $1 \leq k\leq d_i-1$,
 then there exists a unique non-zero complex number $\rho$ such that 
 the equality~{\rm (\ref{eq3.1})} holds.
\end{proposition}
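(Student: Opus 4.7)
The proof plan centers on the observation that $A_d^{d_i}(\zeta_i)$ is exactly $A_{d-1}^{d_i}(\zeta_i)$ with one extra row (the $d$-th row of the Pascal-type matrix) appended at the bottom. Consequently the vector on the left-hand side of (\ref{eq3.1}) coincides, in its first $d-1$ entries, with the left-hand side of (\ref{eq3.2}), while the $d$-th entry is a complex number $c$ determined by the given data $\zeta_i$, $m_i$, $m_{i,k}$. Thus (\ref{eq3.1}) is equivalent to the single scalar equation $c = -\tfrac{1}{\rho}$, which has a solution with $\rho \ne 0$ if and only if $c \ne 0$, and in that case the solution $\rho = -1/c$ is unique.

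Hence the task reduces to showing $c \ne 0$. I would argue this by contradiction. Suppose $c = 0$. Combined with the hypothesis (\ref{eq3.2}), this means that the full $d$-dimensional vector
\[
\sum_{i=1}^{\ell} A_{d}^{d_i}(\zeta_i)\, ^t\!\bigl(m_i, m_{i,1}, \dots, m_{i,d_i-1}\bigr)
\]
is equal to the zero vector.

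At this point I invoke Proposition~\ref{prop2.4}: since $\zeta_1, \dots, \zeta_\ell$ are mutually distinct and $d_1 + \cdots + d_\ell = d$, the $d \times d$ block matrix $\bigl(A_{d}^{d_1}(\zeta_1), \dots, A_{d}^{d_\ell}(\zeta_\ell)\bigr)$ has nonzero determinant $\prod_{v<u}(\zeta_u-\zeta_v)^{d_v d_u}$, hence is invertible. Therefore every column vector $^t\!(m_i, m_{i,1}, \dots, m_{i,d_i-1})$ must be zero. In particular $m_i = 0$ for all $i$, i.e.\ $m = (0,\dots,0)$, contradicting the standing assumption imposed on $m$ in this section.

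The only subtle point is making sure the standing hypothesis $m \ne (0,\dots,0)$ is used exactly here; beyond that the argument is a direct application of Proposition~\ref{prop2.4} together with the simple row-extension relationship between $A_{d-1}^{d_i}(\zeta_i)$ and $A_d^{d_i}(\zeta_i)$. Uniqueness of $\rho$ is then automatic from $\rho = -1/c$, completing the proof.
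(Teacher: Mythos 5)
Your proposal is correct and follows essentially the same argument as the paper: both reduce the claim to showing that the $d$-th scalar entry is nonzero, and both derive the contradiction $m = (0,\dots,0)$ from the invertibility of $\bigl(A_d^{d_1}(\zeta_1),\dots,A_d^{d_\ell}(\zeta_\ell)\bigr)$ via Proposition~\ref{prop2.4}. Your write-up is merely a bit more explicit about the row-extension relationship and the scalar $c$, which the paper leaves implicit.
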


\begin{proof}
 Suppose that such $\rho$ does not exist.
 Then the left-hand side of the equality~(\ref{eq3.1}) must be zero.
 Since the square matrix 
 $\left( A^{d_1}_d(\zeta_1), A^{d_2}_d(\zeta_2), \dots, A^{d_{\ell}}_d(\zeta_{\ell}) \right)$
 is invertible by Proposition~\ref{prop2.4}, we have 
 $^t\!\left( m_i, m_{i,1}, \dots, m_{i,d_i-1} \right) =\!\, ^t\!(0, 0, \dots,0)$
 for every $1 \leq i \leq \ell$,
 which contradicts $(m_1,\dots,m_{\ell}) \ne (0, \dots, 0)$.
 Hence the contradiction assures the existence of $\rho$ satisfying the equality~(\ref{eq3.1}).
\end{proof}

\vspace{10pt}

We proceed to the next step, in which
we exclude $m_{i,k}$ for $1 \leq i \leq \ell$ and $1 \leq k \leq d_i - 1$
from the equality~(\ref{eq3.2}).
From the standpoint of technique, this step is the most crucial in the proof of Main Theorem.
As can be verified in~\cite{sugi1}, this step does not exist in the case of having no multiple fixed points.

\begin{proposition}\label{prop3.5}
 Suppose that $\zeta_1,\dots,\zeta_{\ell}$ are mutually distinct. Then
 the following two conditions~{\rm (1)} and~{\rm (2)} are equivalent:
  \begin{enumerate}
   \item There exist $m_{i,k} \in \mathbb{C}$ for $1 \leq i \leq \ell$ and $1 \leq k\leq d_i-1$ 
	such that the equality~{\rm (\ref{eq3.2})} holds.
   \item The equality
	\begin{equation}\label{eq3.6}
	    A_{\ell-2}^{d-2}(0) \left(\prod_{i=1}^{\ell}\left( -\zeta_i I_{d-2} +N_{d-2} \right)^{d_i - 1}\right)
	    \left(X_{d-2}\right)^{-1}
	   \begin{pmatrix}
	    \zeta_1&  \dots & \zeta_{\ell} \\
	    \zeta_1^2&  \dots & \zeta_{\ell}^2 \\
	    \vdots & \ddots & \vdots\\
	    \zeta_1^{d-2} & \dots & \zeta_{\ell}^{d-2}
	   \end{pmatrix}
	   \begin{pmatrix}
	    m_1 \\ \vdots \\ m_{\ell}
	   \end{pmatrix}
	   = 0
	\end{equation}
	holds.
  \end{enumerate}
\end{proposition}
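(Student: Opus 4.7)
The plan is to reduce (3.2) to a simpler image-containment condition on $m$, then match that condition to (3.6) through a polynomial-ring reinterpretation. Row~1 of (3.2) reads $\sum_i m_i = 0$, which holds by hypothesis, so (3.2) is equivalent to its rows~2 through $d-1$. I will separate from each block $A_{d-1}^{d_i}(\zeta_i)$ the first column, whose contribution is $\sum_i m_i(1,\zeta_i,\dots,\zeta_i^{d-2})^t$, and rewrite the remaining block as $A_{d-1,1}^{d_i,1}(\zeta_i) = X_{d-2}\,A_{d-2}^{d_i-1}(\zeta_i)\,X_{d_i-1}^{-1}$ via Proposition~\ref{prop2.3}, absorbing the diagonal $X_{d_i-1}^{-1}$ into a rescaling of the unknowns $m_{i,k}$. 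The system becomes
\[
  X_{d-2}^{-1}Vm + \sum_{i=1}^{\ell} A_{d-2}^{d_i-1}(\zeta_i)\,\tilde m_i' = 0,
\]
so condition~(1) is equivalent to $X_{d-2}^{-1}Vm \in \mathrm{Im}(\tilde A)$, where $\tilde A := \bigl(A_{d-2}^{d_i-1}(\zeta_i)\bigr)_{i=1}^{\ell}$.

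The key move is to identify $\mathbb{C}^{d-2}$ with $\mathbb{C}[z]/(z^{d-2})$ via $e_p \leftrightarrow z^{d-2-p}$. Under this identification $N_{d-2}$ becomes multiplication by $z$, hence $\prod_i(-\zeta_i I_{d-2}+N_{d-2})^{d_i-1}$ becomes multiplication by $R(z):=\prod_i(z-\zeta_i)^{d_i-1}$ modulo $z^{d-2}$, while $A_{\ell-2}^{d-2}(0)$ reads off the coefficients of $z^{d-3},\dots,z^{d-\ell}$. The equivalence of (1) and (2) will follow from
\[
  \mathrm{Im}(\tilde A) = \ker\!\bigl(A_{\ell-2}^{d-2}(0)\cdot\mathrm{mult}_R\bigr),
\]
which I establish in two steps.

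The main technical step is the inclusion $\subseteq$, using the polynomial identity
\[
  R(z)\,\frac{z^{d-2}-\alpha^{d-2}}{z-\alpha}
  = (z^{d-2}-\alpha^{d-2})\,\frac{R(z)-R(\alpha)}{z-\alpha} + R(\alpha)\,\frac{z^{d-2}-\alpha^{d-2}}{z-\alpha},
\]
in which $Q_\alpha(z) := (R(z)-R(\alpha))/(z-\alpha)$ has $z$-degree at most $d-\ell-1$. Differentiating $k$ times in $\alpha$ and evaluating at $\alpha=\zeta_j$ with $d_j\ge 2$ and $k\le d_j-2$ annihilates the $R(\alpha)$ term completely, because $R$ has a zero of order $d_j-1$ at $\zeta_j$; reducing modulo $z^{d-2}$ kills the $z^{d-2}$ factor in the first term, leaving a polynomial of $z$-degree at most $d-\ell-1$. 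Since under the identification the columns of $\tilde A$ are exactly these $\alpha$-derivatives of $(z^{d-2}-\alpha^{d-2})/(z-\alpha)$ at the $\zeta_j$'s, each column of $\mathrm{mult}_R\circ\tilde A$ has vanishing coefficients at $z^{d-3},\dots,z^{d-\ell}$, giving the inclusion.

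Equality closes by a dimension count. Proposition~\ref{prop2.4} gives $\mathrm{rank}(\tilde A) = d-\ell$. Writing $R(z) = z^{e}\tilde R(z)$ with $\tilde R(0)\ne 0$ (so $e=d_{i_0}-1$ if some $\zeta_{i_0}=0$, and $e=0$ otherwise), the image of $\mathrm{mult}_R$ on $\mathbb{C}[z]/(z^{d-2})$ equals $\mathrm{span}(z^e,\dots,z^{d-3})$; the combinatorial bound $d_{i_0}+(\ell-1)\le\sum_j d_j=d$ forces $e\le d-\ell$, so the projection to the coefficients at $z^{d-\ell},\dots,z^{d-3}$ is surjective, making $A_{\ell-2}^{d-2}(0)\cdot\mathrm{mult}_R$ of rank $\ell-2$ and its kernel of dimension $d-\ell$, which matches $\mathrm{Im}(\tilde A)$. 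The main hurdle is the polynomial-identity step and the accompanying degree tracking; the rest is Propositions~\ref{prop2.3},~\ref{prop2.4} together with bookkeeping.
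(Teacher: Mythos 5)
Your proof is correct, and while the opening reduction (drop the row $\sum_i m_i = 0$, pass through Proposition~\ref{prop2.3} to convert $A_{d-1,1}^{d_i,1}(\zeta_i)$ into $X_{d-2}A_{d-2}^{d_i-1}(\zeta_i)X_{d_i-1}^{-1}$, absorb the diagonal factor into the unknowns) coincides with what the paper does, you then prove the crucial kernel--image equality by a genuinely different route. The paper isolates this equality as a separate Lemma (its Lemma~\ref{lem3.6}) and verifies the containment $\mathrm{Im}\,\tilde A \subseteq \ker$ by a direct entry-by-entry computation: after spelling out $(-\zeta_1 I + N)^{d_1-1}A_{d-2}^{d_1-1}(\zeta_1)$, it reduces the relevant entries to $\left.\frac{1}{(j-1)!}(d/dx)^{j-1}\{x^{i-1}(x+1)^{d_1-1}\}\right|_{x=-1}$ and observes this vanishes because $j-1 < d_1-1$; the surjectivity (hence the dimension count) is then handled by an ad hoc case split depending on whether some $\zeta_i = 0$. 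You instead transport everything to $\mathbb{C}[z]/(z^{d-2})$ via $e_p \leftrightarrow z^{d-2-p}$, recognize the columns of $\tilde A$ as $\alpha$-Taylor coefficients of $(z^{d-2}-\alpha^{d-2})/(z-\alpha)$ at $\alpha=\zeta_i$, and get the vanishing from a single polynomial identity together with the degree bound $\deg_z Q_\alpha \le d-\ell-1$ and the fact that $R$ vanishes to order $d_j-1$ at $\zeta_j$; the rank count comes uniformly from $R=z^e\tilde R$ with $\tilde R(0)\ne 0$ and $e\le d-\ell$, subsuming the paper's $\zeta_i=0$ case split. Your version makes transparent why the binomial sum in the paper collapses (it is a disguised $(x+1)^{d'_1}$ vanishing under a derivative of too-low order) and why the generating-matrix shape is the natural one; the paper's version stays closer to the matrix entries and avoids introducing the polynomial-module language. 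Both are complete and the bookkeeping you supply (including the $d=\ell$ edge case, where $R\equiv 1$, $\tilde A$ is empty, and both sides are $\{0\}$) goes through.
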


In Proposition~\ref{prop3.5}, note that in the case $\ell = 2$, the equality~(\ref{eq3.6}) always holds
since it can be considered to be an equality in $0$-dimensional $\mathbb{C}$-vector space $\mathbb{C}^0$.

Proposition~\ref{prop3.5} is obtained from Lemma~\ref{lem3.5} below
by substituting $q=\ell'=\ell$, $d'_i = d_i - 1$, $\alpha_i = \zeta_i$ and $m'_i = m_i$ for $1 \leq i \leq \ell$.
Lemma~\ref{lem3.5} is also utilized later in the proofs of Proposition~\ref{prop3.12} and Lemma~\ref{lem3.19}.

\begin{lemma}\label{lem3.5}
 Let $\ell', q, d'_1, \dots, d'_q$ be non-negative integers with $q \geq 1,\ \ell' \geq 2$ and $d'_1+\dots+d'_q=d-\ell'$.
 Moreover let $\alpha_1,\dots,\alpha_q$ be mutually distinct complex numbers, 
 and $m'_1,\dots,m'_q$ complex numbers with $m'_1 + \dots + m'_q = 0$.
 Then the following two conditions~{\rm (1)} and~{\rm (2)} are equivalent:
  \begin{enumerate}
   \item There exist $m_{u,k} \in \mathbb{C}$ for $1 \leq u \leq q$ and $1 \leq k\leq d'_u$ 
	such that the equality 
	\begin{equation}\label{eq3.2.2}
	   \sum_{u=1}^q A_{d-1}^{d'_u+1}\left(\alpha_u\right)
		\begin{pmatrix} m'_{u} \\ m_{u,1}\\ \vdots \\ m_{u,d'_u} \end{pmatrix}
	   = 0
	\end{equation}
	holds.
   \item The equality
	\begin{equation}\label{eq3.6.2}
	    A_{\ell'-2}^{d-2}(0) \left(\prod_{u=1}^{q}\left( -\alpha_u I_{d-2} +N_{d-2} \right)^{d'_u}\right)
	    \left(X_{d-2}\right)^{-1}
	   \begin{pmatrix}
	    \alpha_1&  \dots & \alpha_{q} \\
	    \alpha_1^2&  \dots & \alpha_{q}^2 \\
	    \vdots & \ddots & \vdots\\
	    \alpha_1^{d-2} & \dots & \alpha_{q}^{d-2}
	   \end{pmatrix}
	   \begin{pmatrix}
	    m'_1 \\ \vdots \\ m'_{q}
	   \end{pmatrix}
	   = 0
	\end{equation}
	holds.
  \end{enumerate}
\end{lemma}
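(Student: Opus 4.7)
The plan is to reduce condition (1) to the statement that a specific vector lies in the column span of $\mathcal{A} := \bigl(A_{d-2}^{d'_1}(\alpha_1), \ldots, A_{d-2}^{d'_q}(\alpha_q)\bigr)$, and then identify that column span with the kernel of the linear map appearing in condition (2). First I would split off the contribution of the first column of each block $A_{d-1}^{d'_u+1}(\alpha_u)$ in~(\ref{eq3.2.2}), which contributes the vector $m'_u \cdot (1, \alpha_u, \ldots, \alpha_u^{d-2})^t$. The first row of each remaining submatrix $A_{d-1}^{d'_u+1, 1}(\alpha_u)$ vanishes because $\binom{0}{j} = 0$ for $j \geq 1$, so the top row of the resulting equation reduces to $\sum_u m'_u = 0$, which holds by hypothesis. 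For the lower $(d-2)$-row block I would apply Proposition~\ref{prop2.3} to write $A_{d-1, 1}^{d'_u+1, 1}(\alpha_u) = X_{d-2} A_{d-2}^{d'_u}(\alpha_u) X_{d'_u}^{-1}$, then absorb $X_{d'_u}^{-1}$ into the free variables $m_{u,k}$ and clear $X_{d-2}$ on the left. After this reduction, condition (1) is equivalent to: the vector $-X_{d-2}^{-1} V m'$ lies in the column span of $\mathcal{A}$, where $V$ and $m'$ are as in condition (2).

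The heart of the argument is then the claim that $\mathrm{span}(\mathcal{A}) = \ker \phi$, where $\phi \colon \mathbb{C}^{d-2} \to \mathbb{C}^{\ell'-2}$ is defined by $\phi(v) := A_{\ell'-2}^{d-2}(0) \prod_u (-\alpha_u I_{d-2} + N_{d-2})^{d'_u} v$. Granting this, condition (1) becomes $\phi(X_{d-2}^{-1} V m') = 0$, which is exactly condition (2). To establish $\mathrm{span}(\mathcal{A}) \subseteq \ker\phi$, I would adopt a generating-function viewpoint: identify $v \in \mathbb{C}^{d-2}$ with the Laurent polynomial $L_v(z) := \sum_i v_i z^{-i}$. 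Under this identification, $N_{d-2}$ acts by ``multiply by $z$ and drop the constant term,'' so $\prod_u (-\alpha_u I_{d-2} + N_{d-2})^{d'_u}$ corresponds to multiplication by $P(z) := \prod_u (z - \alpha_u)^{d'_u}$ followed by truncation to strictly negative powers, while the $j$-th column of $A_{d-2}^{d'_v}(\alpha_v)$ represents the truncation of $(z - \alpha_v)^{-j}$ at infinity. For $j \leq d'_v$, the product $P(z)(z-\alpha_v)^{-j}$ is a polynomial (the $(z-\alpha_v)^{d'_v}$ factor cancels the pole), so the only negative-power contribution arises from the truncated tail of $(z - \alpha_v)^{-j}$; a degree count shows this contribution is supported in powers $z^{-(\ell'-1)}, z^{-\ell'}, \ldots$, and applying $A_{\ell'-2}^{d-2}(0) = [I_{\ell'-2} \mid 0]$, which extracts the coefficients of $z^{-1}, \ldots, z^{-(\ell'-2)}$, yields zero.

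For the reverse inclusion I would argue by dimension: Proposition~\ref{prop2.4} gives $\dim \mathrm{span}(\mathcal{A}) = d - \ell'$, so it suffices to show that $\phi$ has rank exactly $\ell' - 2$. Since $A_{\ell'-2}^{d-2}(0) = [I_{\ell'-2} \mid 0]$, the matrix of $\phi$ consists of the first $\ell' - 2$ rows of the upper-triangular Toeplitz matrix $P(N_{d-2})$, whose $i$-th row is the rightward shift by $i-1$ positions of the coefficient vector of $P(x)$. A nontrivial linear relation among these rows would produce a nonzero polynomial $c(x)$ of degree at most $\ell' - 3$ with $c(x) P(x) = 0$ as a polynomial of degree at most $d - 3$, contradicting $P(x) \neq 0$. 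I expect the main technical obstacle to be making the generating-function interpretation in the second paragraph fully rigorous---specifically, verifying that the iterated projections onto strictly negative powers inherent in $\prod_u (-\alpha_u I_{d-2} + N_{d-2})^{d'_u}$ commute with further multiplications, so that the composite operator genuinely computes the negative part of $P(z) L_v(z)$ without accumulating discarded positive-power pieces.
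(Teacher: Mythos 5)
Your proposal is correct, and the overall scaffolding matches the paper's: split off the first column of each block $A_{d-1}^{d'_u+1}(\alpha_u)$, observe that the first row of the remaining system is $\sum_u m'_u = 0$, apply Proposition~\ref{prop2.3} and clear $X_{d-2}$, and reduce condition~(1) to membership of $-X_{d-2}^{-1}Vm'$ in $\mathrm{span}(\mathcal{A})$. Where you genuinely diverge from the paper is in establishing $\mathrm{span}(\mathcal{A}) = \ker\phi$, which the paper isolates as Lemma~\ref{lem3.6}. The paper proves $\phi\bigl(A_{d-2}^{d'_v}(\alpha_v)\bigr) = 0$ by an entry-by-entry computation: it rewrites the $(i,j)$ entry of $(-\alpha_1 I + N)^{d'_1}A_{d-2}^{d'_1}(\alpha_1)$ as a binomial sum, recasts that sum as $\frac{1}{(j-1)!}\bigl(\frac{d}{dx}\bigr)^{j-1}\bigl[x^{i-1}(x+1)^{d'_1}\bigr]\big|_{x=-1}$, and notes this vanishes because $j-1 < d'_1$; it then tracks which entries of $A_{\ell'-2}^{d-2}(0)\prod_{u\ge2}(\cdot)^{d'_u}$ are zero to finish. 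It also proves surjectivity of $\phi$ by a separate case split on whether some $\alpha_u = 0$. Your generating-function argument replaces both of those with a single conceptual picture: columns of $A_{d-2}^{d'_v}(\alpha_v)$ are truncated expansions of $(z-\alpha_v)^{-j}$ at infinity, $\prod_u(-\alpha_u I + N)^{d'_u}$ is multiplication by $P(z)$ followed by projection, $P(z)(z-\alpha_v)^{-j}$ is a polynomial for $j \le d'_v$, and the contribution of the discarded deep tail of $(z-\alpha_v)^{-j}$ lands in degrees $\le -(\ell'-1)$, hence is killed by $A_{\ell'-2}^{d-2}(0)$. Your surjectivity argument via the Toeplitz structure of $P(N_{d-2})$ and the observation that a dependency among the first $\ell'-2$ rows would force $c(x)P(x)=0$ for a nonzero $c$ of degree $\le \ell'-3$ is also correct, and it avoids the paper's case split on $\alpha_u = 0$. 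The ``technical obstacle'' you flag at the end --- that iterated truncation-after-multiplication equals one truncation after multiplying by the whole product $P(z)$ --- resolves cleanly: at each step the discarded piece consists of nonnegative powers only, and multiplying a nonnegative-power polynomial by a further polynomial $(z-\beta)^{d'}$ again yields only nonnegative powers, so it never leaks back into the negative-power part extracted at the end. So there is no gap; your route is a legitimate and arguably more transparent alternative to the paper's coefficient-level verification.
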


\begin{proof}
 Note first that the first row of the equality~(\ref{eq3.2.2}) is the same as $m'_1+\dots+m'_{q}=0$,
 which is assumed in Lemma~\ref{lem3.5}.
 Hence the equality~(\ref{eq3.2.2}) is equivalent to the equality
 \begin{equation}\label{eq3.7}
	   \sum_{u=1}^q A_{d-1, 1}^{d'_u+1}\left(\alpha_u\right)
		\begin{pmatrix} m'_{u} \\ m_{u,1}\\ \vdots \\ m_{u,d'_u} \end{pmatrix}
	   = 0.
 \end{equation}
 Moreover since
 \begin{align*}
  \sum_{u=1}^q A_{d-1, 1}^{d'_u+1}\left(\alpha_u\right)
	&\begin{pmatrix} m'_{u} \\ m_{u,1}\\ \vdots \\ m_{u,d'_u} \end{pmatrix}
  = \sum_{u=1}^q \left\{ 
	m'_u \begin{pmatrix} \alpha_u \\ \alpha_u^2 \\ \vdots \\ \alpha_u^{d-2} \end{pmatrix} +
	A_{d-1, 1}^{d'_u+1,1}\left(\alpha_u\right)
	\begin{pmatrix} m_{u,1}\\ \vdots \\ m_{u,d'_u} \end{pmatrix}
    \right\}\\
  &= \sum_{u=1}^{q} X_{d-2} A_{d-2}^{d'_u}\left(\alpha_u\right) \left(X_{d'_u}\right)^{-1}
	\begin{pmatrix} m_{u,1}\\ \vdots \\ m_{u,d'_u} \end{pmatrix}
	+ \begin{pmatrix}
	    \alpha_1&  \dots & \alpha_{q} \\
	    \alpha_1^2&  \dots & \alpha_{q}^2 \\
	    \vdots & \ddots & \vdots\\
	    \alpha_1^{d-2} & \dots & \alpha_{q}^{d-2}
	   \end{pmatrix}
	   \begin{pmatrix}
	    m'_1 \\ \vdots \\ m'_{q}
	   \end{pmatrix}
 \end{align*}
 by Proposition~\ref{prop2.3},
 the equality~(\ref{eq3.7}) is equivalent to
 \begin{equation}\label{eq3.8}
   \sum_{u=1}^{q} A_{d-2}^{d'_u}\left(\alpha_u\right) \left(X_{d'_u}\right)^{-1}
	\begin{pmatrix} m_{u,1}\\ \vdots \\ m_{u,d'_u} \end{pmatrix}
	+ (X_{d-2})^{-1} \begin{pmatrix}
	    \alpha_1&  \dots & \alpha_{q} \\
	    \vdots & \ddots & \vdots\\
	    \alpha_1^{d-2} & \dots & \alpha_{q}^{d-2}
	   \end{pmatrix}
	   \begin{pmatrix}
	    m'_1 \\ \vdots \\ m'_{q}
	   \end{pmatrix}
   = 0.
 \end{equation}

To proceed further the proof of Lemma~\ref{lem3.5}, 
we make use of the following lemma.

\begin{lemma}\label{lem3.6}
 Let $\ell', q, d'_1,\dots,d'_q; \alpha_1, \dots, \alpha_q$ be as in Lemma~{\rm \ref{lem3.5}}.
 Then the linear map
 \begin{equation}\label{eq3.9}
   A_{\ell'-2}^{d-2}(0) \prod_{u=1}^{q}\left( -\alpha_u I_{d-2} +N_{d-2} \right)^{d'_u} : 
   \mathbb{C}^{d-2} \to \mathbb{C}^{\ell'-2}
 \end{equation}
 is surjective. Moreover the basis of its kernel consists of the column vectors of 
 $A_{d-2}^{d'_1}\left(\alpha_1\right)$, $\dots$, $A_{d-2}^{d'_{q}}\left(\alpha_{q}\right)$.
\end{lemma}

\begin{proof}[Proof of Lemma~{\rm \ref{lem3.6}}]
 We first show the surjectivity of the map~(\ref{eq3.9}). 
 Note that $A_{\ell'-2}^{d-2}(0) = \left( I_{\ell' - 2}, O \right)$,
 where $O$ is the zero matrix of size $(\ell'-2, d-\ell')$.
 Hence the map $A_{\ell'-2}^{d-2}(0) : \mathbb{C}^{d-2} \to \mathbb{C}^{\ell'-2}$ is surjective.
 If $\alpha_u \ne 0$,
 then the map $-\alpha_u I_{d-2} +N_{d-2} : \mathbb{C}^{d-2} \to \mathbb{C}^{d-2}$ is bijective.
 If, for instance, $\alpha_1 = 0$, then $\alpha_u \ne 0$ holds for $2 \leq u \leq q$, and
 the map $A_{\ell'-2}^{d-2}(0) (N_{d-2})^{d'_1} = \left( O_1, I_{\ell' - 2}, O_2 \right) :
 \mathbb{C}^{d-2} \to \mathbb{C}^{\ell'-2}$ is surjective, where $O_1$ and $O_2$ are the zero matrices
 of sizes $(\ell' - 2, d'_1)$ and $(\ell' - 2, d-\ell' - d'_1) = (\ell' - 2, \sum_{u=2}^{q}d'_u)$
 respectively.
 Hence in every case, the map~(\ref{eq3.9}) is surjective.

 Since the square matrix
 $\left( A_{d-\ell'}^{d'_1}\left(\alpha_1\right), \dots, A_{d-\ell'}^{d'_{q}}\left(\alpha_{q}\right) \right)$
 is invertible by Proposition~\ref{prop2.4},
 the column vectors of 
 $A_{d-2}^{d'_1}\left(\alpha_1\right), \dots, A_{d-2}^{d'_{q}}\left(\alpha_{q}\right)$
 are linearly independent since $\ell \geq 2$, and span a $\sum_{u=1}^{q}d'_u  = d - \ell'$ dimensional linear subspace
 in $\mathbb{C}^{d-2}$.
 On the other hand, since the map~(\ref{eq3.9}) is surjective, its kernel is a $(d-2) - (\ell' - 2) = d-\ell'$
 dimensional linear subspace in $\mathbb{C}^{d-2}$.
 Hence to complete the proof of Lemma~\ref{lem3.6},
 we only need to check the equality
 \[
    A_{\ell'-2}^{d-2}(0) \left( \prod_{u=1}^{q}\left( -\alpha_u I_{d-2} +N_{d-2} \right)^{d'_u} \right)
    A_{d-2}^{d'_v}\left(\alpha_v\right) = O
 \]
 for every $1 \leq v \leq q$.
 Without loss of generality, it suffices to show
 \begin{equation}\label{eq3.10}
    A_{\ell'-2}^{d-2}(0) \left( \prod_{u=1}^{q}\left( -\alpha_u I_{d-2} +N_{d-2} \right)^{d'_u} \right)
    A_{d-2}^{d'_1}\left(\alpha_1\right) = O.
 \end{equation}

 Since $\left( -\alpha_1 I_{d-2} +N_{d-2} \right)^{d'_1}
 = \sum_{h=0}^{d'_1}\binom{d'_1}{h}(-\alpha_1)^{d'_1-h}(N_{d-2})^h$,
 the $(i,i+h)$-th entry of \\
 $\left( -\alpha_1 I_{d-2} +N_{d-2} \right)^{d'_1}$ is 
 $\binom{d'_1}{h}(-\alpha_1)^{d'_1-h}$ for $0 \leq h \leq \min\{ d'_1, d-2-i \}$,
 and its other entries are zero.
 Hence for $1 \leq i \leq d-2-d'_1$ and $1 \leq j \leq d'_1$, we have
 \begin{align*}
  &\left[ \text{the }
  (i, j)\text{-th entry of }\left( -\alpha_1 I_{d-2} +N_{d-2} \right)^{d'_1} A_{d-2}^{d'_1}\left(\alpha_1\right)
  \right]\\
  =\ 
  &\sum_{h=0}^{d'_1}\left(
	\left[ \text{the } (i, i+h)\text{-th entry of } \left( -\alpha_1 I_{d-2} +N_{d-2} \right)^{d'_1} \right] \right.\\
	&\hspace*{100pt}
	\left.\!\cdot\!
	\left[ \text{the } (i+h, j)\text{-th entry of } A_{d-2}^{d'_1}\left(\alpha_1\right) \right]
	\right)\\
  =\ 
  &\sum_{h=0}^{d'_1} \left\{
	\binom{d'_1}{h}(-\alpha_1)^{d'_1-h} \cdot \binom{i+h-1}{j-1}\alpha_1^{i+h-j}
	\right\}\\
  =\ 
  &(-\alpha_1)^{d'_1 + i - j}\sum_{h=0}^{d'_1}\binom{d'_1}{h}\binom{i+h-1}{j-1} (-1)^{i+h -j}\\
  =\ 
  &(-\alpha_1)^{d'_1 + i - j} \left.\left[ \frac{1}{(j-1)!}\left( \frac{d}{dx} \right)^{j-1}
	\left\{ \sum_{h=0}^{d'_1}\binom{d'_1}{h} x^{i+h-1} \right\} \right]\right|_{x=-1}\\
  =\ 
  &(-\alpha_1)^{d'_1 + i - j} \left.\left[ \frac{1}{(j-1)!}\left( \frac{d}{dx} \right)^{j-1}
	\left\{ x^{i-1}(x+1)^{d'_1} \right\} \right]\right|_{x=-1}\\
  =\ &0
 \end{align*}
 since $j-1 < d'_1$.
 On the other hand, since $\prod_{u=2}^{q}\left( -\alpha_u I_{d-2} +N_{d-2} \right)^{d'_u}$ is a polynomial
 of $N_{d-2}$ with degree $d'_2+\dots+d'_q= d-\ell' - d'_1$, 
 its $(i,j)$-th entry is zero for $j-i > d-\ell' - d'_1$.
 Hence for $1 \leq i \leq \ell' - 2$ and $d-2-d'_1 < j \leq d-2$, we have $j-i > d-\ell' - d'_1$,
 which implies that the $(i,j)$-th entry of
 $A_{\ell'-2}^{d-2}(0) \prod_{u=2}^{q}\left( -\alpha_u I_{d-2} +N_{d-2} \right)^{d'_u}$ is zero.

 To summarize, for every $1 \leq i \leq \ell'-2$ and $1 \leq j \leq d'_1$, we have
 \[
    \left[ \text{the }
  (k, j)\text{-th entry of } \left( -\alpha_1 I_{d-2} +N_{d-2} \right)^{d'_1} A_{d-2}^{d'_1}\left(\alpha_1\right)
  \right] = 0
 \]
 for $1 \leq k \leq d-2-d'_1$, and also have
 \[
    \left[ \text{the }
  (i, k)\text{-th entry of } A_{\ell'-2}^{d-2}(0) \prod_{u=2}^{q}\left( -\alpha_u I_{d-2} +N_{d-2} \right)^{d'_u}
  \right] = 0
 \]
 for $d-2-d'_1 < k \leq d-2$.
 We therefore have the equality~(\ref{eq3.10}), which completes the proof of Lemma~\ref{lem3.6}.
\end{proof}

We return to the proof of Lemma~\ref{lem3.5}.

Suppose first the condition~(1) in Lemma~\ref{lem3.5}.
Then there exist $m_{u,k} \in \mathbb{C}$ for $1 \leq u \leq q$ and $1 \leq k\leq d'_u$
such that the equality~(\ref{eq3.8}) holds.
Multiplying $A_{\ell'-2}^{d-2}(0) \prod_{u=1}^{q}\left( -\alpha_u I_{d-2} +N_{d-2} \right)^{d'_u}$
to both sides of the equality~(\ref{eq3.8}) from the left,
we have the equality~(\ref{eq3.6.2}) by Lemma~\ref{lem3.6}.

Suppose next the condition~(2) in Lemma~\ref{lem3.5}.
Then the vector
\[
	    \left(X_{d-2}\right)^{-1}
	   \begin{pmatrix}
	    \alpha_1&  \dots & \alpha_{q} \\
	    \vdots & \ddots & \vdots\\
	    \alpha_1^{d-2} & \dots & \alpha_{q}^{d-2}
	   \end{pmatrix}
	   \begin{pmatrix}
	    m'_1 \\ \vdots \\ m'_{q}
	   \end{pmatrix}
\]
is contained in the kernel of the linear map 
$A_{\ell'-2}^{d-2}(0) \prod_{u=1}^{q}\left( -\alpha_u I_{d-2} +N_{d-2} \right)^{d'_u}$.
Hence by Lemma~\ref{lem3.6}, 
there exist $m'_{u,k} \in \mathbb{C}$ for $1 \leq u \leq q$ and $1 \leq k\leq d'_u$
such that the equality
\begin{equation}\label{eq3.11}
	    \left(X_{d-2}\right)^{-1}
	   \begin{pmatrix}
	    \alpha_1&  \dots & \alpha_{q} \\
	    \vdots & \ddots & \vdots\\
	    \alpha_1^{d-2} & \dots & \alpha_{q}^{d-2}
	   \end{pmatrix}
	   \begin{pmatrix}
	    m'_1 \\ \vdots \\ m'_{q}
	   \end{pmatrix}
   = \sum_{u=1}^{q} A_{d-2}^{d'_u}\left(\alpha_u\right) 
	\begin{pmatrix} m'_{u,1}\\ \vdots \\ m'_{u,d'_u} \end{pmatrix}
\end{equation}
holds.
Putting $m_{u,k} := -km'_{u,k}$ for $1 \leq u \leq q$ and $1 \leq k\leq d'_u$,
we find that the equality~(\ref{eq3.11}) is the same as the equality~(\ref{eq3.8}),
which is also equivalent to~(\ref{eq3.2.2}).
Hence the implication $(2) \Rightarrow (1)$ is proved,
which completes the proof of Lemma~\ref{lem3.5}.
\end{proof}

\vspace{10pt}

In the rest of the proof of Main Theorem, we continue to consider an analogue 
of each step in Sections~4 and~6 in~\cite{sugi1} and in Section~5 in~\cite{sugi2},
in the case of having multiple fixed points.
However in our case, its proofs are much complicated, compared with the case of 
having no multiple fixed points.

Based on the propositions above, we make the following definition:

\begin{definition}
 We put
 \begin{align*}
  \widetilde{T}(m) &:= \left\{\left. (\zeta_1, \dots, \zeta_{\ell}) \in \mathbb{C}^{\ell} \ \right| \ 
	\text{The equality~(\ref{eq3.6}) holds}  \right\},\\
  \widetilde{S}(m) &:= \left\{\left. (\zeta_1, \dots, \zeta_{\ell}) \in \widetilde{T}(m) \ \right| \ 
	\zeta_1, \dots, \zeta_{\ell} \text{ are mutually distinct}\right\},\\
  \widetilde{B}(m) &:= \widetilde{T}(m) \setminus \widetilde{S}(m),\\
  \mathfrak{S}(m) &:= \left\{ \sigma \in \mathfrak{S}_{\ell} \mid (d_{\sigma(i)}, m_{\sigma(i)}) = (d_i, m_i) \ 
		\text{ for every } 1 \leq i \leq \ell \right\} \qquad \text{and}\\
  \mathfrak{I}'(m) &:= \left\{ \left\{I_1,\ldots,I_q\right\}\ \left| \ 
	 \begin{matrix}
	   q \ge 1,\ \ \emptyset \ne I_u \subseteq \{1,\ldots, \ell \} \textrm{ for every } 1\le u\le q,\\
	   I_1 \amalg \cdots \amalg I_q = \{1,\ldots, \ell \},\\
	   \sum_{i \in I_u} m_i = 0 \textrm{ for every } 1\le u\le q
	 \end{matrix}
	 \right.\right\},
 \end{align*}
 where $\mathfrak{S}_{\ell}$ denotes the $\ell$-th symmetric group,
 and $I_1 \amalg \cdots \amalg I_q$ denotes the disjoint union of $I_1,\dots,I_q$.
 Moreover we put
 \begin{align*}
   \widetilde{E}(\mathbb{I}) &:= \left\{\left. (\zeta_1, \dots, \zeta_{\ell}) \in \mathbb{C}^{\ell} \ \right| \
	i,j \in I \in \mathbb{I} \Rightarrow \zeta_i = \zeta_j \right\} 
	\quad \text{ for } \quad \mathbb{I} \in \mathfrak{I}'(m).
 \end{align*}
\end{definition}

Combining Propositions~\ref{prop3.1},~\ref{prop3.4} and~\ref{prop3.5},
we obviously have the following:

\begin{proposition}\label{prop3.9}
 The equality
 \[
    \widetilde{S}(m) = \left\{ (\zeta_1, \dots, \zeta_{\ell}) \in \mathbb{C}^{\ell} \ \left| \ \begin{matrix}
	\text{There exists a unique non-zero complex number } \rho \text{ such that} \\
	f(z) = z+\rho (z - \zeta_1)^{d_1} \cdots (z - \zeta_{\ell})^{d_{\ell}} \in {\rm Poly}_d(d_1,\dots,d_{\ell}) \text{ holds}\\
	\text{and the equalities } \iota(f,\zeta_i) = m_i \text{ hold for } 1 \leq i \leq \ell.
    \end{matrix}\right.\right\}
 \]
 holds.
\end{proposition}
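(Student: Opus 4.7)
The plan is to verify both inclusions directly, using the chain of equivalences supplied by Propositions~\ref{prop3.1}, \ref{prop3.4}, and~\ref{prop3.5}. Let $R$ denote the right-hand side of the claimed equality.

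For the inclusion $\widetilde{S}(m) \subseteq R$, I would start with $(\zeta_1,\dots,\zeta_\ell) \in \widetilde{S}(m)$, so the $\zeta_i$ are mutually distinct and the equality~(\ref{eq3.6}) holds. Proposition~\ref{prop3.5} then yields auxiliary constants $m_{i,k} \in \mathbb{C}$ (for $1\le i\le\ell$ and $1\le k\le d_i-1$) satisfying equality~(\ref{eq3.2}). Since the standing assumption of this section is that $m \ne (0,\dots,0)$, Proposition~\ref{prop3.4} applies and delivers a unique non-zero $\rho \in \mathbb{C}$ for which~(\ref{eq3.1}) holds. Because the $\zeta_i$ are mutually distinct and $\rho \ne 0$, the polynomial $f(z) = z + \rho (z-\zeta_1)^{d_1}\cdots(z-\zeta_\ell)^{d_\ell}$ lies in ${\rm Poly}_d(d_1,\dots,d_\ell)$, and Proposition~\ref{prop3.1} converts~(\ref{eq3.1}) into the desired equalities $\iota(f,\zeta_i) = m_i$ for $1 \le i \le \ell$. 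The uniqueness of $\rho$ required by $R$ is precisely the uniqueness clause in Proposition~\ref{prop3.4}.

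For the reverse inclusion $R \subseteq \widetilde{S}(m)$, I would take $(\zeta_1,\dots,\zeta_\ell)$ in the right-hand set. The hypothesis $f \in {\rm Poly}_d(d_1,\dots,d_\ell)$ already forces the $\zeta_i$ to be mutually distinct, so Proposition~\ref{prop3.1} applies and provides $m_{i,k} \in \mathbb{C}$ such that~(\ref{eq3.1}) holds. Restricting this vector identity to its first $d-1$ rows produces exactly~(\ref{eq3.2}), and Proposition~\ref{prop3.5} then translates the existence of such $m_{i,k}$ into~(\ref{eq3.6}). Hence $(\zeta_1,\dots,\zeta_\ell) \in \widetilde{T}(m)$, and in fact in $\widetilde{S}(m)$ since the $\zeta_i$ are distinct.

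There is no genuine obstacle here, as the proposition is a straightforward reassembly of the three preceding results. The only subtlety I would watch for is to confirm that the standing assumption $m \ne (0,\dots,0)$ is invoked at the correct moment, namely when applying Proposition~\ref{prop3.4} to guarantee a \emph{non-zero} $\rho$, so that the resulting $f$ genuinely belongs to ${\rm Poly}_d(d_1,\dots,d_\ell)$ rather than to a degenerate larger class.
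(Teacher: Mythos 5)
Your two inclusions are organized correctly and use the same three results (Propositions~\ref{prop3.1},~\ref{prop3.4},~\ref{prop3.5}) that the paper cites when it asserts Proposition~\ref{prop3.9} ``obviously'' follows, so the overall route matches the paper's.

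The one place you are imprecise is the final sentence of the first inclusion, where you say the uniqueness of $\rho$ required by the right-hand set ``is precisely the uniqueness clause in Proposition~\ref{prop3.4}.'' That is not literally true: Proposition~\ref{prop3.4} asserts uniqueness of $\rho$ \emph{for a fixed choice of auxiliary constants} $m_{i,k}$, whereas the right-hand set asserts uniqueness of $\rho$ with only the $m_i$ prescribed, and $\iota(f,\zeta_i)=m_i$ imposed. A competing $\rho'$ could, a priori, satisfy $\iota(f',\zeta_i)=m_i$ with a different auxiliary family $m'_{i,k}=\iota_k(f',\zeta_i)$, and Proposition~\ref{prop3.4} alone does not preclude this. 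The gap closes quickly: both $(m_{i,k})$ and $(m'_{i,k})$ must satisfy~(\ref{eq3.2}), and the columns of $\left(A_{d-1}^{d_1,1}(\zeta_1),\dots,A_{d-1}^{d_\ell,1}(\zeta_\ell)\right)$ are linearly independent (they are columns of a matrix whose relevant square truncation is invertible by Proposition~\ref{prop2.4}, via Proposition~\ref{prop2.3}), so~(\ref{eq3.2}) with the $m_i$ fixed determines the $m_{i,k}$ uniquely; only then does Proposition~\ref{prop3.4} yield $\rho=\rho'$. Alternatively one can observe directly that $\iota(f,\zeta_i)=-\rho^{-1}\operatorname{Res}_{\zeta_i}\bigl(\prod_j(z-\zeta_j)^{-d_j}\bigr)$, so that $\rho$ is recovered from any $m_i\ne 0$. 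Either one-line addition makes your uniqueness claim watertight; the rest of the argument is sound.
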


The group $\mathrm{Aut}(\mathbb{C})$ naturally acts on $\mathbb{C}^{\ell}$ by 
$\gamma \cdot (\zeta_1,\dots,\zeta_{\ell}) := (\gamma(\zeta_1),\dots,\gamma(\zeta_{\ell}))$.
By Proposition~\ref{prop3.9}, we obviously have the following:

\begin{proposition}\label{prop3.10}
 Let $P: \mathrm{Poly}_d \to \mathrm{MP}_d$ be the natural projection.
 Then
 \begin{enumerate}
  \item we can define the surjection
	$\widetilde{\pi}(m) : \widetilde{S}(m) \to \left(\Phi_d(d_1,\dots,d_{\ell})\circ P\right)^{-1}(\overline{m})$ by
	\[
	   (\zeta_1, \dots, \zeta_{\ell}) \mapsto 
	   f(z):= z + \rho(z-\zeta_1)^{d_1}\cdots(z-\zeta_{\ell})^{d_{\ell}},
	\]
	where $\rho$ is defined to be a unique non-zero complex number 
	such that the equalities $\iota(f,\zeta_i) = m_i$ hold for $1 \leq i \leq \ell$.
  \item $\widetilde{S}(m)$ is invariant under the action of $\mathrm{Aut}(\mathbb{C})$ on $\mathbb{C}^{\ell}$.
  \item The actions of $\mathrm{Aut}(\mathbb{C})$ 
	on $\widetilde{S}(m)$ and on $\left(\Phi_d(d_1,\dots,d_{\ell})\circ P\right)^{-1}(\overline{m})$
	commute with the map $\widetilde{\pi}(m)$.
  \item The group $\mathfrak{S}(m)$ acts on $\widetilde{S}(m)$ by the permutation of coordinates.
	Moreover for $\zeta, \zeta' \in \widetilde{S}(m)$, the equality $\widetilde{\pi}(m)(\zeta) = \widetilde{\pi}(m)(\zeta')$ holds if and only if there exists $\sigma \in \mathfrak{S}(m)$ such that $\sigma \cdot \zeta = \zeta'$.
 \end{enumerate}
\end{proposition}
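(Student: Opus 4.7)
The plan is to deduce each of the four statements almost directly from the set-theoretic description of $\widetilde{S}(m)$ given by Proposition~\ref{prop3.9} together with the conformal invariance of the holomorphic index noted after Definition~\ref{def1.1}. Nothing analytically deep is required; the task is careful bookkeeping.

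For (1), given $(\zeta_1,\dots,\zeta_\ell) \in \widetilde{S}(m)$, Proposition~\ref{prop3.9} supplies a unique $\rho \ne 0$ making $f(z) = z + \rho \prod_i (z-\zeta_i)^{d_i}$ lie in $\mathrm{Poly}_d(d_1,\dots,d_\ell)$ with $\iota(f,\zeta_i) = m_i$, so $\widetilde{\pi}(m)$ is well defined and lands in $(\Phi_d(d_1,\dots,d_\ell) \circ P)^{-1}(\overline{m})$. For surjectivity, I would pick any representative $f$ of a class in that fiber, enumerate its $\ell$ fixed points so that the $i$-th one has multiplicity $d_i$ and index $m_i$ (possible because the unordered fixed-point data equal $\overline{m}$), and invoke the reverse direction of Proposition~\ref{prop3.9} to place the resulting tuple in $\widetilde{S}(m)$.

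For (2) and (3), a direct computation with $\gamma(z) = az+b$ gives $\gamma f \gamma^{-1}(z) = z + \rho a^{1-d} \prod_i (z - \gamma(\zeta_i))^{d_i}$, which is again in $\mathrm{Poly}_d(d_1,\dots,d_\ell)$, and conformal invariance of the index yields $\iota(\gamma f \gamma^{-1}, \gamma(\zeta_i)) = \iota(f,\zeta_i) = m_i$. Hence $\gamma \cdot (\zeta_1,\dots,\zeta_\ell) \in \widetilde{S}(m)$, which proves (2), and the same computation shows $\widetilde{\pi}(m)(\gamma \cdot \zeta) = \gamma \cdot \widetilde{\pi}(m)(\zeta)$, which is (3).

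For (4), the bulk of the content is the characterization of fibers of $\widetilde{\pi}(m)$. To see the action is well defined, I note that for $\sigma \in \mathfrak{S}(m)$ the identity $d_{\sigma(i)} = d_i$ forces $\prod_i (z-\zeta_{\sigma(i)})^{d_i} = \prod_j (z-\zeta_j)^{d_j}$, so the same polynomial $f$ arises, while $m_{\sigma(i)} = m_i$ shows the reindexed tuple still satisfies the condition defining $\widetilde{S}(m)$; consequently $\widetilde{\pi}(m)(\sigma \cdot \zeta) = \widetilde{\pi}(m)(\zeta)$. The one slightly subtler step—and the only place I anticipate having to be careful—is the converse: if $\widetilde{\pi}(m)(\zeta) = \widetilde{\pi}(m)(\zeta') = f$, both tuples enumerate $\mathrm{Fix}(f)$, so there is a unique $\sigma \in \mathfrak{S}_\ell$ with $\zeta'_i = \zeta_{\sigma(i)}$, and reading off multiplicities and indices at each fixed point of $f$ forces $d_{\sigma(i)} = \mathrm{mult}(f,\zeta'_i) = d_i$ and $m_{\sigma(i)} = \iota(f,\zeta'_i) = m_i$, placing $\sigma$ in $\mathfrak{S}(m)$. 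I do not foresee a substantive obstacle anywhere; the whole proposition is an unpacking of Proposition~\ref{prop3.9} combined with the equivariance of $f \mapsto \iota(f,\cdot)$ under affine conjugation and under relabeling of fixed points.
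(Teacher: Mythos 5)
Your proof is correct and follows the same route the paper intends: the paper simply declares Proposition~3.10 to be an obvious consequence of Proposition~\ref{prop3.9} together with the conformal invariance of the holomorphic index, and you have unpacked exactly that chain of deductions, including the explicit conjugation formula $\gamma f\gamma^{-1}(z)=z+\rho a^{1-d}\prod_i(z-\gamma(\zeta_i))^{d_i}$ and the fiber characterization in (4). The only cosmetic slip is the phrase ``pick any representative $f$ of a class in that fiber'' in part (1): the fiber $(\Phi_d(d_1,\dots,d_\ell)\circ P)^{-1}(\overline m)$ already consists of polynomials in $\mathrm{Poly}_d(d_1,\dots,d_\ell)$, not of conjugacy classes, but your subsequent argument treats $f$ as an actual polynomial, so nothing is affected.
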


Note that the action of $\mathfrak{S}(m)$ on $\widetilde{S}(m)$ and 
the action of $\mathrm{Aut}(\mathbb{C})$ on $\widetilde{S}(m)$ commute.
Hence the action of $\mathfrak{S}(m)$ on $\widetilde{S}(m)$ naturally induces 
the action of $\mathfrak{S}(m)$ on $\widetilde{S}(m) / \mathrm{Aut}(\mathbb{C})$.
By Proposition~\ref{prop3.10}, we naturally have the following:

\begin{proposition}\label{prop3.11}
 Under the same notation as in Proposition~{\rm \ref{prop3.10}}, 
 the map $\widetilde{\pi}(m)$ induces the surjection
 $\widetilde{\pi}'(m) : \widetilde{S}(m) / \mathrm{Aut}(\mathbb{C}) \to \Phi_d(d_1,\dots,d_{\ell})^{-1}(\overline{m})$,
 which also induces the bijection 
 $\overline{\widetilde{\pi}'(m)} : \left(\widetilde{S}(m) / \mathrm{Aut}(\mathbb{C})\right) / \mathfrak{S}(m)
 \cong \Phi_d(d_1,\dots,d_{\ell})^{-1}(\overline{m})$.
\end{proposition}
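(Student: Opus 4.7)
The plan is to chase the properties established in Proposition~\ref{prop3.10} through two successive quotient operations: first by $\mathrm{Aut}(\mathbb{C})$ to produce $\widetilde{\pi}'(m)$, then by $\mathfrak{S}(m)$ to upgrade surjectivity to bijectivity. I do not anticipate a serious obstacle, since the nontrivial content---the existence of $\widetilde{\pi}(m)$, its $\mathrm{Aut}(\mathbb{C})$-equivariance, and the identification of its fibers with $\mathfrak{S}(m)$-orbits---has already been packaged into Proposition~\ref{prop3.10}; what remains is a diagram chase.

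For the construction of $\widetilde{\pi}'(m)$, I would post-compose $\widetilde{\pi}(m)$ with the natural projection $(\Phi_d(d_1,\dots,d_\ell)\circ P)^{-1}(\overline{m}) \to \Phi_d(d_1,\dots,d_\ell)^{-1}(\overline{m})$ induced by $P$. Since the $P$-preimage of this fiber in $\mathrm{MP}_d$ is exactly $(\Phi_d(d_1,\dots,d_\ell)\circ P)^{-1}(\overline{m})$, the composition remains surjective. By Proposition~\ref{prop3.10}(3), $\widetilde{\pi}(m)$ intertwines the $\mathrm{Aut}(\mathbb{C})$-actions on source and target, and passage to $\mathrm{MP}_d$ collapses each $\mathrm{Aut}(\mathbb{C})$-orbit to a single point; hence the composition is constant on $\mathrm{Aut}(\mathbb{C})$-orbits of $\widetilde{S}(m)$ and descends to the required surjection $\widetilde{\pi}'(m): \widetilde{S}(m)/\mathrm{Aut}(\mathbb{C}) \to \Phi_d(d_1,\dots,d_\ell)^{-1}(\overline{m})$.

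For the bijection, I would analyze when two classes $\widetilde{\zeta}, \widetilde{\zeta}' \in \widetilde{S}(m)/\mathrm{Aut}(\mathbb{C})$ with representatives $\zeta,\zeta'$ satisfy $\widetilde{\pi}'(m)(\widetilde{\zeta}) = \widetilde{\pi}'(m)(\widetilde{\zeta}')$. Writing $f := \widetilde{\pi}(m)(\zeta)$ and $f' := \widetilde{\pi}(m)(\zeta')$, this equality amounts to the existence of $\gamma \in \mathrm{Aut}(\mathbb{C})$ with $f' = \gamma \cdot f$; by equivariance, $\widetilde{\pi}(m)(\gamma \cdot \zeta) = \widetilde{\pi}(m)(\zeta')$, so Proposition~\ref{prop3.10}(4) yields $\sigma \in \mathfrak{S}(m)$ with $\sigma \cdot (\gamma \cdot \zeta) = \zeta'$. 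Because the $\mathfrak{S}(m)$-action (permutation of entries) and the $\mathrm{Aut}(\mathbb{C})$-action (diagonal affine substitution) on $\widetilde{S}(m)$ commute, this translates to the equality $\widetilde{\sigma \cdot \zeta} = \widetilde{\zeta}'$ in the quotient. The reverse implication is immediate from Proposition~\ref{prop3.10}(4) combined with equivariance. Consequently the fibers of $\widetilde{\pi}'(m)$ are exactly the orbits of the induced $\mathfrak{S}(m)$-action on $\widetilde{S}(m)/\mathrm{Aut}(\mathbb{C})$---an action which is well defined precisely by the commutation just invoked---yielding the claimed bijection $\overline{\widetilde{\pi}'(m)}$.
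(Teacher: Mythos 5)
Your proof is correct and takes the same route the paper takes implicitly: the paper states Proposition~\ref{prop3.11} without an explicit proof, citing only Proposition~\ref{prop3.10} together with the (just-noted) commutativity of the $\mathrm{Aut}(\mathbb{C})$- and $\mathfrak{S}(m)$-actions on $\widetilde{S}(m)$, and your diagram chase is precisely the routine argument the paper leaves to the reader.
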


On the other hand, we have the following for $\widetilde{B}(m)$:

\begin{proposition}\label{prop3.12}
 We have
 \[
    \widetilde{B}(m) =  \bigcup_{\mathbb{I} \in \mathfrak{I}'(m)} \widetilde{E}(\mathbb{I}).
 \]
\end{proposition}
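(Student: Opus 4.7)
The plan is to verify both inclusions separately. The inclusion $\bigcup_{\mathbb{I} \in \mathfrak{I}'(m)} \widetilde{E}(\mathbb{I}) \subseteq \widetilde{B}(m)$ is a direct substitution into~(\ref{eq3.6}); the reverse inclusion is the main content and reduces, via Lemma~\ref{lem3.5} together with the invertibility supplied by Proposition~\ref{prop2.4}, to a clean rank argument.

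First I would handle $\supseteq$. Fix $\mathbb{I} = \{I_1,\ldots,I_q\} \in \mathfrak{I}'(m)$ and $(\zeta_1,\ldots,\zeta_\ell) \in \widetilde{E}(\mathbb{I})$, and let $\alpha_u$ be the common value of $\zeta_i$ for $i \in I_u$. Grouping commuting factors, $\prod_{i=1}^\ell(-\zeta_i I_{d-2} + N_{d-2})^{d_i-1}$ collapses to $\prod_{u=1}^q(-\alpha_u I_{d-2} + N_{d-2})^{d'_u}$ with $d'_u := \sum_{i\in I_u}(d_i - 1)$, while
\[
 \begin{pmatrix} \zeta_1 & \cdots & \zeta_\ell \\ \vdots & \ddots & \vdots \\ \zeta_1^{d-2} & \cdots & \zeta_\ell^{d-2} \end{pmatrix}\begin{pmatrix} m_1 \\ \vdots \\ m_\ell \end{pmatrix} = \sum_{u=1}^q \Bigl(\sum_{i\in I_u} m_i\Bigr)\begin{pmatrix}\alpha_u\\ \vdots \\ \alpha_u^{d-2}\end{pmatrix} = 0
\]
by the defining condition $\sum_{i\in I_u}m_i = 0$. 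Hence the left-hand side of~(\ref{eq3.6}) vanishes, so $(\zeta_1,\ldots,\zeta_\ell) \in \widetilde{T}(m)$. Moreover, since $m \ne (0,\ldots,0)$, picking any $i_0$ with $m_{i_0}\ne 0$, the block $I_u$ containing $i_0$ cannot be a singleton (else $m_{i_0} = \sum_{j \in I_u} m_j = 0$); thus $|I_u|\ge 2$ forces a coincidence $\zeta_i = \zeta_j$ with $i \ne j$, so $(\zeta_1,\ldots,\zeta_\ell)\notin \widetilde{S}(m)$ and therefore $(\zeta_1,\ldots,\zeta_\ell)\in \widetilde{B}(m)$.

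For $\subseteq$, start from $(\zeta_1,\ldots,\zeta_\ell) \in \widetilde{B}(m)$, let $\alpha_1,\ldots,\alpha_q$ be the distinct values among $\zeta_1, \ldots, \zeta_\ell$, and $I_u := \{i : \zeta_i = \alpha_u\}$, so $\mathbb{I} := \{I_1,\ldots,I_q\}$ is a partition of $\{1,\ldots,\ell\}$ with $q < \ell$. Put $d'_u := \sum_{i\in I_u}(d_i - 1)$ and $m'_u := \sum_{i\in I_u} m_i$; then $\sum_u d'_u = d - \ell$ and $\sum_u m'_u = 0$. Clearly $(\zeta_1,\ldots,\zeta_\ell) \in \widetilde{E}(\mathbb{I})$, so the task reduces to showing $m'_u = 0$ for every $u$. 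Since $(\zeta_1,\ldots,\zeta_\ell) \in \widetilde{T}(m)$, the equality~(\ref{eq3.6}) holds, and after the substitution $\zeta_i = \alpha_u$ (for $i \in I_u$) this reads exactly as condition~(2) of Lemma~\ref{lem3.5} with $\ell' = \ell$. Applying Lemma~\ref{lem3.5} then yields $m_{u,k}\in\mathbb{C}$ such that
\[
 \sum_{u=1}^q A_{d-1}^{d'_u+1}(\alpha_u)\begin{pmatrix} m'_u \\ m_{u,1} \\ \vdots \\ m_{u,d'_u}\end{pmatrix} = 0.
\]
Set $d' := \sum_u(d'_u + 1) = d - \ell + q$, which satisfies $d' \le d - 1$ since $q \le \ell - 1$. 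Restricting this vector identity to its top $d'$ coordinates replaces each $A_{d-1}^{d'_u+1}(\alpha_u)$ by its top $d'$ rows, which is $A_{d'}^{d'_u+1}(\alpha_u)$; the aggregated coefficient matrix $\bigl(A_{d'}^{d'_1+1}(\alpha_1),\ldots, A_{d'}^{d'_q+1}(\alpha_q)\bigr)$ is then a square $(d',d')$ matrix, invertible by Proposition~\ref{prop2.4} because $\alpha_1,\ldots,\alpha_q$ are distinct. Therefore every column vector $(m'_u, m_{u,1},\ldots, m_{u,d'_u})$ vanishes; in particular $m'_u = 0$, so $\mathbb{I}\in\mathfrak{I}'(m)$, as required. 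I do not foresee a serious obstacle: the substantive technical step is the conversion of~(\ref{eq3.6}) into the columnwise relation above, and that is already packaged in Lemma~\ref{lem3.5}; the subsequent killing of coefficients is routine once Proposition~\ref{prop2.4} is invoked.
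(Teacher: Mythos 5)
Your proof is correct and follows essentially the same route as the paper: for $\supseteq$ you show the Vandermonde-type part of the left-hand side of~(\ref{eq3.6}) vanishes by grouping terms over the blocks of $\mathbb{I}$ and then observe that $m\ne 0$ forces a non-singleton block; for $\subseteq$ you form the coincidence partition $\mathbb{I}(\zeta)$, rewrite~(\ref{eq3.6}) in the aggregated variables $(\alpha_u, d'_u, m'_u)$, invoke Lemma~\ref{lem3.5} with $\ell'=\ell$ to pass to the columnwise relation, and then use the invertibility of the square matrix $\bigl(A_{d-\ell+q}^{d'_1+1}(\alpha_1),\dots,A_{d-\ell+q}^{d'_q+1}(\alpha_q)\bigr)$ from Proposition~\ref{prop2.4} to conclude $m'_u=0$ for every $u$. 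This matches the paper's argument step for step.
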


\begin{proof}
 We prove 
 $\widetilde{B}(m) \supseteq \bigcup_{\mathbb{I} \in \mathfrak{I}'(m)} \widetilde{E}(\mathbb{I})$ first.
 For arbitrary $\mathbb{I} \in \mathfrak{I}'(m)$
 and $\zeta=(\zeta_1,\dots, \zeta_{\ell}) \in \widetilde{E}(\mathbb{I})$,
 we put $\mathbb{I} =: \left\{ I_1,\dots, I_q \right\}$ and
 $\alpha_u := \zeta_i$ for $i\in I_u$ for each $1 \leq u \leq q$.
 Then we have 
 $
  \sum_{i=1}^{\ell} m_i \zeta_i^k
	= \sum_{u=1}^q\sum_{i \in I_u} m_i \zeta_i^k
	= \sum_{u=1}^q\left( \sum_{i \in I_u} m_i \right) \alpha_u^k = 0$ for every $k \in \mathbb{N}$,
 which implies 
 \[
	   \begin{pmatrix}
	    \zeta_1&  \dots & \zeta_{\ell} \\
	    \vdots & \ddots & \vdots\\
	    \zeta_1^{d-2} & \dots & \zeta_{\ell}^{d-2}
	   \end{pmatrix}
	   \begin{pmatrix}
	    m_1 \\ \vdots \\ m_{\ell}
	   \end{pmatrix}
	=
	   \begin{pmatrix}
	    0 \\ \vdots \\ 0
	   \end{pmatrix},
 \]
 and hence $\zeta \in \widetilde{T}(m)$.
 Moreover since $(m_1,\dots,m_{\ell}) \ne (0,\dots,0)$, there exists $I \in \mathbb{I}$ with $\# I \geq 2$,
 which implies $\zeta \notin \widetilde{S}(m)$.
 We therefore have $\zeta \in \widetilde{B}(m)$
 for every $\zeta \in \bigcup_{\mathbb{I} \in \mathfrak{I}'(m)} \widetilde{E}(\mathbb{I})$,
 which assures
 $\widetilde{B}(m) \supseteq \bigcup_{\mathbb{I} \in \mathfrak{I}'(m)} \widetilde{E}(\mathbb{I})$.

We prove 
 $\widetilde{B}(m) \subseteq \bigcup_{\mathbb{I} \in \mathfrak{I}'(m)} \widetilde{E}(\mathbb{I})$
 next.
 For $\zeta=(\zeta_1,\dots, \zeta_{\ell}) \in \widetilde{B}(m)$, we put
 \[
    \mathbb{I}(\zeta) := \left\{ I \ \left| \ \begin{matrix}
	\emptyset \ne I \subseteq \{1, \dots, \ell \},\\
	i,j\in I \Rightarrow \zeta_i=\zeta_j,\\
	i \in I, j \in \{1, \dots, \ell \}\setminus I \Rightarrow \zeta_i \ne \zeta_j
    \end{matrix}\right.\right\},
 \]
 $\mathbb{I}(\zeta) =: \left\{ I_1,\dots, I_q \right\}$, $d'_u = \sum_{i \in I_u}\left( d_i - 1 \right)$ and 
 $\alpha_u := \zeta_i$ for $i\in I_u$ for each $1 \leq u \leq q$.
 Then by definition, we have $I_1 \amalg \dots \amalg I_q = \{1, \dots, \ell \}$, 
 $\sum_{u=1}^q d'_u
 = d - \ell$,
 the mutual distinctness of $\alpha_1, \dots, \alpha_{\ell}$
 and $\sum_{i=1}^{\ell} m_i \zeta_i^k = \sum_{u=1}^q\sum_{i \in I_u} m_i \zeta_i^k
	= \sum_{u=1}^q\left( \sum_{i \in I_u} m_i \right) \alpha_u^k$ for every $k \in \mathbb{N}$.
 Moreover since $\zeta \notin \widetilde{S}(m)$, we have $q < \ell$.
 Hence for $\zeta=(\zeta_1,\dots, \zeta_{\ell}) \in \widetilde{B}(m)$, we have
 \begin{align*}
   0 &= A_{\ell-2}^{d-2}(0) \left(\prod_{i=1}^{\ell}\left( -\zeta_i I_{d-2} +N_{d-2} \right)^{d_i - 1}\right)
	    \left(X_{d-2}\right)^{-1}
	   \begin{pmatrix}
	    \zeta_1&  \dots & \zeta_{\ell} \\
	    \vdots & \ddots & \vdots\\
	    \zeta_1^{d-2} & \dots & \zeta_{\ell}^{d-2}
	   \end{pmatrix}
	   \begin{pmatrix}
	    m_1 \\ \vdots \\ m_{\ell}
	   \end{pmatrix}\\
   &= A_{\ell-2}^{d-2}(0) \left(\prod_{u=1}^{q}\left( -\alpha_u I_{d-2} +N_{d-2} \right)^{d'_u}\right)
	    \left(X_{d-2}\right)^{-1}
	   \begin{pmatrix}
	    \alpha_1&  \dots & \alpha_{q} \\
	    \vdots & \ddots & \vdots\\
	    \alpha_1^{d-2} & \dots & \alpha_{q}^{d-2}
	   \end{pmatrix}
	   \begin{pmatrix}
	    \sum_{i \in I_1}m_i \\ \vdots \\ \sum_{i \in I_q}m_i
	   \end{pmatrix}.
 \end{align*}
 Hence by applying Lemma~\ref{lem3.5} in the case $\ell' = \ell$ and $m'_u = \sum_{i \in I_u}m_i$ for $1 \leq u \leq q$, 
 we have the equality
 \[
    \sum_{u=1}^q A_{d-1}^{d'_u+1}\left( \alpha_u \right) 
	\begin{pmatrix} \sum_{i \in I_u}m_i \\ m_{u,1} \\ \vdots \\ m_{u, d'_u} \end{pmatrix}
    = 0
 \]
 for some $m_{u, k} \in \mathbb{C}$ with $1 \leq u \leq q$ and $1 \leq k \leq d'_u$.
 Since $\sum_{u=1}^q \left( d'_u + 1 \right) = d - \ell + q \leq d-1$,
 the invertibility of
 the square matrix
 $\left( A_{d-\ell+q}^{d'_1+1}\left(\alpha_1\right), \dots, A_{d-\ell+q}^{d'_q+1}\left(\alpha_q\right) \right)$
 implies 
 $^t\!\left( \sum_{i \in I_u}m_i, m_{u,1}, \dots, m_{u, d'_u} \right) = ^t\!(0,0,\dots,0)$
 for every $1 \leq u \leq q$.
 We therefore have $\mathbb{I}(\zeta) \in \mathfrak{I}'(m)$
 and also have $\zeta \in \widetilde{E}\left(\mathbb{I}(\zeta)\right)$,
 which completes the proof of
 $\widetilde{B}(m) \subseteq \bigcup_{\mathbb{I} \in \mathfrak{I}'(m)} \widetilde{E}(\mathbb{I})$.
\end{proof}

By Propositions~\ref{prop3.10}(2) and~\ref{prop3.12}, we have the following:

\begin{proposition}\label{prop3.13}
 The subsets $\widetilde{T}(m), \widetilde{S}(m), \widetilde{B}(m)$ and
 $\widetilde{E}(\mathbb{I})$ for $\mathbb{I} \in \mathfrak{I}'(m)$
 are invariant under the action of $\mathrm{Aut}(\mathbb{C})$ on $\mathbb{C}^{\ell}$.
\end{proposition}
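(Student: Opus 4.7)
The plan is to deduce all four invariance statements from the two results cited in the hint together with one elementary observation, namely that each $\widetilde{E}(\mathbb{I})$ is manifestly $\mathrm{Aut}(\mathbb{C})$-invariant. I would organize the argument as a short chain of inclusions.

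First I would handle $\widetilde{E}(\mathbb{I})$ directly. Fix $\mathbb{I} \in \mathfrak{I}'(m)$ and $\gamma \in \mathrm{Aut}(\mathbb{C})$. For $\zeta = (\zeta_1,\dots,\zeta_\ell) \in \widetilde{E}(\mathbb{I})$ and any $I \in \mathbb{I}$ with $i,j \in I$, one has $\zeta_i = \zeta_j$, and since $\gamma$ is a well-defined map this forces $\gamma(\zeta_i) = \gamma(\zeta_j)$. Hence $\gamma \cdot \zeta \in \widetilde{E}(\mathbb{I})$, which proves invariance of $\widetilde{E}(\mathbb{I})$.

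Next I would combine this with Proposition~\ref{prop3.12}, which gives
\[
\widetilde{B}(m) \;=\; \bigcup_{\mathbb{I} \in \mathfrak{I}'(m)} \widetilde{E}(\mathbb{I}).
\]
A union of $\mathrm{Aut}(\mathbb{C})$-invariant sets is invariant, so $\widetilde{B}(m)$ is invariant. Invariance of $\widetilde{S}(m)$ is exactly the content of Proposition~\ref{prop3.10}(2). Finally, the definition $\widetilde{B}(m) := \widetilde{T}(m) \setminus \widetilde{S}(m)$ yields the disjoint decomposition $\widetilde{T}(m) = \widetilde{S}(m) \cup \widetilde{B}(m)$, so $\widetilde{T}(m)$ is invariant as well.

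There is no real obstacle in this proof; the only conceptual point is noticing that the invariance of the $\widetilde{E}(\mathbb{I})$ follows from $\gamma$ being a function, after which Proposition~\ref{prop3.12} converts the nontrivial analytic set $\widetilde{B}(m)$ into a union of manifestly invariant loci. The proof should fit comfortably in a few lines.
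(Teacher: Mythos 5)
Your argument is correct and follows the same route as the paper: establish the trivial invariance of each $\widetilde{E}(\mathbb{I})$, deduce invariance of $\widetilde{B}(m)$ from Proposition~\ref{prop3.12}, cite Proposition~\ref{prop3.10}(2) for $\widetilde{S}(m)$, and conclude for $\widetilde{T}(m)$ via $\widetilde{T}(m) = \widetilde{S}(m) \amalg \widetilde{B}(m)$.
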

\begin{proof}
 For each $\mathbb{I} \in \mathfrak{I}'(m)$, the subset $\widetilde{E}(\mathbb{I})$ is trivially invariant 
 under the action of $\mathrm{Aut}(\mathbb{C})$ on $\mathbb{C}^{\ell}$,
 which implies the invariance of $\widetilde{B}(m)$ by Proposition~\ref{prop3.12}.
 Moreover $\widetilde{S}(m)$ and $\widetilde{T}(m)$ are also invariant 
 under the action of $\mathrm{Aut}(\mathbb{C})$ on $\mathbb{C}^{\ell}$
 by Proposition~\ref{prop3.10}(2) and $\widetilde{T}(m) = \widetilde{S}(m) \amalg \widetilde{B}(m)$.
\end{proof}


We denote by $\mathbb{P}^{\ell-2}$ the complex projective space of dimension $\ell-2$.
Note that in the case $\ell=2$, the $0$-dimensional projective space $\mathbb{P}^{\ell-2} = \mathbb{P}^{0}$ consists of one point.
We naturally have the isomorphism
\[
    \left( \mathbb{C}^{\ell} / \mathrm{Aut}(\mathbb{C}) \right) \setminus \left\{\overline{0}\right\}
	\cong \mathbb{P}^{\ell - 2}
    \quad \text{by} \quad 
	\overline{(\zeta_1,\dots,\zeta_{\ell-1},\zeta_{\ell})}
	 \mapsto (\zeta_1-\zeta_{\ell}:\dots: \zeta_{\ell-1}-\zeta_{\ell}),
\]
where $\overline{0} := \left\{ (\zeta, \dots, \zeta) \in \mathbb{C}^{\ell} \mid \zeta \in \mathbb{C} \right\}$
denotes the equivalence class of $0 \in \mathbb{C}^{\ell}$ 
under the action of $\mathrm{Aut}(\mathbb{C})$ on $\mathbb{C}^{\ell}$.
Note that we always have $\mathbb{I}_0 := \{\{1, \dots, \ell\}\} \in \mathfrak{I}'(m)$
and $\widetilde{E}(\mathbb{I}_0) = \overline{0}$.

\begin{definition}
 We put
 \begin{align*}
  \begin{pmatrix} \psi_1(\zeta) \\ \vdots \\ \psi_{\ell-2}(\zeta) \end{pmatrix}
  &:= A_{\ell-2}^{d-2}(0) \left( N_{d-2} \right)^{d_{\ell} - 1}
		\left(\prod_{i=1}^{\ell-1}\left( -\zeta_i I_{d-2} +N_{d-2} \right)^{d_i - 1}\right)\\
	&\hspace{30pt}
	    \cdot \left(X_{d-2}\right)^{-1}
	   \begin{pmatrix}
	    \zeta_1&  \dots & \zeta_{\ell-1} \\
	    \vdots & \ddots & \vdots\\
	    \zeta_1^{d-2} & \dots & \zeta_{\ell-1}^{d-2}
	   \end{pmatrix}
	   \begin{pmatrix}
	    m_1 \\ \vdots \\ m_{\ell-1}
	   \end{pmatrix}
	\quad \text{ for } \ \zeta = (\zeta_1, \dots, \zeta_{\ell-1}) \in \mathbb{C}^{\ell - 1}, \\
  T(m) &:= \left\{\left. (\zeta_1: \dots: \zeta_{\ell-1}) \in \mathbb{P}^{\ell-2} \ \right| \ 
	\psi_1(\zeta) = \dots = \psi_{\ell - 2}(\zeta) = 0
	\right\},\\
  S(m) &:= \left\{\left. (\zeta_1: \dots: \zeta_{\ell-1}) \in T(m) \ \right| \ 
	\zeta_1, \dots, \zeta_{\ell-1} \text{ and } 0 \text{ are mutually distinct}\right\},\\
  B(m) &:= T(m) \setminus S(m) \qquad \text{and}\\
  \mathfrak{I}(m) &:= \left\{ \left\{I_1,\ldots,I_q\right\}\ \left| \ 
	 \begin{matrix}
	   q \ge 2,\ \ \emptyset \ne I_u \subset \{1,\ldots, \ell \} \textrm{ for every } 1\le u\le q,\\
	   I_1 \amalg \cdots \amalg I_q = \{1,\ldots, \ell \},\\
	   \sum_{i \in I_u} m_i = 0 \textrm{ for every } 1\le u\le q
	 \end{matrix}
	 \right.\right\}.
 \end{align*}
 Moreover we put
 \begin{align*}
   E(\mathbb{I}) &:= \left\{\left. (\zeta_1: \dots: \zeta_{\ell-1}) \in \mathbb{P}^{\ell-2} \ \right| \
	i,j \in I \in \mathbb{I} \Rightarrow \zeta_i = \zeta_j, \text{ where } \zeta_{\ell}=0 \right\} 
	\text{ for } \mathbb{I} \in \mathfrak{I}(m).
 \end{align*}
\end{definition}

Note that $^t\!\left( \psi_1(\zeta), \dots, \psi_{\ell-2}(\zeta) \right)$ is obtained
from the left-hand side of the equality~(\ref{eq3.6}) by substituting $\zeta_{\ell} = 0$.
Also 
note that $\mathfrak{I}(m)$ is obtained from $\mathfrak{I}'(m)$
by excluding exactly one element $\mathbb{I}_0 = \{\{1, \dots, \ell\}\} \in \mathfrak{I}'(m)$.
Moreover for every $\mathbb{I} \in \mathfrak{I}(m)$, we have $\# \mathbb{I} \geq 2$,
which implies $E(\mathbb{I}) \ne \emptyset$.
Hence
under the isomorphism
$\left( \mathbb{C}^{\ell} / \mathrm{Aut}(\mathbb{C}) \right) \setminus \left\{\overline{0}\right\}
	\cong \mathbb{P}^{\ell - 2}$, we clearly have
\begin{align*}
 &\left( \widetilde{T}(m) / \mathrm{Aut}(\mathbb{C}) \right) \setminus \left\{\overline{0}\right\}
	\cong T(m), \quad
 \widetilde{S}(m) / \mathrm{Aut}(\mathbb{C}) \cong S(m), \quad
 \left( \widetilde{B}(m) / \mathrm{Aut}(\mathbb{C}) \right) \setminus \left\{\overline{0}\right\}
	\cong B(m),\\
  &\left( \widetilde{E}(\mathbb{I})/\mathrm{Aut}(\mathbb{C}) \right) \setminus \left\{\overline{0}\right\}
	\cong E(\mathbb{I}) \ \text{ for } \ \mathbb{I} \in \mathfrak{I}(m)
 \quad \text{ and } \quad
 B(m) =  \bigcup_{\mathbb{I} \in \mathfrak{I}(m)} E(\mathbb{I})
\end{align*}
by Propositions~\ref{prop3.12} and~\ref{prop3.13}.

\begin{proposition}\label{prop3.15}
 $\psi_k(\zeta)$ is a homogeneous polynomial of $\zeta_1,\dots, \zeta_{\ell-1}$
 with degree $d-\ell + k$
  for $1 \leq k \leq \ell-2$.
\end{proposition}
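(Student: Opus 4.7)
The plan is to introduce a graded structure on column vectors of size $d-2$ and track how each factor in the formula for $\psi_k(\zeta)$ transforms this grading. Specifically, I will say that a column vector $v = \,^t(v_1,\dots,v_{d-2})$ with entries in $\mathbb{C}[\zeta_1,\dots,\zeta_{\ell-1}]$ has \emph{weight shift $c$} if, for each $1 \le j \le d-2$, the entry $v_j$ is either zero or homogeneous of degree $j+c$. The claim will follow once I show that the product of the various matrices shifts the weight by exactly $\sum_{i=1}^{\ell}(d_i-1) = d-\ell$, and then apply $A_{\ell-2}^{d-2}(0)$, which simply extracts the first $\ell-2$ rows.

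First I would compute the initial vector: the $j$-th entry of
$\bigl(\begin{smallmatrix}\zeta_1 & \cdots & \zeta_{\ell-1}\\ \vdots & \ddots & \vdots \\ \zeta_1^{d-2} & \cdots & \zeta_{\ell-1}^{d-2}\end{smallmatrix}\bigr)\,^t(m_1,\dots,m_{\ell-1})$ is $\sum_{i=1}^{\ell-1} m_i \zeta_i^j$, homogeneous of degree $j$. Since $(X_{d-2})^{-1}$ is a diagonal matrix with scalar entries, it preserves the weight shift; thus after multiplying by $(X_{d-2})^{-1}$ the resulting vector has weight shift $0$.

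Next I would verify the three key shift rules. First, $N_{d-2}$ satisfies $(N_{d-2}v)_p = v_{p+1}$, so it increases the weight shift by $1$; consequently $(N_{d-2})^{d_\ell-1}$ increases it by $d_\ell - 1$. Second, the $(p,q)$-entry of $(-\zeta_i I_{d-2}+N_{d-2})^{d_i-1}$ equals $\binom{d_i-1}{q-p}(-\zeta_i)^{d_i-1-(q-p)}$ when $0 \le q-p \le d_i-1$, so if $v$ has weight shift $c$ then
\[
\bigl((-\zeta_i I_{d-2}+N_{d-2})^{d_i-1}v\bigr)_p = \sum_{q}\binom{d_i-1}{q-p}(-\zeta_i)^{d_i-1-(q-p)}v_q
\]
is a sum of monomials of degree $(d_i-1-(q-p)) + (q+c) = p + c + (d_i-1)$, so the weight shift becomes $c + (d_i-1)$. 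Third, $A_{\ell-2}^{d-2}(0) = (I_{\ell-2}\ |\ O)$ by direct inspection of the definition, so it picks out the top $\ell-2$ entries.

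Combining these shift rules, after applying $(X_{d-2})^{-1}$, then $\prod_{i=1}^{\ell-1}(-\zeta_i I_{d-2}+N_{d-2})^{d_i-1}$, then $(N_{d-2})^{d_\ell-1}$, the resulting vector has weight shift $\sum_{i=1}^{\ell}(d_i-1) = d-\ell$. Extracting the first $\ell-2$ entries via $A_{\ell-2}^{d-2}(0)$ gives that $\psi_k(\zeta)$ is homogeneous of degree $k + (d-\ell) = d-\ell+k$, as claimed. The main (minor) obstacle is the bookkeeping in the middle step—verifying the weight shift for $(-\zeta_i I_{d-2}+N_{d-2})^{d_i-1}$—but once the entries are written down explicitly via the binomial expansion, the computation is mechanical.
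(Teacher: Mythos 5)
Your proof is correct and fills in the details that the paper compresses into ``Direct calculation easily verifies the proposition.'' Your weight-shift bookkeeping — scalar matrices preserve the grading, each factor $(-\zeta_i I_{d-2}+N_{d-2})^{d_i-1}$ shifts it by $d_i-1$, $(N_{d-2})^{d_\ell-1}$ shifts it by $d_\ell-1$, and $A_{\ell-2}^{d-2}(0)=(I_{\ell-2}\mid O)$ extracts the top $\ell-2$ entries — is exactly the direct computation the author has in mind, so this is the same approach, just made explicit.
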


\begin{proof}
Direct calculation easily verifies the proposition.
\end{proof}

By Proposition~\ref{prop3.11} and the isomorphism
$\widetilde{S}(m) / \mathrm{Aut}(\mathbb{C}) \cong S(m)$,
we have the following:


\begin{proposition}\label{prop3.16}
 Under the isomorphism $\widetilde{S}(m) / \mathrm{Aut}(\mathbb{C}) \cong S(m)$,
 the surjection
 $\widetilde{\pi}'(m) : \widetilde{S}(m) / \mathrm{Aut}(\mathbb{C}) \to \Phi_d(d_1,\dots,d_{\ell})^{-1}(\overline{m})$
 induces the surjection
 \[
   \pi(m) : S(m) \to \Phi_d(d_1,\dots,d_{\ell})^{-1}(\overline{m}).
 \]
 Moreover the action of $\mathfrak{S}(m)$ on $\widetilde{S}(m) / \mathrm{Aut}(\mathbb{C})$
 induces the action of $\mathfrak{S}(m)$ on $S(m)$, and
 the bijection
 $\overline{\widetilde{\pi}'(m)} : \left(\widetilde{S}(m) / \mathrm{Aut}(\mathbb{C})\right) / \mathfrak{S}(m)
 \cong \Phi_d(d_1,\dots,d_{\ell})^{-1}(\overline{m})$
 induces the bijection
 \[
   \overline{\pi(m)} : S(m)/\mathfrak{S}(m) \cong \Phi_d(d_1,\dots,d_{\ell})^{-1}(\overline{m}).
 \]
\end{proposition}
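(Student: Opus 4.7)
My plan is to treat this proposition entirely as transport of structure along the isomorphism $\widetilde{S}(m)/\mathrm{Aut}(\mathbb{C}) \cong S(m)$ already recorded in the text preceding it. First I would note that since $\ell \geq 2$ and every element of $\widetilde{S}(m)$ has mutually distinct coordinates, the class $\overline{0}$ of $(\zeta,\dots,\zeta)$ does not lie in $\widetilde{S}(m)/\mathrm{Aut}(\mathbb{C})$. Hence restricting the isomorphism $\left(\mathbb{C}^{\ell}/\mathrm{Aut}(\mathbb{C})\right)\setminus\{\overline{0}\} \cong \mathbb{P}^{\ell-2}$ loses no points and realizes the stated bijection onto $S(m)$. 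I would then define $\pi(m)$ to be the composite of the inverse isomorphism $S(m) \to \widetilde{S}(m)/\mathrm{Aut}(\mathbb{C})$ with $\widetilde{\pi}'(m)$; the surjectivity of $\pi(m)$ is immediate from that of $\widetilde{\pi}'(m)$ in Proposition~\ref{prop3.11}.

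Next I would transport the $\mathfrak{S}(m)$-action. The key input, noted in the paragraph preceding Proposition~\ref{prop3.11}, is that the permutation action of $\mathfrak{S}(m)$ on $\widetilde{S}(m)$ commutes with the diagonal $\mathrm{Aut}(\mathbb{C})$-action, so it descends to a well-defined action on $\widetilde{S}(m)/\mathrm{Aut}(\mathbb{C})$. Pulling this back along the isomorphism yields the action on $S(m)$. Explicitly, for $\sigma \in \mathfrak{S}(m)$ and $(\zeta_1 : \dots : \zeta_{\ell-1}) \in S(m)$, one sets $\zeta_{\ell} := 0$, forms the permuted tuple $(\zeta_{\sigma^{-1}(1)}, \dots, \zeta_{\sigma^{-1}(\ell)})$, and translates by $-\zeta_{\sigma^{-1}(\ell)}$ so that the last entry again vanishes.

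Finally, the bijection $\overline{\widetilde{\pi}'(m)}$ of Proposition~\ref{prop3.11} transports directly to the desired bijection $\overline{\pi(m)}$, because taking the $\mathfrak{S}(m)$-quotient commutes with transport along an $\mathfrak{S}(m)$-equivariant isomorphism. There is no serious technical obstacle in this proposition: the entire content is formal bookkeeping reconciling the linear model $\widetilde{S}(m)$ with its projective model $S(m)$. The only point that deserves a line of care is verifying that the explicit formula above for the $\mathfrak{S}(m)$-action on $S(m)$ is independent of the choice of representative, which amounts to unwinding the isomorphism $(\zeta_1, \dots, \zeta_{\ell}) \mapsto (\zeta_1 - \zeta_{\ell} : \dots : \zeta_{\ell-1} - \zeta_{\ell})$ and invoking translation invariance of coordinate differences.
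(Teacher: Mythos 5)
Your proposal is correct and matches the paper's (implicit) argument exactly: the paper states Proposition~\ref{prop3.16} with no proof text, treating it as an immediate transport of the surjection and $\mathfrak{S}(m)$-action of Proposition~\ref{prop3.11} along the isomorphism $\widetilde{S}(m)/\mathrm{Aut}(\mathbb{C}) \cong S(m)$ established in the preceding paragraph. Your explicit description of the induced $\mathfrak{S}(m)$-action on $S(m)$ (permute, then translate so the $\ell$-th coordinate is again zero) is a useful elaboration of that same bookkeeping, and is consistent with the isomorphism $\overline{(\zeta_1,\dots,\zeta_\ell)} \mapsto (\zeta_1-\zeta_\ell:\dots:\zeta_{\ell-1}-\zeta_\ell)$.
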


\begin{definition}\label{def3.18}
 We put
 \begin{align*}
  \Sigma(m) &:= \left\{ (\zeta_1: \dots: \zeta_{\ell}) \in \mathbb{P}^{\ell-1} \ \left| \ 
	 (\zeta_1, \dots, \zeta_{\ell}) \in \widetilde{S}(m) \quad \text{and} \quad 
	 d_1\zeta_1 + \dots + d_{\ell}\zeta_{\ell}=0
	\right.\right\} \quad \text{and}\\
  \widetilde{\Sigma}(m) &:= \left\{ (\zeta_1, \dots, \zeta_{\ell}) \in \widetilde{S}(m) \ \left| \ 
	\widetilde{\pi}(m)(\zeta_1, \dots, \zeta_{\ell}) \in {\rm MC}_d(d_1,\dots,d_\ell)
	\right.\right\},
 \end{align*}
 where $\widetilde{\pi}(m)$ is the map defined in Proposition~\ref{prop3.10}(1).
\end{definition}

Summing up the propositions above, we naturally have the following:

\begin{proposition}\label{prop3.19} \ 
 \begin{enumerate}
  \item The natural map $\Sigma(m) \to S(m)$ 
	defined by $(\zeta_1: \dots: \zeta_{\ell}) \mapsto (\zeta_1 - \zeta_{\ell}: \dots: \zeta_{\ell-1} - \zeta_{\ell})$
	is well-defined and bijective.
  \item The group $\mathfrak{S}(m)$ acts on $\Sigma(m)$ by the permutation of coordinates.
	Hence we have the bijection
	$\Sigma(m)/\mathfrak{S}(m) \cong S(m)/\mathfrak{S}(m) \cong \Phi_d(d_1,\dots,d_{\ell})^{-1}(\overline{m})$.
  \item The map $\widetilde{\Sigma}(m) \to \widehat{\Phi}_d(d_1,\dots,d_\ell)^{-1}(\overline{m})$
	defined by $(\zeta_1, \dots, \zeta_{\ell}) \mapsto f(z) = z + (z - \zeta_1)^{d_1} \cdots (z - \zeta_{\ell})^{d_{\ell}}$
	is well-defined and surjective.
  \item The group $\mathfrak{S}(m)$ acts on $\widetilde{\Sigma}(m)$ freely by the permutation of coordinates.
	Moreover we have the bijection 
	$\widetilde{\Sigma}(m)/\mathfrak{S}(m) \cong \widehat{\Phi}_d(d_1,\dots,d_\ell)^{-1}(\overline{m})$.
	Hence we have \\
	$\#\widehat{\Phi}_d(d_1,\dots,d_\ell)^{-1}(\overline{m}) = \#\widetilde{\Sigma}(m) / \#\mathfrak{S}(m)$.
  \item The natural projection $\widetilde{\Sigma}(m) \to \Sigma(m)$ defined by 
	$(\zeta_1, \dots, \zeta_{\ell}) \mapsto (\zeta_1: \dots: \zeta_{\ell})$ is well-defined and surjective.
	Moreover it is a $(d-1)$-to-one map.
	Hence we have $\# \widetilde{\Sigma}(m) = (d-1) \!\cdot\! \#\Sigma(m)$.
 \end{enumerate}
\end{proposition}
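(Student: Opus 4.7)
The plan is to combine the identifications already established in this section (Propositions~\ref{prop3.9},~\ref{prop3.10},~\ref{prop3.11},~\ref{prop3.13},~\ref{prop3.16}) with two elementary observations: (i) imposing $\sum d_i \zeta_i = 0$ selects a canonical translation representative in each $\mathrm{Aut}(\mathbb{C})$-translation-orbit, and (ii) under the conjugation by $\gamma(z) = cz$, the data $(\zeta_1,\dots,\zeta_\ell,\rho)$ describing $f(z) = z + \rho\prod(z-\zeta_i)^{d_i}$ transforms by $\zeta_i \mapsto c^{-1}\zeta_i$ and $\rho \mapsto c^{d-1}\rho$. Once these two facts are in hand, parts~(1)--(4) reduce to bookkeeping and part~(5) reduces to counting $(d-1)$-th roots.

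For part~(1), the coefficient of $z^{d-1}$ in $\prod_i(z-\zeta_i)^{d_i}$ equals $-\sum d_i\zeta_i$, so translation by $\frac{1}{d}\sum d_i\zeta_i$ is the unique translation killing this coefficient; hence each $\mathrm{Aut}(\mathbb{C})$-translation-orbit meeting $\widetilde{S}(m)$ has exactly one representative in the hyperplane $\sum d_i\zeta_i = 0$. The map $(\zeta_1:\dots:\zeta_\ell) \mapsto (\zeta_1-\zeta_\ell:\dots:\zeta_{\ell-1}-\zeta_\ell)$ is well-defined between projective spaces since the construction commutes with scaling, lands in $\mathbb{P}^{\ell-2}$ thanks to the mutual distinctness of the $\zeta_i$'s, and lands in $S(m)$ by Proposition~\ref{prop3.13} combined with the $\zeta_\ell=0$ substitution defining $\psi_k$. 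Its inverse sends a representative $(\eta_1,\dots,\eta_{\ell-1})$ to the tuple obtained by appending $0$ and then translating by $-\frac{1}{d}\sum_{j<\ell}d_j\eta_j$. Part~(2) is immediate since (\ref{eq3.6}) and $\sum d_i\zeta_i=0$ are both invariant under $\sigma \in \mathfrak{S}(m)$ (which preserves the pairs $(d_i,m_i)$), and the resulting bijections follow from Proposition~\ref{prop3.16}.

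For parts~(3) and~(4), an element of $\mathrm{MC}_d(d_1,\dots,d_\ell)$ is monic and centered, so it factors as $z + \prod(z-\zeta_i)^{d_i}$ with $\rho=1$ and $\sum d_i\zeta_i=0$; the map of part~(3) is thus well-defined, and surjectivity onto $\widehat{\Phi}_d(d_1,\dots,d_\ell)^{-1}(\overline{m})$ follows because every such $f$ admits such a factorization with $(\zeta_1,\dots,\zeta_\ell) \in \widetilde{\Sigma}(m)$. Two tuples yield the same polynomial exactly when they differ by a permutation in $\mathfrak{S}(m)$ (Proposition~\ref{prop3.10}(4)), and distinctness of the $\zeta_i$'s forces this action to be free.

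The main step is part~(5). Given a class $(\zeta_1:\dots:\zeta_\ell) \in \Sigma(m)$, pick any representative in $\widetilde{S}(m)$ with $\sum d_i\zeta_i=0$, and let $\rho$ be the unique nonzero scalar of Proposition~\ref{prop3.9}. A direct computation gives $\gamma^{-1}\circ f \circ \gamma(z) = z + c^{d-1}\rho \prod_{i=1}^{\ell}(z - c^{-1}\zeta_i)^{d_i}$ for $\gamma(z) = cz$, so the scaled representative $(c^{-1}\zeta_1,\dots,c^{-1}\zeta_\ell)$ still lies in $\widetilde{S}(m)$, still satisfies $\sum d_i(c^{-1}\zeta_i)=0$, and carries the scalar $c^{d-1}\rho$. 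It belongs to $\widetilde{\Sigma}(m)$ precisely when $c^{d-1} = 1/\rho$, an equation with exactly $d-1$ solutions in $\mathbb{C}^*$. All these scaled tuples project to the same class in $\Sigma(m)$, and conversely every preimage arises this way, so the projection is surjective and exactly $(d-1)$-to-one; combining with part~(4) yields the cardinality identity. The only ingredient beyond previous propositions is the conjugation formula $\rho \mapsto c^{d-1}\rho$, and this short algebraic verification is the main obstacle in the sense that every other claim follows from invariances already recorded.
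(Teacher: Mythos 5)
Your proposal is correct and follows essentially the same route as the paper's proof. The paper spells out only part~(5), translating by $b=\frac{1}{d}\sum d_i\zeta_i$ to land in the hyperplane $\sum d_i\zeta_i=0$ and then noting that scaling the roots by $a$ scales $\rho$ by $a^{-(d-1)}$, so membership in $\widetilde{\Sigma}(m)$ is equivalent to $\rho a^{-(d-1)}=1$, which has exactly $d-1$ solutions; parts~(1)--(4) are dismissed as ``almost obvious by the propositions already obtained.'' Your computation $\gamma^{-1}\circ f\circ\gamma(z)=z+c^{d-1}\rho\prod(z-c^{-1}\zeta_i)^{d_i}$ is the same transformation law written with $a=1/c$, and your fleshing out of parts~(1)--(4) via the coefficient-of-$z^{d-1}$ calculation, the $\mathfrak{S}(m)$-invariance, the monic-centered factorization, and freeness via distinctness of the $\zeta_i$ matches what the author intends by citing the earlier propositions.
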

\begin{proof}
 For every $(\zeta_1, \dots, \zeta_{\ell}) \in \widetilde{S}(m)$, there exists a unique non-zero complex number $\rho$
 such that $f(z) = z+\rho (z - \zeta_1)^{d_1} \cdots (z - \zeta_{\ell})^{d_{\ell}} \in {\rm Poly}_d(d_1,\dots,d_{\ell})$ holds
	and that the equalities $\iota(f,\zeta_i) = m_i$ hold for $1 \leq i \leq \ell$, by Proposition~\ref{prop3.9}.
	Putting $\zeta'_i = \zeta_i - b$ for $b := (d_1\zeta_1+\dots+d_{\ell}\zeta_{\ell})/d$,
	we still have $(\zeta'_1,\dots,\zeta'_{\ell}) \in \widetilde{S}(m)$ and also have $(\zeta'_1:\dots:\zeta'_{\ell}) \in \Sigma(m)$.
	Here, note that $z+\rho (z - \zeta'_1)^{d_1} \cdots (z - \zeta'_{\ell})^{d_{\ell}}$ and 
	$z+\rho a^{-(d-1)} (z - a\zeta'_1)^{d_1} \cdots (z - a\zeta'_{\ell})^{d_{\ell}}$
	are affinely conjugate for $a \in \mathbb{C} \setminus \{0\}$.
	It follows that $(a\zeta'_1,\dots, a\zeta'_{\ell})$ belongs to $\widetilde{\Sigma}(m)$ if and only if $\rho a^{-(d-1)}=1$.
	Since $\rho \ne 0$, we always have $\#\left\{ a \in \mathbb{C} \mid \rho a^{-(d-1)}=1 \right\} = d-1$, 
	which implies the assertion~(5).
	The rests are almost obvious by the propositions already obtained.
\end{proof}

\begin{proposition}\label{prop3.17}
 In the case $\ell \geq 3$,
 we have
 \[
    \left. \det \begin{pmatrix}
	\frac{\partial \psi_1}{\partial \zeta_1} & \dots & \frac{\partial \psi_1}{\partial \zeta_{\ell-2}} \\
	\vdots 							& \ddots & \vdots \\
	\frac{\partial \psi_{\ell-2}}{\partial \zeta_1} & \dots & \frac{\partial \psi_{\ell-2}}{\partial \zeta_{\ell-2}}
   \end{pmatrix} \right|_{\zeta_{\ell - 1} = 1}
   \ne 0
 \]
 for every $(\zeta_1: \dots: \zeta_{\ell-1}) \in S(m)$.
\end{proposition}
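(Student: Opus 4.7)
The plan is to argue by contradiction. Suppose a nonzero vector $v = (v_1, \ldots, v_{\ell-2}) \in \mathbb{C}^{\ell-2}$ lies in the kernel of the Jacobian at a point $\zeta^* = (\zeta_1^*, \ldots, \zeta_{\ell-1}^*) \in S(m)$ with $\zeta_{\ell-1}^* = 1$; I will produce a linear dependence among the columns of a square matrix whose determinant is pinned down by Proposition~\ref{prop2.4}, thereby forcing $v = 0$.

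First I write $\psi(\zeta) = \Pi(\zeta)\, X_{d-2}^{-1} V(\zeta)\, m'$, where $\Pi(\zeta) := A_{\ell-2}^{d-2}(0) \prod_{i=1}^{\ell}(-\zeta_i I_{d-2} + N_{d-2})^{d_i - 1}$ (with $\zeta_\ell := 0$), $V(\zeta)$ is the matrix with $(k, i)$-entry $\zeta_i^k$ for $1 \leq k \leq d-2$, $1 \leq i \leq \ell-1$, and $m' := (m_1, \ldots, m_{\ell-1})^t$. Since $\zeta^* \in S(m)$, Lemma~\ref{lem3.6} furnishes vectors $u_i^* \in \mathbb{C}^{d_i - 1}$ with $X_{d-2}^{-1} V(\zeta^*) m' = \sum_{i=1}^{\ell} A_{d-2}^{d_i - 1}(\zeta_i^*)\, u_i^*$, and the proof of Lemma~\ref{lem3.5} identifies them by $k\, u_{i,k}^* = -m_{i,k}^*$, where $m_{i,k}^*$ are the quantities supplied by Proposition~\ref{prop3.1}. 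Extending $v$ by $v_{\ell-1} = 0$, I differentiate $\psi$, combining the identity $\Pi(\zeta) \cdot A_{d-2}^{d_i - 1}(\zeta_i) = 0$ with the Proposition~\ref{prop2.3}-style equality $\partial_{\zeta_i} A_{d-2}^{d_i - 1}(\zeta_i) = A_{d-2}^{d_i, 1}(\zeta_i)\, X_{d_i - 1}$ to obtain
\[
d\psi(v) \;=\; \Pi(\zeta^*) \!\left[\,\sum_{j=1}^{\ell-2} v_j \Bigl( m_j\, A_{d-2}^{1}(\zeta_j^*) - A_{d-2}^{d_j, 1}(\zeta_j^*)\, X_{d_j - 1}\, u_j^* \Bigr) \right]\!.
\]

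If $d\psi(v) = 0$, the bracketed vector lies in $\ker \Pi(\zeta^*)$, which by Lemma~\ref{lem3.6} is spanned by the columns of $A_{d-2}^{d_i-1}(\zeta_i^*)$ for $1 \leq i \leq \ell$. Substituting $k u_{j,k}^* = -m_{j,k}^*$ and expanding in the columns $c_k^{(j)}$ of $A_{d-2}^{d_j}(\zeta_j^*)$, the resulting dependence involves precisely the columns of the $(d-2) \times (d-2)$ matrix
\[
M \;:=\; \bigl( A_{d-2}^{d_1}(\zeta_1^*),\, \ldots,\, A_{d-2}^{d_{\ell-2}}(\zeta_{\ell-2}^*),\, A_{d-2}^{d_{\ell-1} - 1}(\zeta_{\ell-1}^*),\, A_{d-2}^{d_\ell - 1}(0) \bigr).
\]
Applying Proposition~\ref{prop2.4} with $r = d-2$ and $(r_1, \ldots, r_\ell) = (d_1, \ldots, d_{\ell-2}, d_{\ell-1}-1, d_\ell - 1)$ shows $\det M = \prod_{u > v}(\alpha_u - \alpha_v)^{r_u r_v} \neq 0$, since $\zeta_1^*, \ldots, \zeta_{\ell-1}^*, 0$ are mutually distinct by the definition of $S(m)$. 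Consequently every coefficient in the dependence must vanish; reading off the coefficient of $c_{d_j}^{(j)}$ (when $d_j \geq 2$) gives $v_j m_{j, d_j - 1}^* = 0$, while reading the coefficient of $c_1^{(j)}$ (when $d_j = 1$) gives $v_j m_j = 0$, and in either case Proposition~\ref{prop3.2} ensures the scalar is nonzero, forcing $v_j = 0$. Thus $v = 0$, the required contradiction. The main obstacle is the bookkeeping step identifying $M$: the normalization $\zeta_\ell = 0$ combined with the \emph{absence} of the columns $c_{d_{\ell-1}}^{(\ell-1)}$ and $c_{d_\ell}^{(\ell)}$ from the dependence is exactly what makes the total column count equal $d-2$, so that Proposition~\ref{prop2.4} applies directly.
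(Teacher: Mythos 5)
Your proof is correct, and it takes a genuinely different route from the paper's. The paper directly computes each column of the Jacobian, showing (via two case-by-case lemmas for $d_i = 1$ and $d_i \geq 2$, the latter requiring a separate contradiction argument invoking Lemma~\ref{lem3.5} with $\ell' = \ell + 1$) that the $i$-th column equals a nonzero scalar $c_i$ times $(1, \zeta_i, \dots, \zeta_i^{\ell-3})^t$; the determinant is then a Vandermonde factor times $\prod c_i \neq 0$. You instead argue by contradiction: a nonzero kernel vector $v$ of the Jacobian is turned, via the decomposition of $X_{d-2}^{-1}V(\zeta^*)m'$ from Lemma~\ref{lem3.6} and the algebraic identity~(3.10) differentiated in $\zeta_j$, into a linear dependence among the $d-2$ columns of the block Vandermonde matrix $M$, whose determinant is nonzero by Proposition~\ref{prop2.4}. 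The ``extremal'' coefficient of the $d_j$-th column of $A_{d-2}^{d_j}(\zeta_j^*)$ is then $-v_j(d_j-1)u^*_{j,d_j-1}$ (or $v_j m_j$ when $d_j = 1$), which must vanish; Proposition~\ref{prop3.2} gives $u^*_{j,d_j-1} \neq 0$ (resp.\ $m_j \neq 0$), so $v_j = 0$. Your route avoids the paper's explicit computation of the proportionality constants and the separate treatment of $d_i = 1$ versus $d_i \geq 2$ at the derivative-evaluation level, folding everything into one invocation of Proposition~\ref{prop2.4}; the paper's route has the advantage of exhibiting the Jacobian determinant explicitly as a product. Two small points to tighten: (i) when invoking Proposition~\ref{prop2.4} for $M$ you should drop any degenerate block with $d_{\ell-1}-1=0$ or $d_\ell-1=0$, since the proposition as stated requires the $r_u$ positive (the conclusion is unaffected); (ii) you should cite equation~(3.10) explicitly when claiming $\Pi(\zeta)A_{d-2}^{d_i-1}(\zeta_i)=0$ may be differentiated as an algebraic identity, since Lemma~\ref{lem3.6} as stated assumes distinct $\alpha_u$ while (3.10) holds identically.
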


\begin{proof}
 For $1 \leq i \leq \ell - 2$, we have
 \begin{align*}
  \begin{pmatrix} \frac{\partial\psi_1}{\partial \zeta_i} \\ \vdots \\ \frac{\partial\psi_{\ell-2}}{\partial \zeta_i}
  \end{pmatrix}
   = \ &\frac{\partial}{\partial \zeta_i}\left\{
	A_{\ell-2}^{d-2}(0) \left( N_{d-2} \right)^{d_{\ell} - 1}
		\left(\prod_{j=1}^{\ell-1}\left( -\zeta_j I_{d-2} +N_{d-2} \right)^{d_j - 1}\right)
		\begin{matrix} \ \\ \ \\ \ \\ \ \end{matrix}\right. \\ 
	&\hspace*{170pt} \left.  \begin{matrix} \ \\ \ \\ \ \\ \ \end{matrix}
	    \cdot \left(X_{d-2}\right)^{-1}
	   \begin{pmatrix}
	    \zeta_1&  \dots & \zeta_{\ell-1} \\
	    \vdots & \ddots & \vdots\\
	    \zeta_1^{d-2} & \dots & \zeta_{\ell-1}^{d-2}
	   \end{pmatrix}
	   \begin{pmatrix}
	    m_1 \\ \vdots \\ m_{\ell-1}
	   \end{pmatrix}
	\right\}\\
  = \ &A_{\ell-2}^{d-2}(0) \left( N_{d-2} \right)^{d_{\ell} - 1}
		\left(\prod_{j=1}^{\ell-1}\left( -\zeta_j I_{d-2} +N_{d-2} \right)^{d_j - 1}\right) \left(X_{d-2}\right)^{-1}
	\begin{pmatrix} 1 \\ 2\zeta_i \\ \vdots \\ (d-2)\zeta_i^{d-3} \end{pmatrix} \left( m_i \right)\\
    &+ A_{\ell-2}^{d-2}(0) \left( N_{d-2} \right)^{d_{\ell} - 1}
		\left(\prod_{ 1 \leq j \leq \ell-1,\ j \ne i }
			\left( -\zeta_j I_{d-2} +N_{d-2} \right)^{d_j - 1}\right)\\
	& \hspace*{60pt}\cdot (d_i - 1) \left( -\zeta_i I_{d-2} +N_{d-2} \right)^{d_i - 2}
			\left\{  \frac{\partial}{\partial \zeta_i} \left( -\zeta_i I_{d-2} +N_{d-2} \right)\right\} \\
	& \hspace*{185pt} \cdot \left(X_{d-2}\right)^{-1}
	   \begin{pmatrix}
	    \zeta_1&  \dots & \zeta_{\ell-1} \\
	    \vdots & \ddots & \vdots\\
	    \zeta_1^{d-2} & \dots & \zeta_{\ell-1}^{d-2}
	   \end{pmatrix}
	   \begin{pmatrix}
	    m_1 \\ \vdots \\ m_{\ell-1}
	   \end{pmatrix}\\
  = \ &m_i A_{\ell-2}^{d-2}(0) \left( N_{d-2} \right)^{d_{\ell} - 1}
		\left(\prod_{j=1}^{\ell-1}\left( -\zeta_j I_{d-2} +N_{d-2} \right)^{d_j - 1}\right)
	\begin{pmatrix} 1 \\ \zeta_i \\ \vdots \\ \zeta_i^{d-3} \end{pmatrix}\\
    &- (d_i - 1)  A_{\ell-2}^{d-2}(0) \left( N_{d-2} \right)^{d_{\ell} - 1}
		\left(\prod_{ 1 \leq j \leq \ell-1,\ j \ne i }
			\left( -\zeta_j I_{d-2} +N_{d-2} \right)^{d_j - 1}\right)\\
	&\hspace*{77pt} \cdot \left( -\zeta_i I_{d-2} +N_{d-2} \right)^{d_i - 2}
		\left(X_{d-2}\right)^{-1}
	   \begin{pmatrix}
	    \zeta_1&  \dots & \zeta_{\ell-1} \\
	    \vdots & \ddots & \vdots\\
	    \zeta_1^{d-2} & \dots & \zeta_{\ell-1}^{d-2}
	   \end{pmatrix}
	   \begin{pmatrix}
	    m_1 \\ \vdots \\ m_{\ell-1}
	   \end{pmatrix}.
 \end{align*}
 Moreover since 
 \[
    \left( -\zeta_j I_{d-2} +N_{d-2} \right) \cdot \!\, ^t( 1, \zeta_i, \dots, \zeta_i^{k},*,\dots,* )
    =  \!\, ^t(\zeta_i-\zeta_j, (\zeta_i-\zeta_j)\zeta_i,\dots, (\zeta_i-\zeta_j)\zeta_i^{k-1},*,*\dots,*)
 \]
 for $1 \leq k \leq d-3$, we have
 \[
   A_{\ell-2}^{d-2}(0) \left( N_{d-2} \right)^{d_{\ell} - 1}
		\left(\prod_{j=1}^{\ell-1}\left( -\zeta_j I_{d-2} +N_{d-2} \right)^{d_j - 1}\right)
	\begin{pmatrix} 1 \\ \zeta_i \\ \vdots \\ \zeta_i^{d-3} \end{pmatrix}
  = \zeta_i^{d_\ell - 1}\prod_{j=1}^{\ell - 1}(\zeta_i- \zeta_j)^{d_j- 1}
	\begin{pmatrix} 1 \\ \zeta_i \\ \vdots \\ \zeta_i^{\ell-3} \end{pmatrix}.
 \]
 Hence we have
 \begin{lemma}\label{lem3.18}
  If $d_i = 1$, then 
 \[
   \begin{pmatrix} \frac{\partial\psi_1}{\partial \zeta_i} \\ \vdots \\ \frac{\partial\psi_{\ell-2}}{\partial \zeta_i}
   \end{pmatrix}
   = m_i \zeta_i^{d_\ell - 1} \left( \prod_{ 1 \leq j \leq \ell-1,\ j \ne i }(\zeta_i- \zeta_j)^{d_j- 1} \right)
	\begin{pmatrix} 1 \\ \zeta_i \\ \vdots \\ \zeta_i^{\ell-3} \end{pmatrix}.
 \]
 Moreover for $(\zeta_1: \dots: \zeta_{\ell-1}) \in S(m)$,
 we have $m_i \zeta_i^{d_\ell - 1}\prod_{ 1 \leq j \leq \ell-1,\ j \ne i }(\zeta_i- \zeta_j)^{d_j- 1} \ne 0$
 \end{lemma}
\begin{proof} 
 By the assertion~(2) in Proposition~\ref{prop3.2}, 
 the conditions $d_i=1$ and $(\zeta_1: \dots: \zeta_{\ell-1}) \in S(m)$ imply $m_i \ne 0$.
 The rest of the assertions are obvious by the argument above.
\end{proof}
 We consider the case $d_i \geq 2$ next.
 If $d_i \geq 2$, then we have $\prod_{j=1}^{\ell - 1}(\zeta_i- \zeta_j)^{d_j- 1} = 0$, which implies
 \begin{multline*}
   \begin{pmatrix} \frac{\partial\psi_1}{\partial \zeta_i} \\ \vdots \\ \frac{\partial\psi_{\ell-2}}{\partial \zeta_i}
   \end{pmatrix}
   = - (d_i - 1)  A_{\ell-2}^{d-2}(0) \left( N_{d-2} \right)^{d_{\ell} - 1}
		\left(\prod_{ 1 \leq j \leq \ell-1,\ j \ne i }
			\left( -\zeta_j I_{d-2} +N_{d-2} \right)^{d_j - 1}\right)\\
	 \cdot \left( -\zeta_i I_{d-2} +N_{d-2} \right)^{d_i - 2}
		\left(X_{d-2}\right)^{-1}
	   \begin{pmatrix}
	    \zeta_1&  \dots & \zeta_{\ell-1} \\
	    \vdots & \ddots & \vdots\\
	    \zeta_1^{d-2} & \dots & \zeta_{\ell-1}^{d-2}
	   \end{pmatrix}
	   \begin{pmatrix}
	    m_1 \\ \vdots \\ m_{\ell-1}
	   \end{pmatrix}.
 \end{multline*}
 We put
 \begin{multline*}
  \begin{pmatrix}
    \xi_1 \\ \vdots \\ \xi_{d-2}
  \end{pmatrix}
  :=    \left( N_{d-2} \right)^{d_{\ell} - 1}
		\left(\prod_{ 1 \leq j \leq \ell-1,\ j \ne i }
			\left( -\zeta_j I_{d-2} +N_{d-2} \right)^{d_j - 1}\right)\\
	 \cdot \left( -\zeta_i I_{d-2} +N_{d-2} \right)^{d_i - 2}
		\left(X_{d-2}\right)^{-1}
	   \begin{pmatrix}
	    \zeta_1&  \dots & \zeta_{\ell-1} \\
	    \vdots & \ddots & \vdots\\
	    \zeta_1^{d-2} & \dots & \zeta_{\ell-1}^{d-2}
	   \end{pmatrix}
	   \begin{pmatrix}
	    m_1 \\ \vdots \\ m_{\ell-1}
	   \end{pmatrix}.
 \end{multline*}
 Then for $(\zeta_1: \dots: \zeta_{\ell-1}) \in S(m)$,
 we have $^t(\psi_1(\zeta), \dots, \psi_{\ell-2}(\zeta)) =\!\, ^t(0,\dots,0)$, which is equivalent to
 \[
    A_{\ell-2}^{d-2}(0) \left( -\zeta_i I_{d-2} +N_{d-2} \right) \cdot\!\,
    ^t(\xi_1,\dots, \xi_{d-2}) = 0.
 \]
 Hence we have $\xi_k =\xi_1 \zeta_i^{k-1}$ for $1 \leq k \leq \ell - 1$, and therefore we have
 \begin{equation}\label{eq3.12}
    \begin{pmatrix} \frac{\partial\psi_1}{\partial \zeta_i} \\ \vdots \\ \frac{\partial\psi_{\ell-2}}{\partial \zeta_i}
   \end{pmatrix}
   =  - (d_i - 1) A_{\ell-2}^{d-2}(0) 
     \begin{pmatrix}
      \xi_1 \\ \vdots \\ \xi_{d-2}
     \end{pmatrix}
  = - (d_i - 1) \xi_1  \begin{pmatrix} 1 \\ \zeta_i \\ \vdots \\ \zeta_i^{\ell - 3} \end{pmatrix}.
 \end{equation}
 \begin{lemma}\label{lem3.19}
  In the equality~{\rm (\ref{eq3.12})}, we have $\xi_1 \ne 0$.
 \end{lemma}
 \begin{proof}
  Suppose that $\xi_1 = 0$ holds.
  Then we have 
  $\xi_k=0$ for $1 \leq k \leq \ell - 1$,
  which implies that the vector
  \[
     \left(X_{d-2}\right)^{-1}
	   \begin{pmatrix}
	    \zeta_1&  \dots & \zeta_{\ell-1} \\
	    \vdots & \ddots & \vdots\\
	    \zeta_1^{d-2} & \dots & \zeta_{\ell-1}^{d-2}
	   \end{pmatrix}
	   \begin{pmatrix}
	    m_1 \\ \vdots \\ m_{\ell-1}
	   \end{pmatrix}
  \]
  is contained in the kernel of the linear map 
  \[
     A_{\ell-1}^{d-2}(0) \left( N_{d-2} \right)^{d_{\ell} - 1}
		\left(\prod_{ 1 \leq j \leq \ell-1,\ j \ne i }
			\left( -\zeta_j I_{d-2} +N_{d-2} \right)^{d_j - 1}\right)
	  	\left( -\zeta_i I_{d-2} +N_{d-2} \right)^{d_i - 2}.
  \]
  Hence by applying Lemma~\ref{lem3.5} in the case 
  $\ell' = \ell + 1$, $q=\ell$, 
  $d'_j = d_j - 1$ for $j \in \{j \in \mathbb{Z} \mid 1 \leq j \leq \ell,\ j \ne i \}$,
  $d'_i = d_i - 2$,
  $\alpha_i = \zeta_i$ for $1 \leq i \leq \ell - 1$,
  $\alpha_\ell = 0$
  and $m'_i = m_i$ for $1 \leq i \leq \ell$,
  we have the equality
  \[
     \sum_{1 \leq j \leq \ell-1,\ j \ne i} A_{d-1}^{d_j}\left(\zeta_j\right) 
	\begin{pmatrix} m_j \\ m_{j,1} \\ \vdots \\ m_{j,d_j-1} \end{pmatrix}
	+ A_{d-1}^{d_\ell}\left(0\right) \begin{pmatrix} m_{\ell} \\ m_{\ell,1}\\ \vdots \\ m_{\ell,d_\ell-1} \end{pmatrix}
	+ A_{d-1}^{d_i-1}\left(\zeta_i\right) \begin{pmatrix} m_i \\ m_{i,1}\\ \vdots \\ m_{i,d_i-2} \end{pmatrix}
     = 0
  \]
  for some $m_{j,k} \in \mathbb{C}$ with
  \[
    (j,k) \in \left\{ (j,k) \in \mathbb{Z}^2 \ \left| \begin{matrix} 1 \leq j \leq \ell, \\
		j \ne i \Rightarrow 1\leq k \leq d_j - 1, \\
		j = i \Rightarrow 1 \leq k \leq d_i - 2
	\end{matrix} \right. \right\}.
  \]
  Since $\left( \sum_{1 \leq j \leq \ell, \ j \ne i} d_j \right) + (d_i - 1) = d - 1$,
  the square matrix 
  \[
    \left( A_{d-1}^{d_1}\left(\zeta_1\right), \dots, A_{d-1}^{d_i - 1}\left(\zeta_i\right) , \dots, 
    A_{d-1}^{d_{\ell- 1}}\left(\zeta_{\ell - 1}\right) , A_{d-1}^{d_\ell}\left(0\right)  \right)
    \]
  is invertible by Proposition~\ref{prop2.4}.
  We therefore have $(m_1, \dots, m_{\ell}) = (0,\dots, 0)$,
  which contradicts the assumption $(m_1, \dots, m_{\ell}) \in\mathbb{C}^\ell \setminus \{0\}$.
  Hence the contradiction assures $\xi_1 \ne 0$.
 \end{proof}

 By Lemmas~\ref{lem3.18} and~\ref{lem3.19}, we have
 \begin{lemma}\label{lem3.20}
  For every $(\zeta_1: \dots: \zeta_{\ell-1}) \in S(m)$ and for every $1 \leq i \leq \ell - 2$,
  there exists a non-zero complex number $c_i$ such that the equality
  \[
     \begin{pmatrix} \frac{\partial\psi_1}{\partial \zeta_i} \\ \vdots \\ \frac{\partial\psi_{\ell-2}}{\partial \zeta_i}
   \end{pmatrix}
   = c_i \begin{pmatrix} 1 \\ \zeta_i \\ \vdots \\ \zeta_i^{\ell - 3} \end{pmatrix}
  \]
  holds.
 \end{lemma}
 
 Hence by Lemma~\ref{lem3.20}, we have
 \[
	\det \begin{pmatrix}
	\frac{\partial \psi_1}{\partial \zeta_1} & \dots & \frac{\partial \psi_1}{\partial \zeta_{\ell-2}} \\
	\vdots 							& \ddots & \vdots \\
	\frac{\partial \psi_{\ell-2}}{\partial \zeta_1} & \dots & \frac{\partial \psi_{\ell-2}}{\partial \zeta_{\ell-2}}
   \end{pmatrix}
   = \prod_{i=1}^{\ell - 2} c_i \cdot \prod_{1 \leq i < j \leq \ell - 2}(\zeta_j - \zeta_i) \ne 0,
 \]
 which completes the proof of Proposition~\ref{prop3.17}.
\end{proof}

By Propositions~\ref{prop3.15} and~\ref{prop3.17},
we have the following:

\begin{proposition}\label{prop3.21}
 We always have $\# S(m) \leq \frac{(d-2)!}{(d-\ell)!}$.
 Moreover the equality $\# S(m) = \frac{(d-2)!}{(d-\ell)!}$ holds if and only if $\mathfrak{I}(m) = \emptyset$.
\end{proposition}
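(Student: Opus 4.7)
The plan is to apply B\'ezout's theorem to the $\ell-2$ hypersurfaces $V(\psi_1), \ldots, V(\psi_{\ell-2})$ in $\mathbb{P}^{\ell-2}$. By Proposition~\ref{prop3.15}, each $\psi_k$ is homogeneous of degree $d-\ell+k$, so the product of their degrees equals
\[
 \prod_{k=1}^{\ell-2}(d-\ell+k) = \frac{(d-2)!}{(d-\ell)!},
\]
which is precisely the target bound. The case $\ell=2$ is trivial since then $T(m)=\mathbb{P}^{0}$ is a single point and the bound is $1$, so I will assume $\ell\ge 3$ in what follows.

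The decisive input is Proposition~\ref{prop3.17}: at every point of $S(m)$ the Jacobian determinant of $(\psi_1,\ldots,\psi_{\ell-2})$ with respect to $(\zeta_1,\ldots,\zeta_{\ell-2})$ is nonzero in the affine chart $\zeta_{\ell-1}=1$. Hence each point of $S(m)$ is an isolated transverse zero of the system with local intersection multiplicity exactly $1$. On the other hand, by the projective version of Proposition~\ref{prop3.12} stated just before Proposition~\ref{prop3.15}, $B(m) = T(m)\setminus S(m) = \bigcup_{\mathbb{I}\in\mathfrak{I}(m)}E(\mathbb{I})$, and its contribution to the total intersection number of the $V(\psi_k)$'s is non-negative. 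Combining these, B\'ezout gives $\#S(m) \le \frac{(d-2)!}{(d-\ell)!}$.

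For the equivalence: when $\mathfrak{I}(m)=\emptyset$, $B(m)$ is empty, so $T(m)=S(m)$ is a finite reduced set of transverse intersection points and B\'ezout yields equality $\#S(m) = \frac{(d-2)!}{(d-\ell)!}$. Conversely, when $\mathfrak{I}(m)\ne \emptyset$, $B(m)$ is nonempty and I expect its contribution to the intersection number to be strictly positive, forcing the strict inequality $\#S(m) < \frac{(d-2)!}{(d-\ell)!}$.

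The hard part will be the case in which $T(m)$ contains components of positive dimension---such components must lie in $B(m)$ and can arise for special $m$ with many nontrivial partitions in $\mathfrak{I}(m)$. In that case the raw B\'ezout count does not directly translate to a naive point count. I would handle this by perturbing $\psi_{\ell-2}$ to a generic homogeneous polynomial of degree $d-2$ to obtain a proper, zero-dimensional intersection with the full B\'ezout count $\frac{(d-2)!}{(d-\ell)!}$. By the inverse function theorem the simple zeros of $S(m)$ persist under such a perturbation, while any positive-dimensional component of the original $T(m)$ must break up into at least one isolated zero outside a neighborhood of $S(m)$; this yields both the universal bound and the strict inequality whenever $\mathfrak{I}(m)\ne\emptyset$.
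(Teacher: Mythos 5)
Your overall route coincides with the paper's: apply B\'ezout in $\mathbb{P}^{\ell-2}$ to the forms $\psi_1,\dots,\psi_{\ell-2}$ whose degrees multiply to $\frac{(d-2)!}{(d-\ell)!}$, use Proposition~\ref{prop3.17} to see that every point of $S(m)$ is a transverse zero of local multiplicity one, and use the decomposition $B(m)=\bigcup_{\mathbb{I}\in\mathfrak{I}(m)}E(\mathbb{I})$ to convert the equality criterion into $\mathfrak{I}(m)=\emptyset$. At this point the paper simply invokes analogues of Propositions~6.2 and~6.6 in~\cite{sugi1}, whereas you attempt a self-contained perturbation argument; the structure is the same.

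The perturbation you describe, however, has a genuine gap. Replacing only $\psi_{\ell-2}$ by a generic form of degree $d-2$ cuts the dimension of $V(\psi_1,\dots,\psi_{\ell-3})$ by at most one; if that variety already has excess dimension $\geq 2$ (which can happen for special $m$ with many partitions in $\mathfrak{I}(m)$), the perturbed system is still positive-dimensional and you cannot read off a zero-dimensional length $\frac{(d-2)!}{(d-\ell)!}$. To make the deformation argument work you would have to perturb all $\ell-2$ forms simultaneously and then control how the perturbed zeros limit onto $T(m)$. A cleaner fix is to avoid perturbation entirely and invoke the refined B\'ezout inequality from Fulton's intersection theory, which bounds $\sum_Z i(Z)\deg Z$ over \emph{all} irreducible components $Z$ of $T(m)$ (isolated or not) by $\prod_k\deg\psi_k$; since each point of $S(m)$ contributes exactly $1$ and each nonempty component of $B(m)$ contributes at least $1$, this yields both the universal bound and strict inequality whenever $B(m)\ne\emptyset$ in one stroke. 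A small further omission: for $\ell=2$ you have $\#S(m)=1=\frac{(d-2)!}{(d-\ell)!}$ unconditionally, so to obtain the ``if and only if'' you must also verify that $\mathfrak{I}(m)=\emptyset$ is automatic, which follows from $m=(m_1,-m_1)$ with $m_1\ne 0$.
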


\begin{proof}
 First we consider the case $\ell=2$. In this case, the equality~(\ref{eq3.6}) always holds, 
 which implies $\widetilde{T}(m) = \mathbb{C}^2$ and 
 $\widetilde{S}(m) = \left\{\left. (\zeta_1,\zeta_2) \in \mathbb{C}^2 \ \right| \ \zeta_1 \ne \zeta_2 \right\}$.
 Hence we always have $T(m) = S(m) = \mathbb{P}^0$, and therefore have $\# S(m) = 1 = \frac{(d-2)!}{(d-\ell)!}$.
 On the other hand, since $m_1 + m_2 = 0$ and $(m_1,m_2) \ne (0,0)$, 
 we have $m = (m_1,-m_1)$ with $m_1 \in \mathbb{C} \setminus \{0\}$,
 which implies that $\mathfrak{I}(m) = \emptyset$ always holds.

 We consider the case $\ell \geq 3$ next.
 In this case, $S(m)$ is a discrete set
 by Proposition~\ref{prop3.17}.
 Moreover for every $\zeta \in S(m)$,
 the intersection multiplicity of $\psi_1(\zeta), \dots, \psi_{\ell - 2}(\zeta)$ at $\zeta$ is $1$.
 Hence by a similar argument to the proof of Proposition~6.2 in~\cite{sugi1},
 we have $\# S(m) \leq \prod_{k=1}^{\ell - 2}\deg \psi_k = \frac{(d-2)!}{(d-\ell)!}$.
 Moreover since $B(m) = T(m) \setminus S(m)$ and
 $B(m) = \bigcup_{\mathbb{I} \in \mathfrak{I}(m)} E(\mathbb{I})$,
 the equality $\# S(m) = \frac{(d-2)!}{(d-\ell)!}$ holds if and only if 
 $\mathfrak{I}(m) = \emptyset$ holds,
 which can also be obtained by a similar argument to the proof of Proposition~6.6 in~\cite{sugi1}.
\end{proof}

Based on the propositions above, we complete the proof of Main Theorem.

\begin{proof}[Proof of Main Theorem]\ 

 First we consider the case $(m_1,\dots,m_\ell) = 0$.
 In this case, we have 
 $\Phi_d(d_1,\dots,d_\ell)^{-1}(\overline{m}) = \widehat{\Phi}_d(d_1,\dots,d_\ell)^{-1}(\overline{m}) = \emptyset$
 by Proposition~\ref{prop3.3}.
 Moreover the condition~(2c) in Main Theorem is not satisfied.
 Hence in this case, Main Theorem holds.
 In the rest of the proof we assume $(m_1,\dots,m_\ell) \ne 0$.
 
 Note first that
 \begin{itemize}
  \item the condition $\sum_{i \in I}m_i \ne 0$ for every $\emptyset \ne I \subsetneq \{1,\dots,\ell\}$
	is equivalent to $\mathfrak{I}(m) = \emptyset$.
  \item $\left(d_1,m_1\right), \dots, \left(d_\ell,m_\ell\right)$ are mutually distinct if and only if $\#\mathfrak{S}(m) = 1$.
 \end{itemize}
 Hence the condition~(2c) in Main Theorem is equivalent 
 to ``$\mathfrak{I}(m) = \emptyset$ and $\#\mathfrak{S}(m) = 1$''.
 
 We consider $\# \widehat{\Phi}_d(d_1,\dots,d_\ell)^{-1}(\overline{m})$ first.
 By the assertions~(1),~(4) and~(5) in Proposition~\ref{prop3.19}, we always have 
 \[
   \#\widehat{\Phi}_d(d_1,\dots,d_\ell)^{-1}(\overline{m}) = \frac{(d-1) \!\cdot\! \#S(m)}{\#\mathfrak{S}(m)}.
 \]
 Hence by Proposition~\ref{prop3.21}, we have 
 \[
   \#\widehat{\Phi}_d(d_1,\dots,d_\ell)^{-1}(\overline{m}) \leq \frac{(d-1)!}{(d-\ell)!}.
 \]
 Moreover 
 the equality $\#\widehat{\Phi}_d(d_1,\dots,d_\ell)^{-1}(\overline{m}) = \frac{(d-1)!}{(d-\ell)!}$ holds
 if and only if $\# S(m) = \frac{(d-2)!}{(d-\ell)!}$ and $\#\mathfrak{S}(m) = 1$, 
 which is equivalent to the condition ``$\mathfrak{I}(m) = \emptyset$ and $\#\mathfrak{S}(m) = 1$''
 by Proposition~\ref{prop3.21}.
 Hence we have the implication (b) $\Leftrightarrow$ (c) in Main Theorem~(2).

 We consider $\# \Phi_d(d_1,\dots,d_\ell)^{-1}(\overline{m})$ next.
 By Propositions~\ref{prop3.16} and~\ref{prop3.21}, we have
 \[
     \Phi_d(d_1,\dots,d_{\ell})^{-1}(\overline{m}) \cong S(m)/\mathfrak{S}(m) \qquad \text{and} \qquad
     \# S(m) \leq \frac{(d-2)!}{(d-\ell)!},
 \]
 which implies the inequality
 \[
   \#\Phi_d(d_1,\dots,d_{\ell})^{-1}(\overline{m}) \leq \frac{(d-2)!}{(d-\ell)!}.
 \]
 On the other hand, 
 the isomorphism $\overline{p} : \mathrm{MC}_d / \left( \mathbb{Z} /(d-1)\mathbb{Z} \right) \cong \mathrm{MP}_d$
 implies the inequality 
 $\# \widehat{\Phi}_d(d_1,\dots,d_\ell)^{-1}(\overline{m}) /(d-1) \leq
 \# \Phi_d(d_1,\dots,d_\ell)^{-1}(\overline{m})$.
 Hence we have 
 \[
   \frac{\# \widehat{\Phi}_d(d_1,\dots,d_\ell)^{-1}(\overline{m})}{d-1} \leq 
   \#\Phi_d(d_1,\dots,d_{\ell})^{-1}(\overline{m}) \leq \frac{(d-2)!}{(d-\ell)!},
 \]
 which assures the implication (b) $\Rightarrow$ (a) in Main Theorem~(2).

 Last of all, we show the implication (a) $\Rightarrow$ (c) in Main Theorem~(2), except in the case $d = \ell = 3$.
 Since $\Phi_d(d_1,\dots,d_{\ell})^{-1}(\overline{m}) \cong S(m)/\mathfrak{S}(m)$ and $\# S(m) \leq \frac{(d-2)!}{(d-\ell)!}$,
 the condition $\#\Phi_d(d_1,\dots,d_{\ell})^{-1}(\overline{m}) = \frac{(d-2)!}{(d-\ell)!}$ is satisfied if and only if
 $\# S(m) = \frac{(d-2)!}{(d-\ell)!}$ holds, and the action of $\mathfrak{S}(m)$ on $S(m)$ is trivial.
 Moreover by Proposition~\ref{prop3.21}, $\# S(m) = \frac{(d-2)!}{(d-\ell)!}$ holds if and only if $\mathfrak{I}(m) = \emptyset$ holds.
 Hence the implication (a) $\Rightarrow$ (c) does not hold if and only if there exists $m$ such that Condition~($*$) is satisfied.
 \begin{description}
  \item[\hspace*{-24pt}Condition ($\bm{*}$) ] $\#S(m) = \frac{(d-2)!}{(d-\ell)!}$, $\#\mathfrak{S}(m) \geq 2$ and the action of $\mathfrak{S}(m)$ on $S(m)$ is trivial.
 \end{description}
 Here, note that if $\# \mathfrak{S}(m) \geq 2$, then there exist $1 \leq i < j \leq \ell$ such that $(i,j) \in \mathfrak{S}(m)$.
 
 In the case $\ell \geq 4$, we have $\sigma \cdot \zeta \ne \zeta$ for every $\sigma = (i,j) \in \mathfrak{S}(m)$ and $\zeta \in S(m)$.
 If $\ell = 2$, we always have $\#\mathfrak{S}(m)=1$.
 Hence in the case $\ell \ne 3$, Condition~($*$) is not satisfied for every $m$.

 We consider the case $\ell = 3$ next.
 In this case, we always have $\#\mathfrak{S}(m) \leq 2$.
 Hence under Condition~($*$), we have $\#\mathfrak{S}(m) = 2$,
 and may assume that $\mathfrak{S}(m)$ consists of the identity and $\sigma := (1,2)$.
 Under this assumption, the action of $\mathfrak{S}(m)$ on $S(m)$ is trivial if and only if
 $\sigma \cdot \zeta = \zeta$ holds for every $\zeta = (\zeta_1:\zeta_2) \in S(m)$.
 However for $\zeta = (\zeta_1:\zeta_2) \in S(m)$, the equality $\sigma \cdot \zeta = \zeta$ holds
 if and only if $(\zeta_2:\zeta_1) = (\zeta_1:\zeta_2)$ holds, 
 which is also equivalent to $\zeta = (1:-1)$.
 Hence $S(m)$ must be equal to $\{ (1:-1) \}$.

 Summing up the above mentioned, Condition~($*$)
 implies $\ell =3$ and $\#S(m) = \frac{(d-2)!}{(d-\ell)!} = 1$, which also implies $d=3$.
 Hence except in the case $d=\ell = 3$, Condition~($*$) is not satisfied for every $m$, 
 and the implication (a) $\Rightarrow$ (c) holds.
 
 To summarize, we have completed the proof of Main Theorem.
\end{proof}


\begin{thebibliography}{99}


 \bibitem{Fujimura2}
  M. Fujimura, \newblock
   Projective Moduli space for the polynomials.  \newblock
   \textit{Dyn. Contin. Discrete Impuls. Syst. Ser. A Math. Anal.}  13 (2006),  787--801.

 \bibitem{fu} 
  M. Fujimura, \newblock
   The Moduli Space of Rational Maps
   and Surjectivity of Multiplier Representation. \newblock
  \textit{Comput. Methods Funct. Theory.}   7  (2007),  no. 2,  345--360.

 \bibitem{tani}
  M. Fujimura and M. Taniguchi, \newblock
   A compactification of the moduli space of polynomials. \newblock
   \textit{Proc. Amer. Math. Soc.}   136  (2008),
	 no. 10,  3601--3609.

 \bibitem{Gorbovickis1}
  I. Gorbovickis, \newblock
   Algebraic Independence of Multipliers of Periodic Orbits in the Space of Rational Maps of the Riemann Sphere. \newblock
   \textit{Mosc. Math. J.}   15  (2015),  no. 1,  73-87.

 \bibitem{Gorbovickis2}
  I. Gorbovickis, \newblock
   Algebraic independence of multipliers of periodic orbits in the space of polynomial maps of one variable. \newblock
   \textit{Ergodic Theory Dynam. Systems}   36  (2016),  no. 4,  1156--1166.

%
 \bibitem{HutzTepper}
  B. Hutz and M. Tepper, \newblock
   Multiplier Spectra and the Moduli Space of Degree 3 Morphisms on $\mathbb{P}^1$. \newblock
   \textit{JP Journal of Algebra, Number Theory and Applications}   29  (2013),  no. 2,  189--206

 \bibitem{Mc}
  C. McMullen, \newblock
  Families of rational maps and iterative root-finding algorithms. \newblock
  \textit{ Ann. of Math. (2)}  125  (1987),  no. 3, 467--493. 

 \bibitem{mi_cub}
  J. Milnor, \newblock
  Remarks on iterated cubic maps. \newblock
  \textit{Exp. Math.}  1  (1992),  no. 1,  5--24.

 \bibitem{mi_qua}
  J. Milnor, \newblock
  Geometry and dynamics of quadratic rational maps. \newblock
  \textit{ Exp. Math.}  2  (1993),  no. 1, 37--83.

 \bibitem{mi_book}
  J. Milnor, \newblock
  \textit{Dynamics in one complex variable.  Third edition.} \newblock
  Ann. of Math. Stud.  160, \newblock
  2006. \newblock

%
%
 \bibitem{NishizawaFujimura}
  K. Nishizawa and M. Fujimura, \newblock
  Moduli space of polynomial maps with degree four. \newblock
  \textit{Josai Information Sciences Researchers} 9 (1997),  1--10.


 \bibitem{sugi1}
  T. Sugiyama, \newblock
  The moduli space of polynomial maps and their fixed-point multipliers. \newblock
  \textit{Adv. Math.} 322  (2017),  132--185

 \bibitem{sugi2}
  T. Sugiyama, \newblock
  The moduli space of polynomial maps and their fixed-point multipliers: II.
  Improvement to the algorithm and monic centered polynomials. \newblock
  \textit{submitted} (arXiv:1802.07474)
\end{thebibliography}
\end{document}